\documentclass[11pt,reqno]{amsart}
\usepackage[a4paper,margin=1.08in]{geometry}
\usepackage[T1]{fontenc}
\usepackage{lmodern}
\usepackage{amssymb,mathtools}
\usepackage{microtype}
\usepackage{booktabs,tabularx}
\usepackage{enumitem}
\usepackage[hidelinks]{hyperref}

\newtheorem{theorem}{Theorem}[section]
\newtheorem{lemma}[theorem]{Lemma}
\newtheorem{corollary}[theorem]{Corollary}
\newtheorem{conjecture}[theorem]{Conjecture}
\theoremstyle{remark}

\numberwithin{equation}{section}

\DeclareMathOperator{\rank}{rank}

\setlist[enumerate]{label=\textup{(\roman*)},leftmargin=2.2em,itemsep=2pt,topsep=4pt}
\hypersetup{pdftitle={On additive bases of finite abelian groups},
  pdfauthor={Dongchun Han and Hanbin Zhang}}

\title[On additive bases of finite abelian groups]{On additive bases of finite abelian groups}

\author{Weidong Gao}
\address{Center for Applied Mathematics, Tianjin University, Center for Applied Mathematics and KL-AAGDM, Tianjin University, Tianjin
300072, P.R. China}
\email{wdgao@nankai.edu.cn}

\author{Hanbin Zhang}
\address{School of Mathematics (Zhuhai), Sun Yat-sen University, Zhuhai 519082, Guangdong, P.R. China}
\email{zhanghb68@mail.sysu.edu.cn}

\subjclass[2020]{Primary 11B75; Secondary 11P70, 20K01}
\keywords{Additive basis, regular sequence, finite abelian group, Kneser's theorem}
\date{}

\begin{document}

\begin{abstract}
Let $G$ be a finite abelian group, let $p$ be the smallest prime divisor of $|G|$, and let
$c_0(G)$ be the least integer $t$ such that every regular sequence over $G$ of length at least $t$ is an
additive basis. Additive bases have been extensively studied in additive combinatorics, and
the exact value of $c_0(G)$ has been determined for many groups of rank at least three and some
groups of rank two (in which case groups of the form $C_p\oplus C_{pn}$ turn out to be exceptional). In
this paper, we continue the study of $c_0(G)$ and obtain the following results.
\begin{enumerate}[label=\textup{(\arabic*)},leftmargin=2em]
\item Together with known results, we prove that $c_0(G)=|G|/p+p-2$ for all abelian groups $G$
with $\rank(G)\geq 3$.
\item For distinct odd primes $p<q$, we determine
\[
c_0(C_p\oplus C_{pq})=pq+2p-3,\qquad
c_0(C_q\oplus C_{pq})=q^2+p-2.
\]
\item Let $p$ be an odd prime, let $2\leq a\leq b$, and let $n$ be a positive integer with no prime divisor
smaller than $p$. Then
\[
c_0(C_{p^a}\oplus C_{p^bn})=p^{a+b-1}n+p-2.
\]
\item For every odd integer $n\geq 3$, we prove
\[
c_0(C_3\oplus C_{3n})=3n+3.
\]
\end{enumerate}
Therefore, $c_0(G)$ is determined except possibly for groups of the form $C_p\oplus C_{pn}$ with $p\geq 5$.
\end{abstract}

\maketitle

\section{Introduction}

Let $G$ be a nontrivial finite abelian group, written additively. Let $S=g_1\cdot\ldots\cdot g_k$ be a sequence over $G$ (unordered,  repetition is allowed, and $g_i\in G$ for $1\le i\le k$) and let $\sigma(S)=\sum^k_{i=1}g_i$. We denote by $T\mid S$ for a subsequence $T$ of $S$. We use
\[
\Sigma(S)=\{\sigma(T):1\neq T\mid S\},\qquad
\Sigma_0(S)=\Sigma(S)\cup\{0\},
\]
where $1$ denotes the empty sequence. The sequence $S$ is an additive basis of $G$ if $\Sigma(S)=G$.
The critical number $c(G)$ is defined as the smallest integer $t$ such that every subset of $G\setminus\{0\}$
of cardinality at least $t$ is an additive basis of $G$. In 1964, Erd\H{o}s and Heilbronn proposed the
problem of determining $c(G)$, and it was completely determined by 2009 through many authors'
efforts; see~\cite{FreezeGaoGeroldinger} and the references therein, and we also refer to \cite{HanRen} for an important related progress. The additive basis problem has also recently been
considered for finite, not necessarily abelian, groups by Qu and Gao~\cite{QuGao}.

For a subgroup $H\leq G$, let $S_H$ be the subsequence consisting of the terms of $S$ which lie in $H$.
We call $S$ a regular sequence if
\[
|S_H|\leq |H|-1\qquad\text{for every proper subgroup }H<G.
\]
Let
\[
c_0(G)=\min\{t:\text{every regular sequence $S$ over $G$ with $|S|\geq t$ satisfies $\Sigma(S)=G$}\}.
\]
Regularity is the natural replacement for the condition that a set is not concentrated in a proper
subgroup. It is also useful for sequences with repeated terms: if a sequence is not regular, then
redundant terms can be removed until one obtains a regular subsequence with the same set $\Sigma_0$;
see Lemma~\ref{lem:2.2}.

The problem of determining $c_0(G)$ was first proposed by Olson. He conjectured that $c_0(C_p\oplus
C_p)=2p-1$, which was proved and further studied by Peng \cite{PengI,PengII} in 1987.

Let $p$ be the smallest prime divisor of $|G|$. Define
\begin{equation}\label{eq:m}
m(G)=
\begin{cases}
|G|,&G\text{ is cyclic},\\
2p-1,&G\cong C_p\oplus C_p,\\
pn+2p-3,&G\cong C_p\oplus C_{pn}\text{ with }n\geq 2,\\
|G|/p+p-2,&\text{otherwise}.
\end{cases}
\end{equation}
The standard examples give $c_0(G)\geq m(G)$; see Lemma~\ref{lem:2.3}. The following conjecture originates
in~\cite{GHHQZ}, with the boundary value proposed in~\cite{QuHan2017}; see also~\cite{GQZ,GLQW}.

\begin{conjecture}\label{conj:1.1}
For every nontrivial finite abelian group $G$,
\[
c_0(G)=m(G).
\]
\end{conjecture}

The third case of~\eqref{eq:m} is exceptional. In the other noncyclic cases the usual construction is
obtained from a subgroup of index $p$ and has length $|G|/p+p-3$. For $G=C_p\oplus C_{pn}$ with $n\geq 2$
there is a regular sequence of length $pn+2p-4$, which is not an additive basis. The equality
\(
|G|/p=\exp(G)=pn
\)
explains why this family requires a separate argument. 

We first record the known results which will be used below. This also gives a precise account
of the status of Conjecture~\ref{conj:1.1} before presenting our works.

Peng proved
\begin{equation}\label{eq:peng}
c_0(C_p\oplus C_p)=2p-1,\qquad
c_0(C_p^r)=p^{r-1}+p-2\quad(r\geq 3)
\end{equation}
for every prime $p$~\cite{PengI,PengII}. Gao and Hamidoune, and later Gao, et al.,
determined the cyclic groups, the groups of even order, all groups of rank at least five, and
all $p$-groups except at that time for the boundary groups $C_p\oplus C_{p^a}$~\cite{GaoHamidoune,GHHQZ}. Qu and Han also
established an inverse theorem for additive bases~\cite{QuHan2016}, and subsequently proved
\begin{equation}\label{eq:primepower}
c_0(C_p\oplus C_{p^a})=p^a+2p-3\quad(a\geq 2)
\end{equation}
\cite{QuHan2017}. Thus Conjecture~\ref{conj:1.1} is known for every finite abelian $p$-group. For related inverse results
on regular sequences over $C_p\oplus C_p$, see Peng, Qu, and Li~\cite{PengQuLi}.

For groups of higher rank, Gao, Qu, and Zhang proved the following additional cases~\cite{GQZ}:
\begin{enumerate}
\item $\rank(G)=4$, apart from the groups $C_3^3\oplus C_n$ with $3\mid n$, $n$ odd, not a power of $3$, and
$|G|<3.72\times 10^7$;
\item $\rank(G)=3$ and either the smallest prime divisor $p$ satisfies $p\geq 11$, or $3\leq p\leq 7$ and
$|G|\geq 3.72\times 10^7$.
\end{enumerate}
The principal known rank-two results are as follows. Let $G=C_{n_1}\oplus C_{n_2}$ with $1<n_1\mid n_2$,
and let $p$ be the smallest prime divisor of $|G|$.
\begin{enumerate}
\item If $n_1\geq 2p$ and $|G|\geq 72p^6$, then
\[
c_0(G)=|G|/p+p-2
\]
by Gao, Li, Qu, and Wang~\cite{GLQW}.
\item If $p\geq 11$ and $n_1\geq p^2$, then the same equality holds; Qu, Li, Wang, and Zhao also describe
the regular nonbases of length $c_0(G)-1$~\cite{QLWZ}.
\item For every prime $q\geq 5$,
\[
c_0(C_3\oplus C_{3q})=3q+3
\]
by Qu and Li~\cite{QuLi}.
\end{enumerate}
Very recently, Yang obtained general upper bounds for $c_0(C_q\oplus C_{nq})$ when the group has odd
order, together with related results for a zero-sum invariant~\cite{Yang}. In particular, if $q$ is prime,
$n\geq 2$, and $p\geq 3$ is the smallest prime divisor of $nq$, then~\cite[Theorem~3.5]{Yang} gives
\[
c_0(C_q\oplus C_{nq})\leq
\max\left\{nq+2q^2-q-1,\ \frac{nq(q+1)}{p}-q+p-2\right\}.
\]

For convenience of the reader, the exact values already available in the literature are summarized
in Table~\ref{tab:known}.

\begin{table}[htbp]
\caption{Published cases of Conjecture~\ref{conj:1.1} used in this paper.}\label{tab:known}
\small
\begin{tabularx}{\textwidth}{@{}p{0.32\textwidth}Xl@{}}
\toprule
Class of groups & Exact value or condition & Reference\\
\midrule
Cyclic groups & $c_0(G)=|G|$ & \cite{GaoHamidoune,GHHQZ}\\
Groups of even order & $c_0(G)=m(G)$ & \cite{GHHQZ}\\
Elementary abelian groups & Values in~\eqref{eq:peng} & \cite{PengI,PengII}\\
All finite abelian $p$-groups & $c_0(G)=m(G)$, including~\eqref{eq:primepower} & \cite{GHHQZ,QuHan2017}\\
Rank at least five & $c_0(G)=m(G)$ & \cite{GHHQZ}\\
Rank three and four & The ranges stated above & \cite{GQZ}\\
Rank two, large order & $n_1\geq 2p$ and $|G|\geq 72p^6$ & \cite{GLQW}\\
Rank two, large first factor & $p\geq 11$ and $n_1\geq p^2$ & \cite{QLWZ}\\
$C_3\oplus C_{3q}$, $q\geq 5$ prime & $c_0(C_3\oplus C_{3q})=3q+3$ & \cite{QuLi}\\
\bottomrule
\end{tabularx}
\end{table}

Our first result removes all remaining groups of rank at least three.
\begin{theorem}\label{thm:1.2}
If $\rank(G)\geq 3$, then
\[
c_0(G)=m(G).
\]
\end{theorem}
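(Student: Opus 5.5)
The lower bound $c_0(G)\geq m(G)$ is the standard construction (Lemma~\ref{lem:2.3}). So only the upper bound $c_0(G)\leq |G|/p+p-2$ is needed, and only for the groups not already covered by Table~\ref{tab:known}. First I record what is left. Groups of even order, $p$-groups and groups of rank at least five are done. By \cite{GQZ}, in rank four only $C_3^3\oplus C_n$ remains, with $3\mid n$, $n$ odd, $n$ not a power of $3$, and $|G|<3.72\times 10^7$. In rank three only groups with $p\in\{3,5,7\}$ and $|G|<3.72\times10^7$ remain. So the task is an upper bound for a structured (though large) family with smallest prime $3,5,7$. I would give a uniform inductive argument rather than a finite check.

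The plan is induction via a subgroup of index $p$, or more generally a quotient. Write $G=C_{n_1}\oplus C_{n_2}\oplus\cdots\oplus C_{n_r}$ with $r\geq 3$ and $p\mid n_1$. Let $S$ be regular with $|S|\geq |G|/p+p-2$. Choose $H\leq G$ with $G/H\cong C_p$, and put $\varphi\colon G\to G/H$. I would split $S=S_H\cdot T$ according to whether terms lie in $H$.

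\emph{Case A: $|S_H|$ is large.} Then $S_H$ (after pruning to a regular subsequence of $H$, Lemma~\ref{lem:2.2}) has length at least $c_0(H)$, so $\Sigma_0(S_H)\supseteq H$. Here $c_0(H)$ is known inductively: $H$ has rank at least $2$, and in the rank-two subcase one uses the rank-two values including the exceptional $C_p\oplus C_{pn}$ value $pn+2p-3$. It suffices to find $p-1$ terms of $T$ whose subsums meet every nonzero coset of $H$. This holds as soon as $|T|\geq p-1$, since nonzero elements of $C_p$ with multiplicity allow $\Sigma_0$ to cover $C_p$ by Cauchy--Davenport.

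\emph{Case B: $|S_H|$ is small.} Then $T$ is long. I would use Kneser's theorem applied to $\Sigma_0$ of disjoint blocks of $S$: $\Sigma_0(S)=\Sigma_0(S_1)+\cdots+\Sigma_0(S_k)$. The stabilizer $K$ of the resulting sumset must be a proper subgroup if $\Sigma(S)\neq G$. Regularity gives $|S_K|\leq |K|-1$, and Kneser then gives $|\Sigma_0(S)|\geq$ a sum of lengths incompatible with missing an element. Because $H$ may be chosen among many index-$p$ subgroups (rank $\geq 3$ gives at least $p^2+p+1$ of them), I would pick $H$ to maximize $|S_H|$. If even the maximum is small, then $S$ is spread out and Kneser's bound gives $|\Sigma_0(S)|\geq|S|+1$-type growth, accumulating to $|G|$.

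The main obstacle is the balancing between Case A and Case B. The threshold $|G|/p+p-2$ leaves only $p-2$ slack beyond $|H|$, so Case A needs $|S_H|\geq c_0(H)$, which is about $|H|/p$. Between $|H|/p$ and $|H|$ terms in $H$, neither case applies directly. Here I would iterate: $S_H$ generates some subgroup $H'$ of $H$, and $\Sigma_0(S_H)$ is a large subset of $H$ with stabilizer analysis by Kneser. The terms outside $H$, numbering at least $|G|/p-|S_H|+p-2$, must fill the remaining cosets. Adding one term at a time, each addition either increases $|\Sigma_0|$ by at least one or forces periodicity. Periodicity by a subgroup $K$ with many terms of $S$ outside it is then ruled out by regularity of $S_K$. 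The rank $\geq 3$ hypothesis is what guarantees that the extremal periodic configurations come only from index-$p$ subgroups. In rank two, by contrast, $C_p\oplus C_{pn}$ has $|G|/p=\exp(G)$ and cyclic-subgroup configurations compete. I expect the delicate counting for $p=3$ with $H\cong C_3^2\oplus C_n$-like factors to be the hardest part.
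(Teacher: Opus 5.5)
Your Case A is sound: $S_H$ is automatically regular over $H$, and $|S_H|\le |H|-1$ leaves at least $p-1$ terms outside $H$. The rest of the proposal, however, contains no argument, and the mechanism you suggest cannot work. Adjoining terms one at a time with a gain of at least one per term only yields $|\Sigma_0(S)|\ge |S|+1\approx |G|/p$. Applying Theorem~\ref{thm:2.1} to arbitrary blocks with trivial stabilizer gives the same bound, a factor $p$ short of $|G|$.

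The missing idea is a \emph{multiplicative} gain. The paper takes a surjection $\pi:G\to C_p^2$ with kernel $K$. Using Hall's theorem (Lemma~\ref{lem:3.1}), it picks, for each of the $p+1$ subgroups $H_M\supseteq K$ of index $p$, pairwise disjoint blocks $T_M$ of $p-1$ terms outside $H_M$. The remaining terms are distributed into sequences $A_M$ over $H_M$. Since $\Sigma_0$ of the image of $T_M$ fills $G/H_M\cong C_p$, Lemma~\ref{lem:2.7} gives $|\Sigma_0(A_MT_M)|\ge p|\Sigma_0(A_M)|$. This is an excess of at least $(p-1)|A_M|$ per block. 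Kneser's theorem, with the trivial stabilizer from Lemma~\ref{lem:2.4}, adds these up to $|\Sigma_0(S)|\ge |G|-p(p-1)^2$. The leftover slack then forces $|S_K|\le p-2$ (Theorem~\ref{thm:4.1}). For $G$ not elementary abelian, averaging over all kernels of surjections $G\to C_p^2$ gives $|S|<p^3$, a contradiction (Theorem~\ref{thm:4.2}).

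So the dichotomy that works is not ``$|S_H|\ge c_0(H)$ for some $H$ of index $p$'' versus its negation. It is $|S_K|\ge p-1$ versus $|S_K|\le p-2$ for subgroups $K$ of index $p^2$, and no values of $c_0$ of subgroups are needed. This matters, because your Case A would need $c_0(H)$ for rank-two $H$ such as $C_p\oplus C_{pn}$ with $p\ge 5$, which is still open.

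Your counting claim is also wrong. Rank at least three does not give $p^2+p+1$ subgroups of index $p$; their number is governed by the rank of the Sylow $p$-subgroup. Take $G=C_q\oplus C_q\oplus C_{pq}$ with $q>p$, for example $C_5\oplus C_5\oplus C_{15}$, which lies in your residual family. Its only subgroup of index $p$ is $pG\cong C_q^3$, so maximizing $|S_H|$ gives nothing, and when $|S_H|<c_0(C_q^3)$ the proposal has no argument. The paper treats this case separately: the Sylow $p$-subgroup then has rank at most two, so a prime $q>p$ divides $n_1$. It runs the same Hall--Kneser scheme on a quotient $C_q^2$, with $|G|=pq^2h$ and $h\ge q>p$ (Theorems~\ref{thm:3.2} and~\ref{thm:3.3}).
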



We next settle two basic rank-two families.
\begin{theorem}\label{thm:1.3}
Let $p<q$ be distinct odd primes. Then
\[
c_0(C_p\oplus C_{pq})=pq+2p-3.
\]
\end{theorem}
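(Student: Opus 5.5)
The lower bound $c_0(C_p\oplus C_{pq})\ge pq+2p-3$ is the standard example recorded in Lemma~\ref{lem:2.3}, the third case of~\eqref{eq:m} with $n=q$. So the task is the upper bound: every regular sequence $S$ over $G=C_p\oplus C_{pq}$ with $|S|\ge pq+2p-3$ satisfies $\Sigma(S)=G$.

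I would argue by contradiction. Suppose $S$ is regular, $|S|=pq+2p-3$ (we may truncate, since subsequences of regular sequences are regular) and $\Sigma(S)\neq G$. The plan is to split $S$ along a suitable subgroup. The proper subgroups with large index-relevant structure are $H\cong C_p\oplus C_p$, $K\cong C_{pq}$ (several), $C_p\oplus C_q\cong C_{pq}$, and $C_q$. Let $\varphi\colon G\to G/H\cong C_q$. Regularity gives $|S_H|\le p^2-1$, so at least $pq+2p-3-(p^2-1)$ terms map to nonzero elements of $C_q$. Group these into disjoint blocks, and control $\Sigma$ using two tools.
\begin{enumerate}
\item Inside $H$, Peng's theorem~\eqref{eq:peng} says a regular sequence of length $2p-1$ in $C_p\oplus C_p$ is a basis. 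More generally, the sumset structure of $\Sigma_0$ of a long subsequence in $H$ is controlled via Kneser's theorem.
\item In the quotient $C_q$, a Cauchy--Davenport/Kneser estimate gives that $\Sigma_0$ of the $q$-part grows by at least one per term until it is all of $C_q$.
\end{enumerate}
Concretely, I would write $S=TU$, where $T$ is a maximal subsequence with $\Sigma_0(T)$ a union of cosets of some subgroup. Then I would apply Kneser to $\Sigma_0(S)=\Sigma_0(T)+\Sigma_0(U)$, where $\Sigma_0(U)\supseteq\{0,u_1\}+\cdots+\{0,u_k\}$. If $L$ is the stabilizer of $\Sigma_0(S)$, Kneser gives
\[
|\Sigma_0(S)|\ \ge\ \sum_i\bigl(|\{0,u_i\}+L|-|L|\bigr)+|L|.
\]
Each term not in $L$ contributes at least $|L|$.

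The case analysis runs over the possible stabilizers $L\in\{0,C_p,C_q,C_p\oplus C_p,C_{pq},\dots\}$. Regularity bounds $|S_L|\le|L|-1$ and $|S_{L'}|\le|L'|-1$ for each intermediate $L'$, which forces enough terms outside $L$ that $|\Sigma_0(S)|\ge|G|$. Since $|G|/p=pq=\exp(G)$, the count is tight exactly when almost all terms lie in a cyclic subgroup $K\cong C_{pq}$ of index $p$. So in that case I would separately use $c_0(C_{pq})=pq$ from the cyclic case, applied to $S_K$ with $|S_K|\le pq-1$. The $\ge 2p-2$ terms outside $K$ must then fill all $p$ cosets of $K$. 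This requires their images in $G/K\cong C_p$ to be such that $\Sigma_0(S_K)$ plus these coset shifts covers $G$. This is where the extra $p-1$ beyond $|G|/p+p-2$ is needed.

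The main obstacle is the near-extremal configuration. There $S_K$ has length close to $pq-1$ with $\Sigma_0(S_K)$ missing a few elements of $K$, and the outside terms are concentrated in few cosets. I would handle it with an inverse result for cyclic groups: a long sequence in $C_{pq}$ whose $\Sigma$ is not everything is essentially generated by one element with small multiplicities. This would be combined with the inverse theorem of Qu and Han~\cite{QuHan2016} to pin down the structure and derive a contradiction with regularity, possibly for a different $K$ or for $C_p\oplus C_q$.
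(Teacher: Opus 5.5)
\textbf{There is a genuine gap at the central counting step.}

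Since $|S|\geq\mathsf D(G)=pq+p-1$, Lemma~\ref{lem:2.4} applies as soon as $\Sigma(S)\neq G$. So \emph{every} $\Sigma_0(T)$ with $T\mid S$ has trivial stabilizer. This has three consequences for your outline.
\begin{enumerate}
\item Your ``maximal $T$ with $\Sigma_0(T)$ a union of cosets'' is vacuous.
\item Your case analysis over stabilizers $L$ of $\Sigma_0(S)$ collapses to $L=\{0\}$. Kneser's theorem applied to $\Sigma_0(T)+\{0,u_1\}+\cdots+\{0,u_k\}$ then gives one new element per term, i.e.\ $|\Sigma_0(S)|\geq |S|+1=pq+2p-2$. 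That is roughly a factor $p$ short of $|G|=p^2q$, and no regularity bookkeeping on top of a one-element-per-term count can reach $|G|$.
\item Passing to $G/H\cong C_q$ with $H\cong C_p^2$ does not rescue the count. Peng's theorem and the trivial stabilizer force $|S_H|\leq 2p-2$, and $S_H$ may be empty. The remaining terms, used only through their images in $C_q$, contribute at most a factor $q$ via Lemma~\ref{lem:2.7}.
\end{enumerate}
The later steps do not fill the gap either. $c_0(C_{pq})=pq$ says nothing about $S_K$ when $|S_K|\leq pq-1$. Your claim that the only tight configurations have most terms in a cyclic $K$ is unsupported. The appeal to unspecified cyclic inverse theorems and to~\cite{QuHan2016} is a placeholder, not an argument.

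\textbf{The missing idea} is a mechanism by which each term adds about $p$ new subset sums. The paper gets it by projecting onto $P\cong C_p^2$ rather than onto $C_q$. Write $G=P\oplus Q$ with $Q\cong C_q$, and let $\rho\colon G\to P$ be the projection.
\begin{enumerate}
\item The paper shows $|S_P|\leq 2p-2$. Using regularity in the $p+1$ subgroups $L\oplus Q$, it picks $2p-2$ terms containing $S_P$ whose projections are regular over $P$. It then adds one term with both coordinates nonzero. The resulting block $R$ has $\Sigma_0(R)$ projecting onto $P$ and $|\Sigma_0(R)|\geq p^2+p-1$ (Theorem~\ref{thm:2.11}, Lemma~\ref{lem:5.2}).
\item All $pq-2$ terms of $T=SR^{-1}$ have nonzero $Q$-coordinate. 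Lemma~\ref{lem:5.1} says that once a sumset projects onto $P$ and contains no full $Q$-coset, adjoining $(x,y)$ with $y\neq 0$ adds at least $[P:\langle x\rangle]\geq p$ elements, because $\langle(x,y)\rangle=\langle x\rangle\oplus Q$.
\item A Kneser/stabilizer argument carried out \emph{inside $P$}, not $G$, shows $\Sigma_0(\rho(T))=P$ (Lemma~\ref{lem:5.4}). Hence either $\Sigma_0(T)$ contains a $Q$-coset, or $|\Sigma_0(T)|\geq p^2(q-1)-1$.
\item In the first case $\Sigma_0(R)+\Sigma_0(T)=G$ because $\Sigma_0(R)$ projects onto $P$. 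In the second it follows by pigeonhole, since $|\Sigma_0(R)|+|\Sigma_0(T)|>|G|$.
\end{enumerate}
Without something playing the role of Lemma~\ref{lem:5.1}, your outline cannot close.
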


\begin{theorem}\label{thm:1.4}
Let $p<q$ be distinct odd primes. Then
\[
c_0(C_q\oplus C_{pq})=q^2+p-2.
\]
\end{theorem}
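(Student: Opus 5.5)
The statement to prove is Theorem~\ref{thm:1.4}: $c_0(C_q\oplus C_{pq})=q^2+p-2$ for odd primes $p<q$. Put $G=C_q\oplus C_{pq}$, so $|G|=pq^2$ and the smallest prime divisor is $p$. Here $G$ is neither cyclic nor of the form $C_p\oplus C_{pn}$, hence $m(G)=|G|/p+p-2=q^2+p-2$. The lower bound $c_0(G)\ge m(G)$ is Lemma~\ref{lem:2.3}. It remains to show that every regular sequence $S$ over $G$ with $|S|\ge q^2+p-2$ satisfies $\Sigma(S)=G$.

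\textbf{Reduction.} Let $H=pG\cong C_q\oplus C_q$, the unique subgroup of index $p$ in $G$. Regularity gives $|S_H|\le q^2-1$, so $S$ has at least $p-1$ terms outside $H$.

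\textbf{Case 1: $|S_H|$ large.} Suppose $S_H$, or a suitable subsequence of it, is regular in $H$ and has length at least $2q-1=c_0(C_q\oplus C_q)$ (Peng, \eqref{eq:peng}). Then $\Sigma(S_H)=H$. The terms of $S\,S_H^{-1}$ lie in $G\setminus H$, and their images in $G/H\cong C_p$ are nonzero. At least $p-1$ nonzero elements of $C_p$ have $\Sigma_0$ equal to all of $C_p$, by Cauchy--Davenport (or the $c_0$ value for cyclic groups). Hence $\Sigma(S)\supseteq H+\Sigma_0(S\,S_H^{-1})=G$, since $0\in\Sigma(S_H)=H$ covers the coset representatives.

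\textbf{Case 2: $S_H$ short or degenerate.} A subgroup $K\cong C_q$ of $H$ is proper, so it can hold at most $q-1$ terms. If $S_H$ is short, then many terms lie outside $H$, in the $p-1$ nontrivial cosets of $H$. In this case I would instead project onto $G/K\cong C_{pq}$ for a subgroup $K$ of order $q$, or onto $G/L$ with $L\cong C_{pq}$, and use Kneser's theorem on the subsequence sums. The strategy is to split $S=S_1\cdots S_k$ into blocks with $|\Sigma_0(S_i)|\ge|S_i|+1$ (no zero-sum-free degeneracy) and apply Kneser to $\Sigma_0(S_1)+\dots+\Sigma_0(S_k)$. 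If the stabilizer $P$ is trivial, then the bound $|\Sigma_0(S)|\ge |S|+1$ is far too weak. So I would group terms by cosets of $C_p$-quotients and use the fact that a regular sequence of length $\ge q^2+p-2$ produces sumsets of size growing multiplicatively. Concretely, $\Sigma_0$ of at least $q-1$ terms in distinct $C_q$-directions gives a full $C_q$.

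\textbf{Stabilizer analysis.} Kneser yields $|\Sigma_0(S)|\ge\sum_i|\Sigma_0(S_i)+P|-(k-1)|P|$. The possible stabilizers $P$ are the subgroups of $G$. For each type $P\in\{C_p,C_q,C_q\oplus C_q,C_{pq},\dots\}$ I would count how many terms of $S$ lie in $P$, which is at most $|P|-1$ by regularity. Each term outside $P$ contributes at least $|P|$ new elements modulo $P$. This should force $\Sigma_0(S)+P=G$, and then $\Sigma(S)=G$ once $0$ is also realized as a nonempty sum, using the fact that $S$ contains a zero-sum subsequence since $|S|\ge \mathsf D(G)=pq+q-1$.

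\textbf{Main obstacle.} The hard subcase is $P=\{0\}$ (aperiodic), or $P\cong C_p$ with $S$ concentrated on few cosets of $H$. Then the Kneser bound must be supplemented by a structural (inverse) argument in the spirit of the Qu--Han inverse theorem. I expect to show that a near-extremal sequence must have almost all its terms in $H$, which returns us to Case 1.
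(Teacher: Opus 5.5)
\textbf{There is a genuine gap: the aperiodic case, which is the whole content of the theorem, is left unresolved.}

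Your Case~1 is correct and coincides with the paper's first step. $S_H$ is automatically regular over $H=pG\cong C_q^2$, so $|S_H|\ge 2q-1$ gives $\Sigma(S_H)=H$, and the at least $p-1$ terms outside $H$ cover $G/H\cong C_p$. Your Kneser count for a nontrivial period $P$ also works: it gives $|\Sigma_0(S)|\ge |P|(|S|-|P|+2)\ge |G|$ for every nontrivial $P$. But this only reproves Lemma~\ref{lem:2.4}.

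The remaining case is $|S_H|\le 2q-2$ with $\operatorname{st}(\Sigma_0(S))=\{0\}$. There Kneser yields only $|\Sigma_0(S)|\ge q^2+p-1$, far below $pq^2$. Your closing expectation, that a near-extremal sequence has almost all its terms in $H$, is circular. After Case~1 it is literally the statement that $|S_H|\le 2q-2$ is incompatible with $\Sigma(S)\ne G$, which is exactly what must be proved. Projecting onto $C_{pq}$ or $C_q$, or invoking an unspecified inverse theorem, does not supply a mechanism for this.

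\textbf{What the paper does in this case.} Write $G=V\oplus K$ with $V\cong C_q^2$, $K\cong C_p$, and let $\rho\colon G\to V$ be the projection. Factor $S=RT$ with $|R|=2q-1$, $\Sigma_0(\rho(R))=V$, and every term of $T$ outside $V$.

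To build $R$, select $R_0\mid S$ of length $2q-2$, containing $S_V$, with $\rho(R_0)$ regular over $V$. This is possible because regularity in each subgroup $L\oplus K$ of order $pq$ leaves at least $q(q-p)+p-1\ge 2q-2$ terms projecting off the line $L$. Hence one can take at most $q-1$ terms per line. The estimate $|\Sigma_0(\rho(R_0))|\ge q^2-1$ from Theorem~\ref{thm:2.11} then lets a single extra term $g$ with both coordinates nonzero complete the projection to $V$.

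One then proves that $\Sigma_0(T)$ contains a full coset of $K$, in two steps.
\begin{enumerate}
\item A stabilizer analysis in $V$ gives $\Sigma_0(\rho(T))=V$. For trivial stabilizer this uses Lemma~\ref{lem:2.9} and $c_0(C_q^2)=2q-1$; for a stabilizer of order $q$ it uses a counting bound.
\item Lemma~\ref{lem:5.1} shows that, as long as no $K$-coset is contained, each further term of $T$ adds at least $|V|/\exp(V)=q$ new elements. This forces $|\Sigma_0(T)|>q^2(p-1)$, which is impossible for a set containing no $K$-coset.
\end{enumerate}
Since $\Sigma_0(R)$ projects onto $V$, this gives $\Sigma_0(S)=G$.

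None of these ingredients appears in your outline: the regular-projection selection, the $q^2-1$ bound, or the growth of $q$ new elements per term across cosets of $C_p$. Without something of this kind the aperiodic case is not handled.
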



The next theorem deals with groups with first invariant factor divisible by $p^2$ and having no other prime
divisor.
\begin{theorem}\label{thm:1.5}
Let $p$ be an odd prime, let $2\leq a\leq b$, and let $n$ be a positive integer with no prime
divisor smaller than $p$. Then
\[
c_0(C_{p^a}\oplus C_{p^bn})=p^{a+b-1}n+p-2.
\]
\end{theorem}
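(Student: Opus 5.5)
The lower bound $c_0(C_{p^a}\oplus C_{p^bn})\ge p^{a+b-1}n+p-2=|G|/p+p-2$ comes from Lemma~\ref{lem:2.3}: here $G$ is neither cyclic nor of the form $C_p\oplus C_{pm}$, since $a\ge 2$. For the upper bound I will induct on $n$. The base case $n=1$ is a $p$-group and is covered by \cite{GHHQZ,QuHan2017}. If $n>1$, note that every prime divisor of $n$ is at least $p$. Also, $p^{a+b}n\ge p^4 n$ is large relative to $p$, so for most parameter ranges the results of \cite{GLQW} (which need $n_1=p^a\ge 2p$ and $|G|\ge 72p^6$) or of \cite{QLWZ} (which need $p\ge 11$ and $n_1\ge p^2$) already apply. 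The cases left over have small $p\in\{3,5,7\}$ and bounded order, and I will treat these uniformly by the argument below rather than appeal to size.

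Let $S$ be a regular sequence over $G$ with $|S|\ge |G|/p+p-2$. The key step is to pick a subgroup $H$ of index $p$, say $H\cong C_{p^{a-1}}\oplus C_{p^bn}$, and to use the quotient map $\varphi:G\to G/H\cong C_p$. By regularity we have $|S_H|\le |H|-1$, so at least $p-1$ terms of $S$ lie outside $H$; more generally, at least $|S|-|K|+1$ terms lie outside any proper subgroup $K$. I will then split $S=TU$, where $U$ consists of terms in $H$ that form a regular sequence over $H$ of length $\ge c_0(H)$, and $T$ has many terms outside $H$. The value $c_0(H)$ is available by induction on $a+b$ (or by the $p$-group or rank results), with $c_0(H)=|H|/p+p-2$ when $a-1\ge 2$. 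When $a-1=1$, we have $H=C_p\oplus C_{p^bn}$ and $c_0(H)=p^bn+2p-3$; this is covered by the case $a=2$, which needs separate care. With such a $U$ we get $\Sigma(U)=H$. Then $\Sigma(S)\supseteq \Sigma_0(U)+\Sigma_0(T)$, and it suffices that $\varphi(\Sigma_0(T))=C_p$. This holds as soon as $T$ has $p-1$ terms outside $H$, by Cauchy--Davenport in $C_p$.

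If no such clean split exists, $S_H$ is too short or not regular in $H$. In that case I will use Lemma~\ref{lem:2.2} to reduce to regular subsequences, and apply Kneser's theorem to $\Sigma_0$ of long pieces. The idea is to partition $S$ into short subsequences $S_1,\dots,S_k$ and write $\Sigma_0(S)\supseteq\sum_i\Sigma_0(S_i)$. Kneser then gives either $|\Sigma_0(S)|\ge\sum|\Sigma_0(S_i)|-(k-1)|K|$ for the stabilizer $K$, or $\Sigma_0(S)$ is a union of $K$-cosets. In the periodic case, passing to $G/K$ and using regularity (few terms in $K$, so many terms nontrivial modulo $K$) forces $\Sigma_0(S)=G$. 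The condition that $n$ has no prime factor below $p$ is used here: every proper subgroup has index at least $p$, and each cyclic quotient $C_\ell$ satisfies $\ell\ge p$. This makes the counting $|\Sigma_0(S_i)|\ge |S_i|+1$ strong enough.

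The main obstacle is the case $a=2$, where the index-$p$ subgroup $H=C_p\oplus C_{p^bn}$ is exceptional. Here $c_0(H)=p^bn+2p-3$ exceeds $|H|/p+p-2=p^{b}n+p-2$ by $p-1$, so the budget from $|S|-|S_H|$ is exactly tight. For this case I would first try to choose $H$ among the several index-$p$ subgroups, including those isomorphic to $C_{p^2}\oplus C_{p^{b-1}n}$, which are non-exceptional when $b\ge 3$ and give slack. When $a=b=2$ (or $b=2$), I would fall back on the Kneser argument together with the inverse structure of long regular non-bases from \cite{QuHan2016}. That inverse structure says a regular sequence of length $c_0(H)-1$ that is not a basis is concentrated on a coset structure of an index-$p$ subgroup, and the extra terms outside $H$ then fill $G$.
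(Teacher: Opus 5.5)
Your proposal has a genuine gap at its central step. The clean split $S=TU$ needs an index-$p$ subgroup $H$ with $|S_H|\geq c_0(H)\geq |G|/p^2+p-2$, but regularity only bounds $|S_H|$ from above. Since $G/pG\cong C_p^2$, every term outside $pG$ lies in exactly one of the $p+1$ index-$p$ subgroups, so $\sum_H|S_H|=|S|+p|S_{pG}|$. Nothing prevents the terms from being spread evenly, and then every $|S_H|$ is about $|S|/(p+1)$, which is smaller than $|G|/p^2+p-2$. So the split fails in general, not only in a side case, and everything rests on the fallback, which does not work. By Lemma~\ref{lem:2.4} a counterexample has $\operatorname{st}(\Sigma_0(T))=\{0\}$ for every $T\mid S$, so your periodic branch never occurs. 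In the aperiodic branch, Kneser's theorem with $|\Sigma_0(S_i)|\geq |S_i|+1$ yields only $|\Sigma_0(S)|\geq |S|+1\approx |G|/p$, while you need $|G|$. These are the trivial bounds that every regular non-basis of this length already satisfies, so no bookkeeping with them can be ``strong enough''. What is missing is a mechanism by which a single term enlarges the set of subset sums by many elements rather than by one.

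There are two further problems. Your descent on $a+b$ reaches $a=1$, where you use $c_0(C_p\oplus C_{p^bn})=p^bn+2p-3$. For $p\geq 5$ and $n>1$ this is exactly the boundary family that remains open. Also, the inverse theorem from \cite{QuHan2016} that you invoke for $a=2$ is unspecified, and it again presupposes many terms inside $H$.

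The paper avoids index-$p$ subgroups altogether. After absorbing powers of $p$ into $p^b$, it starts from the $p$-group $C_{p^a}\oplus C_{p^b}$ and adjoins the coprime factors $C_{q^s}$ of $n$ one at a time (Lemma~\ref{lem:6.2}). For $G=H\oplus C_Q$ with projection $\pi$ onto $H$, it first shows via Theorem~\ref{thm:3.4} and Lemma~\ref{lem:6.1} that every sufficiently long subsequence projects onto $H$. It then picks $R$ with $|R|\leq c_0(H)$ and $\Sigma_0(\pi(R))=H$ using Lemma~\ref{lem:2.10}, and applies Lemma~\ref{lem:5.1}: each remaining term whose $C_Q$-coordinate generates $C_Q$ adds at least $|H|/\exp(H)=p^a$ new subset sums, unless a full coset of $C_Q$ is already present. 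This gain of $p^a$ per term is the growth your argument lacks. It forces $\Sigma_0(SR^{-1})$ to contain a coset of $C_Q$, and since $\Sigma_0(R)$ projects onto $H$, this gives $\Sigma_0(S)=G$.
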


Combining these results gives a complete answer outside the boundary family.
\begin{corollary}\label{cor:1.6}
Let $G$ be a nontrivial finite abelian group and let $p$ be the smallest prime divisor of $|G|$. If
$G$ is not isomorphic to $C_p\oplus C_{pn}$ for any integer $n\geq 2$, then
\[
c_0(G)=m(G).
\]
\end{corollary}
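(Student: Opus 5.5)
The corollary should follow by case analysis over the structure of $G$, using only the theorems above and the published results in Table~\ref{tab:known}. Write $G\cong C_{n_1}\oplus\cdots\oplus C_{n_r}$ with $1<n_1\mid\cdots\mid n_r$ and let $p$ be the smallest prime divisor of $|G|$.

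If $r=1$, the cyclic case gives $c_0(G)=|G|=m(G)$. If $r\geq 3$, Theorem~\ref{thm:1.2} gives the claim directly. If $|G|$ is even, the result of \cite{GHHQZ} covers it. If $G$ is a $p$-group, \eqref{eq:peng}, \eqref{eq:primepower} and \cite{GHHQZ,QuHan2017} apply. So the real work is in rank two with $|G|$ odd, $p\geq 3$, and $G=C_{n_1}\oplus C_{n_2}$ with $n_1\mid n_2$.

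Since $G\not\cong C_p\oplus C_{pn}$, we must have $n_1\neq p$. Write $n_1=p^a m$ with $p\nmid m$, where every prime divisor of $m$ exceeds $p$, and note $p\mid n_1$ because $p$ is the smallest prime dividing $|G|$ and $n_1\mid n_2$. I now split into three cases.
\begin{enumerate}
\item If $m=1$ and $a\geq 2$, then $n_1=p^a$ and $n_2=p^b n$ with $b\geq a$ and $p\nmid n$. All prime divisors of $n$ exceed $p$, so Theorem~\ref{thm:1.5} gives $c_0(G)=p^{a+b-1}n+p-2=|G|/p+p-2=m(G)$.
\item If $m=1$ and $a=1$, then $n_1=p$, and these are exactly the excluded groups $C_p\oplus C_{pn}$, with $n=1$ giving $C_p\oplus C_p$, already settled by Peng.
\item If $m>1$, then $n_1$ is divisible by $p$ and by some prime $q>p$. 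Here the intended route is to reduce to the groups treated in Theorems~\ref{thm:1.3} and~\ref{thm:1.4}, which handle the minimal cases $C_p\oplus C_{pq}$ and $C_q\oplus C_{pq}$, together with the rank-two large-order results.
\end{enumerate}

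The main obstacle is the third case. In the listed results it is covered only partially: by \cite{GLQW} when $n_1\geq 2p$ and $|G|\geq 72p^6$, and by \cite{QLWZ} when $p\geq 11$ and $n_1\geq p^2$. There $n_1\geq pq\geq 2p$ always holds, so only the order condition fails for small groups. The approach I would try is an inductive argument on a subgroup or quotient, in the spirit of the proof of Theorem~\ref{thm:1.5}. Choose $H$ with $G/H$ of prime order, and combine Kneser-type estimates for $\Sigma(S_H)$ with the known values of $c_0(H)$ and $c_0(G/H)$. This is the step where I expect the most difficulty. Presumably the paper's proofs of Theorems~\ref{thm:1.3}--\ref{thm:1.5} supply exactly this machinery, and I would invoke them in that inductive form rather than only as stated.
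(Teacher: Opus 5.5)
Your reduction to odd-order rank-two groups that are not $p$-groups matches the paper, and so does your use of Theorem~\ref{thm:1.5} when $n_1=p^a$ with $a\geq 2$. But there is a genuine gap: the remaining case, where $n_1$ has a prime divisor $q>p$, is left open. You only sketch an unspecified induction on a subgroup of prime index, and, as you note yourself, \cite{GLQW} and \cite{QLWZ} do not cover small groups.

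Your case split also has two errors. First, the claim $p\mid n_1$ is false: $p$ need only divide $n_2$, as in $C_5\oplus C_{15}$ with $p=3$. So your split misses the groups with $p\nmid n_1$, which include exactly the family $C_q\oplus C_{pq}$ of Theorem~\ref{thm:1.4}. Second, Theorem~\ref{thm:1.3} plays no role here, since $C_p\oplus C_{pq}$ is one of the excluded groups.

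The missing idea is that no induction is needed, because the reduction of Section~\ref{sec:quotients} handles this case directly. Suppose a prime $q>p$ divides $n_1$.
\begin{enumerate}
\item From $q\mid n_1\mid n_2$ you get a surjection $G\to C_q^2$ and $pq^2\mid |G|$, so write $|G|=pq^2h$.
\item Since $G$ is noncyclic and $n_1\ne p$, you have $m(G)=|G|/p+p-2$. This places $G$ exactly in the setting of Theorem~\ref{thm:3.3}.
\item If $h>1$, every prime factor of $h$ divides $|G|$, so $h\geq p$. Then Theorem~\ref{thm:3.3}, the Hall--Kneser argument with $r=q+1$ and $\ell=q-1$, gives $c_0(G)=m(G)$.
\item If $h=1$, then $n_1n_2=pq^2$ with $q\mid n_1\mid n_2$ forces $n_1=q$ and $n_2=pq$. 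So $G\cong C_q\oplus C_{pq}$, which is Theorem~\ref{thm:1.4}.
\end{enumerate}
This settles your case~(iii) completely, including the groups with $p\nmid n_1$ that your split overlooked. It does not require the large-order results of \cite{GLQW,QLWZ}.
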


For the boundary family we obtain a complete result when the smallest prime is $3$.
\begin{theorem}\label{thm:1.7}
For every odd integer $n\geq 3$,
\[
c_0(C_3\oplus C_{3n})=3n+3.
\]
\end{theorem}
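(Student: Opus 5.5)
The lower bound $c_0(C_3\oplus C_{3n})\ge 3n+3$ is the standard example from Lemma~\ref{lem:2.3}, since $m(C_3\oplus C_{3n})=3n+2\cdot 3-3=3n+3$. So the work is the upper bound. Let $S$ be a regular sequence over $G=C_3\oplus C_{3n}$ with $|S|\ge 3n+3$; we must show $\Sigma(S)=G$. By Lemma~\ref{lem:2.2} we may discard terms without changing $\Sigma_0$, so we may assume $|S|=3n+3$. When $n$ is a prime $q\ge5$ this is the result of Qu and Li~\cite{QuLi}; when $n$ is a power of $3$ it is \eqref{eq:primepower}. I would argue in general by induction on $n$ via a quotient argument. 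Fix a prime $r\mid n$ and let $H\le G$ be the subgroup with $G/H\cong C_3\oplus C_{3n/r}$ (or, when $r=3$ and $3\mid n$, the subgroup of order $3$ appropriate for the factorization used below). Let $\varphi\colon G\to G/H$.

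\emph{Step 1 (building zero-sum blocks).} I would extract from $S$ many disjoint short subsequences $T_1,\dots,T_k$ with $\sigma(T_i)\in H$. Since $\mathsf D(C_3\oplus C_{3m})=3m+2$, any $3n/r+2$ terms contain such a block after projecting to the quotient. Call the leftover sequence $U$. The plan is then to bound $\Sigma(S)$ from below by the sumset
\[
\Sigma_0(U)+\Sigma_0\bigl(\sigma(T_1)\cdots\sigma(T_k)\bigr),
\]
where the second factor is computed inside $H\cong C_r$. Regularity of $S$, applied to the preimages of proper subgroups, will be used to choose the $T_i$ so that their sums are nonzero in $H$. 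Then $k\ge r-1$ such sums already give all of $H$ (Cauchy--Davenport). With that, it suffices that $\Sigma_0(U)$ meets every coset of $H$, i.e.\ that $\varphi(U)$ is an additive basis of $G/H$ together with $0$.

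\emph{Step 2 (regularity of the image and induction).} I would show that $\varphi(U)$ is, after deleting terms, regular in $G/H$, of length at least $c_0(G/H)=3n/r+3$ by induction. The key counting is that $U$ still has length about $3n+3-(r-1)\cdot(\text{block length})$. To make this work I would instead take the blocks $T_i$ of length at most $3$ in most cases, which is possible if many terms lie in few cosets. When terms are spread over many cosets, Kneser's theorem applied to $\Sigma$ of sets of distinct elements yields growth directly. This leads to a two-case split: (a) the sequence is ``concentrated'', with some coset of a subgroup of index $3$ containing many terms, or (b) it is spread. Case (a) is handled by an explicit analysis in the index-$3$ subgroup $C_{3n}$ or $C_n\oplus C_3$, where cyclic results ($c_0(C_{3n})=3n$) apply. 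Case (b) is handled by Kneser: the stabilizer of $\Sigma(S)$ is either trivial, giving size $\ge |S|$-type growth that exceeds $|G|=9n$ once we iterate on set partitions, or a nontrivial subgroup $K$. In the latter case we reduce to $G/K$, again by induction.

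\emph{Main obstacle.} The hard step is case (a) when the exceptional structure of $C_3\oplus C_{3n}$ bites. The extremal examples of length $3n+2$ consist of a long progression in a cyclic subgroup of order $3n$ plus a few elements of order $3$ off it. I must show that one extra term breaks every such configuration. Here I would use the inverse theorem of Qu and Han~\cite{QuHan2016} for the cyclic part. This shows that the $\Sigma$ of the terms in $C_{3n}$ is a large arithmetic-progression-like set missing at most a few elements per coset. I would then check by hand, using that $n$ is odd (so $2$ is invertible and the three cosets of $C_{3n}$ interact symmetrically), that the three or more off-subgroup terms fill the remaining gaps. The oddness of $n$ and $p=3$ being smallest are exactly what make this finite check close. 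I expect most of the length of the proof to sit here.
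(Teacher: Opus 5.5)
\noindent There is a genuine gap: the induction step carries all the difficulty, it is never carried out, and in the form you state it, it cannot work.

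\textbf{The quotient step.} In Step~2 you claim that, after removing $r-1$ blocks, $\varphi(U)$ becomes (after deletions) a regular sequence over $G/H$ of length at least $c_0(G/H)=3n/r+3$. Nothing supports this, for three reasons.
\begin{enumerate}
\item Regularity does not descend to quotients. For $J<G/H$ the preimage has order $r|J|$, so $S$ may have up to $r|J|-1$ terms over $J$. For instance, when $r\neq 3$, regularity permits $3n-1$ terms in a cyclic subgroup of order $3n$, and its image has order only $3n/r$.
\item The length budget has no slack. Even if every block had length at most $\exp(G/H)=3n/r$, the blocks would use $(r-1)3n/r$ terms and leave exactly $3n/r+3$. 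So a single term lost to Lemma~\ref{lem:2.2} in the quotient breaks the count. With blocks of Davenport length $3n/r+2$ you are $2(r-1)$ terms short.
\item You give no mechanism forcing $\sigma(T_i)\neq 0$ in $G$. By Lemma~\ref{lem:2.4}, $S$ does have nonempty zero-sum subsequences, so this is a real issue.
\end{enumerate}
This is exactly the obstruction the paper points to with $|G|/p=\exp(G)$. Run the lifting argument of Lemma~\ref{lem:6.2} with $C_3\oplus C_{3m}$ as the base group. Lemma~5.1 then gains only $9m/3m=3$ new elements per term. This yields $|\Sigma_0(T)|\geq 9m(q-1)-9$, whereas $9m(q-1)+1$ is needed to force a full coset of $C_q$.

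\textbf{The fallback case split.} It does not repair the gap.
\begin{enumerate}
\item Under $\Sigma(S)\neq G$, Lemma~\ref{lem:2.4} already gives every $\Sigma_0(T)$ trivial stabilizer. So your ``nontrivial stabilizer, pass to $G/K$'' branch is vacuous.
\item In the trivial-stabilizer branch, Kneser applied to $\Sigma_0(S)=\sum_i\{0,g_i\}$ gives only $|\Sigma_0(S)|\geq |S|+1=3n+4$, about a third of $|G|$. ``Iterating on set partitions'' requires a concrete source of multiplicative growth, which you do not supply.
\item In case~(a), $c_0(C_{3n})=3n$ never applies, since regularity allows at most $3n-1$ terms in a subgroup of order $3n$. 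The appeal to an inverse theorem plus a ``finite check'' is a placeholder, not an argument.
\end{enumerate}

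\textbf{What the paper does instead.} It uses no induction on $n$. It works with $V=G/3G\cong C_3^2$ and applies Hall's theorem to choose disjoint pairs $T_M$ outside each $H_M=\pi^{-1}(M)$. Lemmas~\ref{lem:2.6} and~\ref{lem:2.7} then give the factor-$3$ gain $|\Sigma_0(A_MT_M)|\geq 3|\Sigma_0(A_M)|$, and Kneser yields $\sum_M e(A_M)\leq 1$ (Lemmas~\ref{lem:7.1}--\ref{lem:7.3}). This forces $S_K$ to be empty and $\sum_i e(P_i)\leq 1$ (Lemma~\ref{lem:8.3}). Consequently each $\Sigma_0(P_i)$ is an arithmetic progression (Lemma~\ref{lem:8.1}), and the differences $3g_i$ generate $K$ (Lemma~\ref{lem:8.5}). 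Lemma~\ref{lem:8.4}, combined with averaging over the solutions of $\sum_i r_iv_i=x$, then fills every coset of $K$. The factor-$3$ gain from the $C_3^2$ quotient and the resulting progression structure are the ideas your proposal is missing.
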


\begin{corollary}\label{cor:1.8}
If the smallest prime divisor of $|G|$ is $3$, then
\[
c_0(G)=m(G).
\]
\end{corollary}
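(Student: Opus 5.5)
The statement to prove is Corollary~\ref{cor:1.8}: if the smallest prime divisor of $|G|$ is $3$, then $c_0(G)=m(G)$. The plan is to split according to whether $G$ belongs to the boundary family, and in each case to reduce to a result already stated. No new additive argument should be needed; the whole proof is a case distinction plus a check that the hypotheses line up.

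\emph{Case 1: $G$ is not isomorphic to $C_3\oplus C_{3n}$ for any $n\ge 2$.} Since $p=3$ is the smallest prime divisor of $|G|$, Corollary~\ref{cor:1.6} applies directly and gives $c_0(G)=m(G)$. This case covers:
\begin{enumerate}
\item cyclic groups;
\item $C_3\oplus C_3$;
\item all groups of rank at least three (Theorem~\ref{thm:1.2});
\item rank-two groups whose first invariant factor is not $3$.
\end{enumerate}

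\emph{Case 2: $G\cong C_3\oplus C_{3n}$ with $n\ge 2$.} Here $|G|=9n$ is odd because its smallest prime divisor is $3$. Hence $n$ is odd, and together with $n\ge 2$ this forces $n\ge 3$. Theorem~\ref{thm:1.7} then gives $c_0(G)=3n+3$. By the third line of~\eqref{eq:m} with $p=3$,
\[
m(G)=pn+2p-3=3n+3,
\]
so the two values agree.

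The only point needing care is the bookkeeping in Case~2. One must confirm that $n$ is odd, so that Theorem~\ref{thm:1.7} applies. One must also note that $C_3\oplus C_3$ (the case $n=1$) is excluded from Case~2 and is instead handled by~\eqref{eq:peng} via Corollary~\ref{cor:1.6}. Beyond these checks I expect no obstacle, since all the substantive work lies in Corollary~\ref{cor:1.6} and Theorem~\ref{thm:1.7}.
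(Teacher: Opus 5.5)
Your proposal is correct and follows essentially the same route as the paper: invoke Corollary~\ref{cor:1.6} for everything outside the boundary family, then note that $|G|=9n$ odd forces $n$ odd and apply Theorem~\ref{thm:1.7}, where $m(G)=3n+3$. The only cosmetic difference is that the paper settles $C_3\oplus C_3$ by citing Theorem~\ref{thm:2.11} directly, whereas you let Corollary~\ref{cor:1.6} absorb it; that is equally valid, since that corollary excludes only $n\geq 2$.
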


In Section~2 we present some preliminaries and auxiliary results. We provide a reduction argument in Section~3, and the
theorem for groups of rank at least three is proved in Section~4. Section~5 gives the estimates
for cosets of a cyclic subgroup and proves Theorems~\ref{thm:1.3} and~\ref{thm:1.4}.
The lifting argument in Section~6 proves Theorem~\ref{thm:1.5} and Corollary~\ref{cor:1.6}.
The groups $C_p\oplus C_{pn}$ are considered in Section~7, and the proof for $p=3$ is given in Section~8.

\section{Preliminaries}\label{sec:preliminaries}

We use the notation introduced in Section~1. For $g\in G$ and $k\geq 0$,
let $g^{[k]}$ denote the sequence consisting of $k$ copies of $g$. If
$T\mid S$, then $ST^{-1}$ is obtained by deleting the terms of $T$, with
their multiplicities. The support of $S$ is denoted by
$\operatorname{supp}(S)$, and $\langle A\rangle$ denotes the subgroup
generated by $A\subseteq G$. For a nonempty subset $A$ of $G$, put
\[
  \operatorname{st}(A)=\{g\in G:g+A=A\}.
\]
We say that $A$ has trivial stabilizer if $\operatorname{st}(A)=\{0\}$.
We use
\[
 \Sigma_0(ST)=\Sigma_0(S)+\Sigma_0(T),\qquad
 \operatorname{st}(\Sigma_0(S))\subseteq\operatorname{st}(\Sigma_0(ST)).
\]

We first recall Kneser's theorem \cite{Kneser}; see also \cite{GeroldingerHalterKoch2006,Grynkiewicz,Nathanson} for detailed discussion.

\begin{theorem}[Kneser]\label{thm:2.1}
Let $A_1,\ldots,A_t$ be finite nonempty subsets of an abelian group, and
let $H=\operatorname{st}(A_1+\cdots+A_t)$. Then
\[
 |A_1+\cdots+A_t|
 \geq \sum_{i=1}^t|A_i+H|-(t-1)|H|.
\]
In particular, if $H=\{0\}$, then
\[
 |A_1+\cdots+A_t|\geq 1+\sum_{i=1}^t(|A_i|-1).
\]
\end{theorem}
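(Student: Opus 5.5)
This is Kneser's theorem, a classical result which the paper cites rather than proves. If I were to prove it, I would reduce to the two-set case and then prove that case by Kneser's original $e$-transform induction.

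\emph{Reduction to $t=2$.} Put $S=A_1+\cdots+A_t$ and $H=\operatorname{st}(S)$, and induct on $t$. Set $B=A_1+\cdots+A_{t-1}$ and $K=\operatorname{st}(B)$. Then $K\le H$, because $S=B+A_t$ and any period of $B$ is a period of $B+A_t$. Apply the two-set statement to $B+H$ and $A_t+H$. Their sum is $S+H=S$, whose stabilizer is $H$, so
\[
|S|\ge |B+H|+|A_t+H|-|H|.
\]
It remains to show $|B+H|\ge\sum_{i<t}|A_i+H|-(t-2)|H|$. Work in $G/H$ with the images of the $A_i$. The sumset $B+H$ has stabilizer $H'\supseteq H$, and the induction hypothesis applied to the sets $A_i+H$ gives
\[
|B+H|\ge \sum_{i<t}|A_i+H'|-(t-2)|H'|.
\]
Since $H\le H'$, each set $A_i+H'$ is a union of $|H'|/|H|$ cosets of $H$ in the worst case, and the desired bound follows from the elementary inequality
\[
|A_i+H'|-|H'|\ge |A_i+H|-|H|,
\]
which holds because adding $H'$ to $A_i+H$ adds at least as many elements as it enlarges the subtracted coset size. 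So the whole statement reduces to $t=2$.

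\emph{Case $t=2$.} Quotienting by $H$ reduces to the case $H=\{0\}$. We must then show $|A+B|\ge|A|+|B|-1$ when $A+B$ has trivial stabilizer, or equivalently, in general, $|A+B|\ge|A+H|+|B+H|-|H|$. Argue by induction on $|B|$, and among counterexamples choose one minimizing $|B|$ and then maximizing $|A|$. If $|B|=1$ the claim is trivial. Otherwise pick $a\in A$ and $e\in A-B$ such that the $e$-transform
\[
A'=A\cup(B+e),\qquad B'=B\cap(A-e)
\]
satisfies $\emptyset\ne B'\subsetneq B$. This transform preserves $|A'|+|B'|=|A|+|B|$ and $A'+B'\subseteq A+B$. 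If no such $e$ exists, then $A+B-B=A$, so $B-B\subseteq\operatorname{st}(A)$ and the inequality follows directly. Otherwise apply induction to $(A',B')$ with stabilizer $H_1=\operatorname{st}(A'+B')$. If $H_1\le H$ we are done. If not, consider $A+B\supseteq (A'+B')\cup(A+B+H_1)$-type coset decompositions and use a counting argument on cosets of $H_1$ to contradict minimality.

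\emph{Main obstacle.} The main obstacle is the final case, where the stabilizer grows under the transform. Kneser's original argument handles it with a careful double counting over cosets of $H_1$, and I would follow that argument. Alternatively I would cite the standard proofs in \cite{GeroldingerHalterKoch2006,Grynkiewicz,Nathanson}, as the paper itself does.
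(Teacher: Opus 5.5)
The paper does not prove this statement; it cites Kneser and the standard texts, so your closing fallback is exactly the paper's treatment. As a proof, however, your sketch has two gaps.

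\textbf{The reduction to $t=2$ uses a false inequality.} The ``elementary inequality'' $|A_i+H'|-|H'|\geq |A_i+H|-|H|$ fails in general. Take $H=\{0\}$, $H'=G=C_3$ and $|A_i|=2$: the left side is $0$ and the right side is $1$. When $A_i+H$ has several $H$-cosets inside one $H'$-coset, passing to $A_i+H'$ gains fewer than $|H'|-|H|$ elements.

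You do not need this inequality. The observation that gave you $K\leq H$ applies equally to $B+H$: $\operatorname{st}(B+H)\leq\operatorname{st}(B+H+A_t)=\operatorname{st}(S)=H$. Since $B+H$ is $H$-periodic, this forces $H'=H$, and the induction hypothesis gives $|B+H|\geq\sum_{i<t}|A_i+H|-(t-2)|H|$ directly.

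Equivalently, pass to $G/H$ at the outset. There every partial sum of the images of the $A_i$ has trivial stabilizer, because it is contained in the stabilizer of the full sum. The $t$-set bound then follows from the two-set bound with trivial stabilizer by a Cauchy--Davenport-type induction.

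\textbf{The hard case of $t=2$ is not handled.} The steps up to the existence of a proper transform are fine. In the reduced setting, $B-B\subseteq\operatorname{st}(A)\subseteq\operatorname{st}(A+B)=\{0\}$ even forces $|B|=1$. The case $H_1=\{0\}$ is also fine.

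The case $H_1\neq\{0\}$ carries essentially all of Kneser's theorem, and what you write there is not a workable argument. The containment $A+B\supseteq A+B+H_1$ is false in exactly that case: $\operatorname{st}(A+B)=\{0\}$ while $H_1\neq\{0\}$, so $A+B$ is not $H_1$-periodic. All you have is $A+B\supseteq A'+B'=A'+B'+H_1$. The real work is to show that $A+B$ has enough elements outside this $H_1$-periodic set, which needs the further analysis carried out in the standard proofs.

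As it stands, your two-set case is a citation rather than a proof. That is acceptable here only because the paper also uses the theorem as a black box.
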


The Davenport constant $\mathsf D(G)$ is the least positive integer $d$
such that every sequence over $G$ of length at least $d$ has a nonempty
zero-sum subsequence; see \cite{HanZhang} for a recent study and references therein. We shall use
\begin{equation}\label{eq:Davenport}
 \mathsf D(G)\leq m(G),\qquad
 \mathsf D(C_m\oplus C_n)=m+n-1\quad(1\leq m\mid n);
\end{equation}
see \cite[Lemmas~2.6 and~2.7]{GHHQZ} and
\cite[Lemma~2.5]{GQZ}. We also use $\mathsf D(G)\leq |G|$.
For the trivial group we take $c_0(\{0\})=\mathsf D(\{0\})=1$.

The following three results are already known.

\begin{lemma}[{\cite[Lemma~2.2]{GQZ}}]\label{lem:2.2}
If a sequence $S$ over $G$ is not regular, then it has a proper regular
subsequence $T$ such that $\Sigma_0(T)=\Sigma_0(S)$.
\end{lemma}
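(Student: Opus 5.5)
We prove Lemma~\ref{lem:2.2} by removing terms one at a time, each removal keeping $\Sigma_0$ unchanged and strictly shortening the sequence; the process ends at a regular sequence.

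\textbf{Single-step reduction.} Suppose $S$ is not regular. Then there is a proper subgroup $H<G$ with $|S_H|\geq |H|$. Apply the Davenport-type fact $\mathsf D(H)\leq |H|$, recorded after~\eqref{eq:Davenport}, to the group $H$. A more convenient form is the classical prefix-sum argument: if $h_1,\dots,h_k$ are elements of $H$ with $k\ge |H|$, consider the chain
\[
\Sigma_0(h_1)\subseteq \Sigma_0(h_1h_2)\subseteq\cdots\subseteq\Sigma_0(h_1\cdots h_k)\subseteq H.
\]
Here $\Sigma_0(h_1)=\{0,h_1\}$ has size at least $1$. There are $k\ge |H|$ inclusions, and the sizes are bounded by $|H|$. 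If every inclusion were strict, we would have $|\Sigma_0(h_1\cdots h_k)|\geq k+1>|H|$, a contradiction. More carefully, put $A_0=\{0\}$ and $A_i=\Sigma_0(h_1\cdots h_i)=A_{i-1}\cup(A_{i-1}+h_i)$. Then $k+1>|H|$ sets sit in $H$, so some $i$ satisfies $A_i=A_{i-1}$. This means $A_{i-1}+h_i\subseteq A_{i-1}$, hence $h_i+A_{i-1}=A_{i-1}$, so $h_i\in\operatorname{st}(A_{i-1})$.

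\textbf{Deleting the redundant term.} Let $T=h_1\cdots h_{i-1}$ and let $R=S(h_1\cdots h_i)^{-1}$ be the remaining part of $S$. Then
\[
\Sigma_0(S)=\Sigma_0(T)+\Sigma_0(h_i)+\Sigma_0(R)=(A_{i-1}\cup(A_{i-1}+h_i))+\Sigma_0(R)=A_{i-1}+\Sigma_0(R)=\Sigma_0(Sh_i^{-1}).
\]
This uses the identity $\Sigma_0(ST)=\Sigma_0(S)+\Sigma_0(T)$. So $S'=Sh_i^{-1}$ is a proper subsequence of $S$ with $\Sigma_0(S')=\Sigma_0(S)$.

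\textbf{Iteration.} If $S'$ is regular, we are done. Otherwise repeat the single step. Lengths strictly decrease, so the process terminates. It terminates at a regular sequence because the empty sequence is regular: $|S_H|=0\le |H|-1$. The final sequence $T$ is a proper subsequence, since at least one deletion occurred, and $\Sigma_0(T)=\Sigma_0(S)$.

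The only point needing care is the pigeonhole step giving a stabilizing element. It is elementary once the chain $A_i$ is set up, so there is no real obstacle.
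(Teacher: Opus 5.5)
Your argument is correct: the pigeonhole count on the chain $A_0\subseteq A_1\subseteq\cdots\subseteq A_k\subseteq H$ yields a term $h_i\in\operatorname{st}(A_{i-1})$ whose deletion leaves $\Sigma_0$ unchanged, and iterating ends at a regular proper subsequence. The paper only cites \cite{GQZ} for this lemma, but your argument is the same deletion argument it uses in the proof of Lemma~\ref{lem:2.10}, where the terms of $Y_J$ are adjoined one at a time and one that fails to enlarge the sumset is deleted; the only blemishes are the unused appeal to $\mathsf D(H)\leq |H|$ and using the letter $T$ for two different sequences.
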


\begin{lemma}[{\cite[Lemma~2.3]{GQZ}}]\label{lem:2.3}
For every nontrivial finite abelian group $G$, we have $c_0(G)\geq m(G)$.
\end{lemma}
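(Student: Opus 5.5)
The statement is Lemma~\ref{lem:2.3}, $c_0(G)\geq m(G)$. I would prove it by exhibiting, in each case of~\eqref{eq:m}, a regular sequence $S$ of length $m(G)-1$ with $\Sigma(S)\neq G$. The case analysis follows~\eqref{eq:m}. In every case regularity is checked by running over the proper subgroups $K<G$ that can contain many terms of $S$. Non-basis is checked by writing $\Sigma_0(S)$ as a sumset of arithmetic progressions and finding an element it misses.

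\emph{Cyclic groups.} Let $g$ be a generator of $G$ and take $S=g^{[|G|-1]}$. The only subgroup containing a term of $S$ is $G$ itself, so $S$ is regular. Moreover $\Sigma(S)=\{g,2g,\dots,(|G|-1)g\}$ misses $0$.

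\emph{$C_p\oplus C_p$.} With a basis $e_1,e_2$, take $S=e_1^{[p-1]}e_2^{[p-1]}$. Each proper subgroup is cyclic of order $p$ and contains at most $p-1$ terms, so $S$ is regular. A sum $ae_1+be_2$ with $0\le a,b\le p-1$ vanishes only for $a=b=0$, so $0\notin\Sigma(S)$.

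\emph{The generic noncyclic case} ($|G|/p+p-2$). Choose a subgroup $H$ of index $p$ and $g\notin H$. Let $T$ be the sequence consisting of the nonzero elements of $H$, each taken once, so $|T|=|H|-1$, and put $S=T\,g^{[p-2]}$, which has length $|G|/p+p-3$. Since $g+H$ has order $p$ in $G/H$, we get
\[
\Sigma_0(S)\subseteq H\cup(g+H)\cup\dots\cup((p-2)g+H),
\]
so the whole coset $(p-1)g+H$, which does not contain $0$, is missed. For regularity, let $K<G$ be proper. If $g\notin K$, then $|S_K|=|K\cap H|-1\le|K|-1$. If $g\in K$, then $K\not\subseteq H$, so $|K\cap H|=|K|/p$ and
\[
|S_K|\le |K|/p-1+p-2\le |K|-1,
\]
because $|K|\ge p$. (For $G=C_p\oplus C_p$ this construction is one term too short, which is why that case is treated separately above.)

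\emph{The exceptional family $C_p\oplus C_{pn}$, $n\ge2$.} This is the step I expect to be the main obstacle. We need length $pn+2p-4$, which is $p-1$ more than the generic construction gives. The natural first attempts fail. Any sequence containing $e_2^{[pn-1]}$ for an element $e_2$ of order $pn$, together with enough terms outside $\langle e_2\rangle$, tends to cover all of $G$. I checked directly that $e_2^{[pn-2]}e_1^{[p-1]}(e_1+e_2)^{[p-1]}$ and $e_1^{[p-1]}e_2^{[pn-1]}(e_1+e_2)^{[p-2]}$ are bases.

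I would therefore look for a construction in which $\Sigma_0(S)$ is trapped in a proper subset with a nontrivial stabilizer, using Kneser's theorem (Theorem~\ref{thm:2.1}) as a guide. Concretely, I would split $S$ into a part lying in a subgroup $K\cong C_p\oplus C_n$ (index $p$ in $G$, containing the $p$-torsion) and a part in a coset progression. The idea is to use the extra room coming from $|G|/p=\exp(G)$: a subgroup of index $p$ can itself carry a near-extremal sequence with $\Sigma_0\ne K$.

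A first attempt along these lines fails. Replacing $T$ in the generic construction by a regular non-basis of $K$ does not help by itself: any nonempty sumset of the missed set of $K$ with the progression $\{0,g,\dots,(p-2)g\}$ still misses only the coset $(p-1)g+K$. The sequence must therefore mix the coset coordinate with the $C_{pn}$-coordinate, and I would search among sequences of the shape $e_1^{[p-1]}\,e_2^{[u]}\,(e_1+ke_2)^{[v]}$ with $u+v=pn+p-3$. I would tune $k,u,v$ so that a specific target $xe_1+ye_2$ forces incompatible congruences on the multiplicities, for instance a target forcing $v$-coefficient $0$ modulo $p$ together with $u$-coefficient exceeding $u$. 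I would then verify regularity of the chosen sequence on the subgroups $\langle e_2\rangle$, $\langle e_1+ke_2\rangle$ and those containing $e_1$. If this search stalls, I would instead cite the explicit example from~\cite{GHHQZ,QuHan2017}, where the value $pn+2p-3$ originates.
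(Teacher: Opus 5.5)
\textbf{Gap: the exceptional family $C_p\oplus C_{pn}$, $n\ge 2$.} Your constructions for cyclic $G$, for $C_p\oplus C_p$, and for the generic case $S=T\,g^{[p-2]}$ are correct, including the regularity checks. The generic one is the standard index-$p$ example the paper alludes to.

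The gap is the family $G=C_p\oplus C_{pn}$ with $n\geq 2$, which is exactly the case that makes $m(G)$ exceptional. You never produce a regular non-basis of length $pn+2p-4$ there, and deferring to a citation is not a proof.

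Moreover, the family you propose to search cannot succeed for odd $p$. Since $\Sigma_0(e_1^{[p-1]})=\langle e_1\rangle$, the sequence $S=e_1^{[p-1]}e_2^{[u]}(e_1+ke_2)^{[v]}$ satisfies
\[
\Sigma_0(S)=\langle e_1\rangle+\{(i+jk)e_2:\ 0\le i\le u,\ 0\le j\le v\}.
\]
\begin{enumerate}
\item Regularity on $\langle e_2\rangle$ gives $u\le pn-1$.
\item If $\gcd(k,pn)=1$, the coefficient sets $\{0,\dots,u\}$ and $k\{0,\dots,v\}$ modulo $pn$ have sizes adding up to $u+v+2=pn+p-1>pn$. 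Their sumset is therefore everything, by pigeonhole.
\item If $d=\gcd(k,pn)>1$, then $d\ge p$. Either the same count applies, or $\{jke_2\}$ is all of $\langle de_2\rangle$ and one needs $u\ge d-1$. This is forced by regularity of $\langle e_1,de_2\rangle$, which contains $p-1+v$ terms. Indeed, if $u\le d-2$, then $p-1+v\ge pn+2p-2-d>p^2n/d-1$, because $d+p^2n/d\le pn+p$ for $p\le d\le pn$.
\end{enumerate}
So $\Sigma_0(S)=G$ always, and $|S|\ge\mathsf D(G)=pn+p-1$ makes $S$ a basis. The underlying reason is that $e_1^{[p-1]}$ turns $\Sigma_0(S)$ into a union of $\langle e_1\rangle$-cosets. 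Everything then reduces to a sumset in $C_{pn}$ of two progressions that are too long, and no congruence tuning of $k,u,v$ can help.

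\textbf{The missing idea.} Use an index-$p$ subgroup that is \emph{cyclic}, namely $H=\langle e_2\rangle$; this is where $|G|/p=\exp(G)$ enters. Fill $H$ up to a single point, and add one element outside $H$ with multiplicity $2p-2$, so that its multiples meet every coset of $H$ twice except one. Concretely, take
\[
S=e_2^{[pn-2]}\,(e_1+e_2)^{[2p-2]},\qquad |S|=pn+2p-4.
\]
Its subset sums are $je_1+(i+j)e_2$ with $0\le i\le pn-2$ and $0\le j\le 2p-2$. The coset $(p-1)e_1+H$ is reached only for $j=p-1$. This gives $(p-1)e_1+te_2$ with $t\in\{p-1,\dots,pn+p-3\}$, so the nonzero element $(p-1)e_1+(p-2)e_2$ is not in $\Sigma(S)$.

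For regularity, note that $\langle e_2,e_1+e_2\rangle=G$, so a proper subgroup contains terms of at most one kind. A subgroup containing $e_2$ or $e_1+e_2$ has order at least $pn$, which exceeds both $pn-2$ and $2p-2$.

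This example works for every $p$, including $p=2$. Incidentally, for $p=2$ both sequences you dismissed as bases are in fact non-bases, since $0\notin\Sigma(S)$ there.
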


\begin{lemma}[{\cite[Lemma~2.3]{GHHQZ}}]\label{lem:2.4}
Let $p$ be the smallest prime divisor of $|G|$, and let $S$ be a regular
sequence with
\[
 |S|\geq\max\{|G|/p+p-2,\mathsf D(G)\}.
\]
If $\Sigma(S)\neq G$, then $0\in\Sigma(S)$ and, for every nonempty
subsequence $T\mid S$,
\[
 \operatorname{st}(\Sigma_0(T))=\{0\},\qquad
 |\Sigma_0(T)|\geq |T|+1.
\]
\end{lemma}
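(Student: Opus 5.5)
We argue by contradiction: if some $\Sigma_0(T)$ has a nontrivial stabilizer, that stabilizer passes to all of $\Sigma_0(S)$, and counting inside and outside it forces $|S|\leq |G|/p+p-3$. The size bound then follows from Kneser's theorem (Theorem~\ref{thm:2.1}).

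First, since $|S|\geq \mathsf D(G)$, the sequence $S$ has a nonempty zero-sum subsequence, so $0\in\Sigma(S)$ and $\Sigma(S)=\Sigma_0(S)$. In particular $\Sigma_0(S)\neq G$.

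Next, let $T\mid S$ be nonempty, and suppose $H=\operatorname{st}(\Sigma_0(T))\neq\{0\}$. Since $\operatorname{st}(\Sigma_0(T))\subseteq \operatorname{st}(\Sigma_0(S))$, the subgroup $K=\operatorname{st}(\Sigma_0(S))$ is nontrivial. It is also proper, because $\Sigma_0(S)\neq G$. Write $S=S_K S'$, where $S'$ consists of the terms outside $K$, and let $\varphi\colon G\to G/K$ be the quotient map. Regularity gives $|S_K|\leq |K|-1$. Since $\Sigma_0(S)$ is a union of $K$-cosets, we have $\Sigma_0(S)=\Sigma_0(S')+K$, so $\Sigma_0(\varphi(S'))\neq G/K$ and $\varphi(S')$ consists of nonzero terms.

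The key intermediate claim is
\[
 |S'|\leq [G:K]-2 .
\]
Granting it, and setting $x=|K|$ with $p\leq x\leq |G|/p$, we get
\[
|S|\leq x-1+|G|/x-2\leq |G|/p+p-3 .
\]
The last inequality holds because $x+|G|/x$ is convex and is maximized at the endpoints of $[p,|G|/p]$. This contradicts $|S|\geq |G|/p+p-2$. Hence every nonempty $T$ has $\operatorname{st}(\Sigma_0(T))=\{0\}$.

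Then for $T=g_1\cdot\ldots\cdot g_k$ we have $\Sigma_0(T)=\{0,g_1\}+\cdots+\{0,g_k\}$, and this sumset has trivial stabilizer. Each $g_i\neq 0$, since a zero term would violate regularity for the trivial subgroup. The trivial-stabilizer case of Theorem~\ref{thm:2.1} therefore gives $|\Sigma_0(T)|\geq 1+k=|T|+1$.

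The main obstacle is the claim $|S'|\leq [G:K]-2$. It is false for arbitrary sequences of nonzero elements in $G/K$, so regularity of $S$ has to enter. I would add the terms of $\varphi(S')$ one at a time. Adding a term $\bar g$ to a subsequence $U$ strictly enlarges $\Sigma_0(U)$ unless $\bar g\in\operatorname{st}(\Sigma_0(U))$. I would then order the terms so that terms lying in a stabilizer are postponed. Let $L$ be such a stabilizer; it is a proper subgroup of $G/K$ because $\Sigma_0(\varphi(S'))\neq G/K$. The terms of $S'$ lying over $L$ lie in the proper subgroup $\varphi^{-1}(L)$ of $G$, so regularity bounds their number by $|\varphi^{-1}(L)|-1$. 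If this per-term bookkeeping becomes too messy, I would instead apply Kneser with the stabilizer $L$ to each block. Combining the block bounds with the strict growth of $\Sigma_0$ outside stabilizers should give the bound, possibly after an induction on $[G:K]$. I expect this careful accounting to be the technical heart of the argument.
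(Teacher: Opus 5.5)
Your proof is correct apart from one step, but that step is a genuine gap. The parts that work are the use of $\mathsf D(G)$, the transfer of a nontrivial stabilizer from $\Sigma_0(T)$ to $K=\operatorname{st}(\Sigma_0(S))$, the reduction of the contradiction to $|S'|\le[G:K]-2$ via $|K|-1+[G:K]-2\le|G|/p+p-3$, and the closing use of Theorem~\ref{thm:2.1}. The gap is that claim itself: you leave it unproved, and your sketch aims at the wrong difficulty.

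The missing observation is that $K$ is the \emph{full} stabilizer of $\Sigma_0(S)$. Consequently the set $\Sigma_0(\varphi(S'))=\varphi(\Sigma_0(S))$ has trivial stabilizer in $G/K$. To see this, note that $\Sigma_0(S)$ is a union of $K$-cosets, so $\Sigma_0(S)=\varphi^{-1}(\varphi(\Sigma_0(S)))$. Hence any $\bar x$ fixing the image lifts to an $x$ with $x+\Sigma_0(S)=\Sigma_0(S)$, that is, $x\in K$ and $\bar x=0$.

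Now $\Sigma_0(\varphi(S'))$ is the sum of the sets $\{0,\varphi(g)\}$ over the terms $g$ of $S'$, and each $\varphi(g)\neq 0$. The trivial-stabilizer case of Theorem~\ref{thm:2.1} in $G/K$ gives $|\Sigma_0(\varphi(S'))|\ge|S'|+1$. Properness gives $|\Sigma_0(\varphi(S'))|\le[G:K]-1$, hence $|S'|\le[G:K]-2$.

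Equivalently, apply the general form of Kneser's theorem in $G$ to $\{0,g_1\}+\cdots+\{0,g_k\}$ with stabilizer $K$. It yields $|\Sigma_0(S)|\ge|K|\,(|S'|+1)$, whereas $|\Sigma_0(S)|\le|G|-|K|$.

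Regularity plays no role in this step. It is needed only for $|S_K|\le|K|-1$ and to exclude zero terms. The failures you have in mind for arbitrary sequences of nonzero elements of $G/K$ (for instance, many copies of an element of a proper subgroup) all have subset-sum sets with nontrivial stabilizer. That is exactly what the maximality of $K$ rules out.

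The same observation settles your term-by-term bookkeeping. Stabilizers can only grow as terms are added, so every intermediate stabilizer $L$ lies in $\operatorname{st}(\Sigma_0(\varphi(S')))=\{0\}$. There is nothing to postpone, and each nonzero term enlarges the sumset, as in Lemma~\ref{lem:2.5}. Had a nontrivial $L$ been possible, bounding the postponed terms through regularity by roughly $|\varphi^{-1}(L)|-1$ would only add to your estimate, not produce $[G:K]-2$. So the plan as written does not close.

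With the stabilizer observation inserted, your proof becomes the standard Kneser argument for this lemma, which the paper quotes from \cite{GHHQZ}.
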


To keep the later estimates short, we denote
\begin{equation}\label{eq:excess}
 e(T)=|\Sigma_0(T)|-|T|-1.
\end{equation}
Thus $e(1)=0$, and Lemma~\ref{lem:2.4} gives $e(T)\geq 0$ for every
$T\mid S$ under its hypotheses.

\begin{lemma}\label{lem:2.5}
Let $T$ be a sequence over $G$ and $g\in G\setminus\{0\}$. If
$\Sigma_0(T)$ has trivial stabilizer, then $e(Tg)\geq e(T)$.
\end{lemma}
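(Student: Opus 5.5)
The plan is a direct application of the additive form of Kneser's theorem (Theorem~\ref{thm:2.1}) to the decomposition
\[
\Sigma_0(Tg)=\Sigma_0(T)+\{0,g\},
\]
which is the identity $\Sigma_0(ST)=\Sigma_0(S)+\Sigma_0(T)$ recorded above, applied with the one-term sequence $g$, for which $\Sigma_0(g)=\{0,g\}$. Since $g\neq 0$, we have $|\{0,g\}|=2$. Unwinding~\eqref{eq:excess}, the inequality $e(Tg)\geq e(T)$ is equivalent to
\[
|\Sigma_0(Tg)|-|T|-2\;\geq\;|\Sigma_0(T)|-|T|-1,
\]
that is, to $|\Sigma_0(Tg)|\geq|\Sigma_0(T)|+1$. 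So the whole task is to show that adjoining the nonzero element $g$ strictly enlarges $\Sigma_0(T)$.

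First I will argue directly, by contradiction. Suppose $|\Sigma_0(Tg)|\le|\Sigma_0(T)|$. Since $0\in\{0,g\}$, we have $\Sigma_0(T)\subseteq\Sigma_0(T)+\{0,g\}=\Sigma_0(Tg)$, and the sets are finite, so equality holds: $\Sigma_0(T)+\{0,g\}=\Sigma_0(T)$. Hence $g+\Sigma_0(T)\subseteq\Sigma_0(T)$. Translation is a bijection and the set is finite, so this gives $g+\Sigma_0(T)=\Sigma_0(T)$. Thus $g\in\operatorname{st}(\Sigma_0(T))=\{0\}$, contradicting $g\neq0$.

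Alternatively, one can invoke Kneser through the stabilizer monotonicity $\operatorname{st}(\Sigma_0(T))\subseteq\operatorname{st}(\Sigma_0(Tg))$, but that inclusion goes the wrong way for our purpose: triviality of the smaller stabilizer says nothing about the larger one. So I would avoid Kneser and use the elementary argument above, which only needs the hypothesis on $\Sigma_0(T)$ itself.

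There is no real obstacle here. The only care needed is the finiteness step, which turns the inclusion $g+A\subseteq A$ into the equality $g+A=A$ and hence into membership of $g$ in the stabilizer.
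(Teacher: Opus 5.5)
Your proof is correct and is the same as the paper's: $e(Tg)\ge e(T)$ reduces to $|\Sigma_0(T)\cup(g+\Sigma_0(T))|\ge|\Sigma_0(T)|+1$, which holds because $g\ne0$ does not lie in the trivial stabilizer of $\Sigma_0(T)$. The opening mention of Kneser's theorem is unnecessary, since the argument you actually give is purely elementary, just as in the paper.
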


\begin{proof}
Since $g\notin\operatorname{st}(\Sigma_0(T))$, the set
$\Sigma_0(T)\cup(g+\Sigma_0(T))$ has at least one more element than
$\Sigma_0(T)$.
\end{proof}

We shall also use the following consequence of the Cauchy--Davenport
theorem \cite{Davenport1935}.

\begin{lemma}\label{lem:2.6}
Let $p$ be prime and let $U$ be a sequence of nonzero elements of $C_p$.
Then
\[
 |\Sigma_0(U)|\geq\min\{p,|U|+1\}.
\]
In particular, $\Sigma_0(U)=C_p$ if $|U|\geq p-1$.
\end{lemma}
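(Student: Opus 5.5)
We prove Lemma~\ref{lem:2.6} by induction on $|U|$, using the Cauchy--Davenport theorem $|A+B|\geq\min\{p,|A|+|B|-1\}$ for nonempty subsets $A,B\subseteq C_p$. The base case $U=1$ is immediate: $\Sigma_0(1)=\{0\}$ has exactly one element, which equals $\min\{p,1\}$.

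For the inductive step, write $U=U'g$ with $g\neq 0$. Then
\[
 \Sigma_0(U)=\Sigma_0(U')+\{0,g\},
\]
which is the identity $\Sigma_0(ST)=\Sigma_0(S)+\Sigma_0(T)$ from Section~\ref{sec:preliminaries} applied with $T=g$. Since $g\neq 0$, the set $\{0,g\}$ has two elements. Cauchy--Davenport therefore gives
\[
 |\Sigma_0(U)|\geq\min\{p,|\Sigma_0(U')|+1\}.
\]
By induction, $|\Sigma_0(U')|\geq\min\{p,|U'|+1\}$. Combining the two bounds,
\[
 |\Sigma_0(U)|\geq\min\{p,\min\{p,|U'|+1\}+1\}\geq\min\{p,|U|+1\},
\]
since $|U|=|U'|+1$. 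This completes the induction.

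The same step can be seen without Cauchy--Davenport. In $C_p$ the only subgroups are $\{0\}$ and $C_p$, so a proper nonempty subset $A\subsetneq C_p$ has trivial stabilizer. Hence $g\notin\operatorname{st}(A)$, and so $A\cup(g+A)$ is strictly larger than $A$; this is the argument of Lemma~\ref{lem:2.5}. The set therefore grows by at least one element with each added term until it reaches all of $C_p$.

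The final assertion follows directly: if $|U|\geq p-1$, then $\min\{p,|U|+1\}=p$, so $\Sigma_0(U)=C_p$. There is no real obstacle here; the only care needed is in combining the nested minima, and this is routine.
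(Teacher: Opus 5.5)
Your proof is correct and is essentially the paper's approach: the paper states the lemma without proof as a consequence of the Cauchy--Davenport theorem, and your induction via $\Sigma_0(U)=\Sigma_0(U')+\{0,g\}$ is exactly how that deduction goes. Your alternative argument (a proper nonempty subset of $C_p$ has trivial stabilizer, so each nonzero term enlarges the set, as in Lemma~\ref{lem:2.5}) is also valid and avoids Cauchy--Davenport altogether.
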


\begin{lemma}\label{lem:2.7}
Let $H\leq G$, let $A$ be a sequence over $H$, and let $T$ be a sequence
over $G$. If $\varphi:G\longrightarrow G/H$ is the natural homomorphism,
then
\[
 |\Sigma_0(AT)|\geq |\Sigma_0(A)|\,|\Sigma_0(\varphi(T))|.
\]
\end{lemma}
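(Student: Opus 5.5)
The plan is to reduce the inequality to a statement about the quotient by grouping the terms of $T$ according to cosets of $H$. Write $\bar G=G/H$. For each $\bar x\in\Sigma_0(\varphi(T))$, choose a subsequence $T_{\bar x}\mid T$ with $\varphi(\sigma(T_{\bar x}))=\bar x$; for $\bar x=0$ take $T_0=1$. Put $t_{\bar x}=\sigma(T_{\bar x})\in G$.

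For each such $\bar x$ and each $a\in\Sigma_0(A)$, let $A_a\mid A$ be a subsequence with $\sigma(A_a)=a$. Since $A$ and $T$ are disjoint parts of the sequence $AT$, the product $A_aT_{\bar x}$ is a subsequence of $AT$, and therefore
\[
 a+t_{\bar x}\in\Sigma_0(AT).
\]
This gives the inclusion
\[
 \bigcup_{\bar x\in\Sigma_0(\varphi(T))}\bigl(t_{\bar x}+\Sigma_0(A)\bigr)\subseteq\Sigma_0(AT).
\]
Equivalently, it follows directly from $\Sigma_0(AT)=\Sigma_0(A)+\Sigma_0(T)$ after restricting $\Sigma_0(T)$ to the chosen representatives $t_{\bar x}$.

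Next we show that this union is disjoint and count its elements. Since $\Sigma_0(A)\subseteq H$, the translate $t_{\bar x}+\Sigma_0(A)$ lies entirely in the coset $t_{\bar x}+H$, and this coset is exactly $\bar x$. Distinct $\bar x$ therefore give translates in distinct cosets, so the translates are pairwise disjoint. Each translate has exactly $|\Sigma_0(A)|$ elements, so the union has $|\Sigma_0(A)|\,|\Sigma_0(\varphi(T))|$ elements. Combined with the inclusion above, this yields the claimed inequality.

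The one point that needs care is that the representatives $t_{\bar x}$ come from subsequences of $T$ alone, never using terms of $A$. This keeps $A_a$ and $T_{\bar x}$ disjoint, so their product is a legitimate subsequence of $AT$ and the counting argument goes through.
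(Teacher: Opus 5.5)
Your proof is correct and is essentially the paper's argument: for each $\gamma\in\Sigma_0(\varphi(T))$ you choose a subsequence $T_\gamma\mid T$ representing it, and observe that the translates $\sigma(T_\gamma)+\Sigma_0(A)\subseteq\Sigma_0(AT)$ lie in distinct cosets of $H$, so they are disjoint and each has $|\Sigma_0(A)|$ elements. Your remark that the representatives use only terms of $T$ is exactly the point that makes $A_aT_{\bar x}$ a legitimate subsequence of $AT$.
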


\begin{proof}
For each $\gamma\in\Sigma_0(\varphi(T))$, choose $T_\gamma\mid T$ with
$\varphi(\sigma(T_\gamma))=\gamma$. The sets
$\sigma(T_\gamma)+\Sigma_0(A)$ lie in distinct cosets of $H$ and are
contained in $\Sigma_0(AT)$.
\end{proof}

\begin{lemma}[{\cite[Lemma~2.4]{GHHQZ}}]\label{lem:2.8}
For every finite abelian group $A$, we have $c_0(A)\leq |A|$.
\end{lemma}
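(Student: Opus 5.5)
Throughout, $A$ is a finite abelian group, and we use the convention $c_0(\{0\})=1$. If $A$ is trivial, the statement holds by that convention. So let $A$ be nontrivial, and let $S$ be a regular sequence over $A$ with $|S|\geq |A|$.

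\emph{Step 1: $0$ is not a term, and $S$ has a zero-sum subsequence.} Regularity applied to the proper subgroup $H=\{0\}$ gives $|S_{\{0\}}|\leq 0$, so every term of $S$ is nonzero. Since $|S|\geq |A|\geq\mathsf D(A)$, the sequence $S$ has a nonempty zero-sum subsequence, hence $0\in\Sigma(S)$. It therefore suffices to show $\Sigma_0(S)=A$.

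\emph{Step 2: the greedy chain.} Write $S=g_1\cdot\ldots\cdot g_k$ with $k=|S|$, and put
\[
B_i=\Sigma_0(g_1\cdot\ldots\cdot g_i),\qquad B_0=\{0\}.
\]
Then $B_i=B_{i-1}\cup(g_i+B_{i-1})$, so $|B_i|\geq |B_{i-1}|$. Moreover, $|B_i|=|B_{i-1}|$ exactly when $g_i\in\operatorname{st}(B_{i-1})$. Let $K_{i}=\operatorname{st}(B_{i})$, a subgroup of $A$, and note that $K_{i-1}\subseteq K_{i}$.

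\emph{Step 3: counting stalls.} Call step $i$ a \emph{stall} if $|B_i|=|B_{i-1}|$. At a stall, $g_i\in K_{i-1}$. Let $K=K_k$ be the final stabilizer. Every stalled term lies in $K_{i-1}\subseteq K$, so the number of stalls is at most $|S_K|$. If $K\neq A$, regularity gives $|S_K|\leq |K|-1$. Each non-stall step increases $|B_i|$ by at least $1$, and $|B_0|=1$. Hence
\[
|B_k|\geq 1+k-(|K|-1)\geq |A|-|K|+2.
\]

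\emph{Step 4: contradiction unless $K=A$.} The set $B_k$ is a union of $K$-cosets. If $B_k\neq A$, then it misses at least one coset, so $|B_k|\leq |A|-|K|$. This contradicts Step 3. So either $K=A$, which forces $B_k=A$, or $B_k=A$ directly. In both cases $\Sigma_0(S)=A$, and with Step 1 this gives $\Sigma(S)=A$.

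The main point is Step 3. A stalled term lies in the stabilizer current at that moment, and stabilizers only grow along the chain, so all stalled terms sit in the single final subgroup $K$; this is what lets regularity control them. The case $K=A$ needs no count at all, since then $B_k$ is already all of $A$.
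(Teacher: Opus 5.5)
Your proof is correct. The paper does not prove this lemma itself; it only cites \cite[Lemma~2.4]{GHHQZ}. So there is no in-paper route to compare against, and your argument serves as a self-contained replacement.

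It uses only two facts recorded in Section~2: the bound $\mathsf D(A)\leq |A|$, and the monotonicity $\operatorname{st}(\Sigma_0(T))\subseteq\operatorname{st}(\Sigma_0(Tg))$. Its engine is the mechanism the paper itself uses in the proof of Lemma~\ref{lem:2.10}: adjoin terms one at a time, and each term enlarges the sumset unless it lies in the current stabilizer. It also does not depend on Lemma~\ref{lem:2.9}, which is derived from this lemma, so there is no circularity.

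Your count actually proves slightly more. The inequality $1+k-(|K|-1)>|A|-|K|$ only needs $k\geq |A|-1$, so every regular sequence of length $|A|-1$ already satisfies $\Sigma_0(S)=A$. The extra term is needed solely to force $0\in\Sigma(S)$ via the Davenport bound. The regular sequence $1^{[n-1]}$ over $C_n$ shows this cannot be improved: there $\Sigma_0(S)=C_n$ but $0\notin\Sigma(S)$.

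A small expository point: in Step~4, the case $K\neq A$ is impossible outright, since $B_k=A$ would force $\operatorname{st}(B_k)=A$. Your phrasing reaches the right conclusion but slightly obscures this.
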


\begin{lemma}\label{lem:2.9}
Let $W$ be a sequence of nonzero elements of a finite abelian group $A$.
If $\Sigma_0(W)$ has trivial stabilizer, then $W$ is regular over $A$.
\end{lemma}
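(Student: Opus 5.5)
We argue by contraposition: we show that if $W$ is not regular over $A$, then $\Sigma_0(W)$ has a nontrivial stabilizer. If $A$ is trivial there is nothing to prove, since then $W$ is empty (it has no nonzero terms) and the empty sequence is vacuously regular. So assume $W$ is not regular. Then there is a proper subgroup $H<A$ with $|W_H|\geq |H|$. The goal is to show that $\Sigma_0(W)$ is stable under all of $H$, that is, $H\subseteq\operatorname{st}(\Sigma_0(W))$. Since $W_H$ consists of nonzero elements and $|W_H|\geq|H|\geq 1$, the subgroup $H$ is nontrivial. Hence $\operatorname{st}(\Sigma_0(W))\neq\{0\}$, a contradiction.

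The key step is to show that $\Sigma_0(W_H)=H$ whenever a sequence $V$ over $H$ has $|V|\geq|H|\geq\mathsf D(H)$. First we use $\mathsf D(H)\leq |H|$. Taking any subsequence $V'\mid V$ of length $|H|-1$, we need $\Sigma_0(V')=H$; this is where the nonzero condition enters.

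A cleaner route, which we will follow, is the standard fact that a sequence $V$ over a group $H$ with $|\Sigma_0(V)|<|H|$ satisfies $|\Sigma_0(V)|\geq |V|+1$ only when "no term is redundant". We argue via the chain of partial sums. Write $V=h_1\cdots h_k$ with $k\geq|H|$, and let $B_j=\Sigma_0(h_1\cdots h_j)\subseteq H$. These sets are nested: $B_0=\{0\}$ and $B_j=B_{j-1}\cup(h_j+B_{j-1})$. If $B_j=B_{j-1}$, then $h_j+B_{j-1}\subseteq B_{j-1}$, so $h_j\in\operatorname{st}(B_{j-1})$. If instead the sizes strictly increase at every step, then $|B_k|\geq k+1>|H|$, which is impossible. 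So some step $j$ satisfies $0\neq h_j\in\operatorname{st}(B_{j-1})$.

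To transfer this to $\Sigma_0(W)$, order $W$ so that the terms of $W_H$ come first. Then $\Sigma_0(W)=B_{j-1}+\Sigma_0(W')$, where $W'$ is the remaining part of $W$. The stabilizer is monotone under sumsets (as recorded in the preliminaries, $\operatorname{st}(\Sigma_0(S))\subseteq\operatorname{st}(\Sigma_0(ST))$). Therefore $h_j\in\operatorname{st}(\Sigma_0(W))$, and since $h_j\neq 0$ this contradicts the trivial stabilizer.

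This argument in fact does not need the full subgroup stabilization; it only needs a nonzero element of the stabilizer. The main point to check is the counting step: the sets $B_j$ all lie in $H$, so $|B_j|\leq|H|$, while $k\geq |H|$ strict increases would force $|B_k|\geq|H|+1$. That is the only place where the violation of regularity, $|W_H|\geq|H|$, is used.
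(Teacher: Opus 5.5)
Your final argument, the chain $B_0\subseteq B_1\subseteq\cdots\subseteq B_k\subseteq H$, is correct, but it takes a different route from the paper. The paper chooses a proper subgroup $J$ of \emph{least} order with $|W_J|\geq|J|$, so that $W_J$ is regular over $J$. It then invokes Lemma~\ref{lem:2.8} (the cited bound $c_0(J)\leq|J|$ from \cite{GHHQZ}) to get $\Sigma_0(W_J)=J$, so the whole subgroup $J$ stabilizes $\Sigma_0(W)=J+\Sigma_0(WW_J^{-1})$. Your pigeonhole step is Lemma~\ref{lem:2.5} applied repeatedly, together with the monotonicity $\operatorname{st}(\Sigma_0(S))\subseteq\operatorname{st}(\Sigma_0(ST))$. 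It amounts to showing $|\Sigma_0(T)|\geq|T|+1$ for every $T\mid W$, and hence $|W_H|\leq|H|-1$ for \emph{every} subgroup $H$. This needs no minimality choice and no appeal to the external result on $c_0$. The paper's route exhibits a whole subgroup inside the stabilizer, but one nonzero element already suffices for the lemma, so your version is more elementary and self-contained at no cost.

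You should, however, delete the exploratory sentences. The announced goal $H\subseteq\operatorname{st}(\Sigma_0(W))$ is false for an arbitrary violating $H$, and so is the claim that $\Sigma_0(V)=H$ whenever $|V|\geq|H|$. For example, in $A=C_8$ take $H=\{0,2,4,6\}$ and $W=4^{[4]}$; then $\Sigma_0(W)=\{0,4\}$. This is precisely why the paper passes to a subgroup of least order. Since your final argument never uses these claims, the proof is complete.
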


\begin{proof}
Otherwise choose a proper subgroup $J<A$ of least order with
$|W_J|\geq |J|$. Then $J\neq\{0\}$ and $W_J$ is regular over $J$.
Lemma~\ref{lem:2.8} gives $\Sigma_0(W_J)=J$, so
$\Sigma_0(W)=J+\Sigma_0(WW_J^{-1})$ is stabilized by $J$, a contradiction.
\end{proof}

The next lemma allows us to retain a prescribed regular subsequence when
choosing a short subsequence with all possible subset sums.

\begin{lemma}\label{lem:2.10}
Let $X$ be a sequence over a finite abelian group $A$ with
$\Sigma_0(X)=A$. Suppose that $Z\mid X$ is regular over $A$ and
$|Z|\leq c_0(A)-1$. Then there is a subsequence $R\mid X$ such that
\[
 Z\mid R,\qquad |R|\leq c_0(A),\qquad \Sigma_0(R)=A.
\]
\end{lemma}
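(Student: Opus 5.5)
We prove Lemma~\ref{lem:2.10} by a minimality argument. Among all subsequences $R\mid X$ with $Z\mid R$ and $\Sigma_0(R)=A$, choose one of least length; such an $R$ exists because $R=X$ qualifies. It then suffices to show $|R|\leq c_0(A)$. Suppose instead that $|R|\geq c_0(A)+1$. We shall find a term $g$ of $RZ^{-1}$ such that $R'=Rg^{-1}$ still satisfies $\Sigma_0(R')=A$. Since $Z\mid R'$, this contradicts the minimality of $R$.

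First suppose $R$ is not regular over $A$. We do not apply Lemma~\ref{lem:2.2} blindly, because the regular subsequence it produces might delete terms of $Z$. Instead, choose a proper subgroup $J<A$ with $|R_J|\geq |J|$, taking $J$ of least order. Then $R_J$ is regular over $J$ by the minimality of $J$, and $J\neq\{0\}$ since any zero term of $R$ outside $Z$ could simply be removed. Because $Z$ is regular over $A$, we have $|Z_J|\leq |J|-1<|R_J|$, so $R_J$ contains a term $g$ that is not a term of $Z$. Lemma~\ref{lem:2.8} applied to $J$ shows that a regular sequence over $J$ of length at least $|J|$ has $\Sigma_0=J$. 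To use this after deleting $g$, we compare $|R_Jg^{-1}|$ with $c_0(J)$. If $|R_J|\geq |J|+1$, then $R_Jg^{-1}$ is regular over $J$ with length at least $|J|\geq c_0(J)$, so $\Sigma_0(R_Jg^{-1})=J$. Then
\[
\Sigma_0(R')=\Sigma_0(R_Jg^{-1})+\Sigma_0(RR_J^{-1})=J+\Sigma_0(RR_J^{-1})\supseteq \Sigma_0(R)=A,
\]
so $\Sigma_0(R')=A$.

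The main obstacle is the boundary case $|R_J|=|J|$. Here we use that $\Sigma_0(R_J)=J$ together with a counting trick. The sequence $R_Jg^{-1}$ of length $|J|-1$ gives $\Sigma_0(R_Jg^{-1})\cup(g+\Sigma_0(R_Jg^{-1}))=J$. If $\Sigma_0(R_Jg^{-1})\neq J$, we instead choose $J$ to be a minimal subgroup violating the strict inequality, so that $|R_J|\geq|J|$ is forced with some room. Failing that, we pick $g$ with $g\notin Z$ among the repeated or redundant elements.

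A cleaner route treats the regular and non-regular cases uniformly. If $R'=Rg^{-1}$ is regular over $A$ and $|R'|\geq c_0(A)$, then $\Sigma(R')=A$ by the definition of $c_0(A)$, so any $g\in RZ^{-1}$ works; since $|Z|\leq c_0(A)-1<|R|$, the sequence $RZ^{-1}$ is nonempty. If every such $R'$ fails to be regular, then a subgroup $J$ is overfull in $R'$, and we repeat the argument with $J$ and induction on $|A|$. We apply the lemma itself inductively inside $J$, with $Z_J$, which is regular over $J$ and has $|Z_J|\leq |J|-1$ (and we arrange $|Z_J|\leq c_0(J)-1$ by the minimal choice of $J$). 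This replaces $R_J$ by a subsequence of length at most $c_0(J)\leq |J|$ that contains $Z_J$ and has the same $\Sigma_0=J$. That shortening strictly decreases $|R|$ unless $|R_J|\leq c_0(J)$, which contradicts $J$ being overfull once $c_0(J)\leq |J|-1$ or after handling the cyclic case directly. We expect this bookkeeping in the boundary case to be the only delicate point.
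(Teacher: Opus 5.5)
Your overall framework is sound. You take $R$ of least length with $Z\mid R$ and $\Sigma_0(R)=A$, settle the regular case by the definition of $c_0(A)$, and in the non-regular case choose $J$ as a minimal overfull subgroup, so that $R_J$ is regular over $J$. Your argument for $|R_J|\geq |J|+1$ is correct. The gap is the boundary case $|R_J|=|J|$, which you identify but do not resolve.

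Lemma~\ref{lem:2.8} cannot handle this case. $c_0$ is defined through $\Sigma$, not $\Sigma_0$, and for cyclic $J$ one has $c_0(J)=|J|$. So the lemma says nothing about $\Sigma_0(R_Jg^{-1})$ when $|R_Jg^{-1}|=|J|-1$. Your ``cleaner route'' stalls at the same point: applying the lemma inductively inside $J$ returns a subsequence of length at most $c_0(J)=|J|=|R_J|$, so nothing is shortened. ``Handling the cyclic case directly'' is exactly the missing argument. Likewise, choosing $g$ ``among the repeated or redundant elements'' presupposes that a redundant term outside $Z$ exists, which is what has to be proved.

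The missing idea is a $\Sigma_0$-bound: if $W$ is regular over $J$ and $|W|=t\leq |J|-1$, then $|\Sigma_0(W)|\geq t+1$. To see it, let $H=\operatorname{st}(\Sigma_0(W))$ and $h=|H|$.
\begin{enumerate}
\item If $h\geq t+1$, the bound is clear, since $H\subseteq\Sigma_0(W)$.
\item Otherwise $H<J$, and regularity gives $|W_H|\leq h-1$. Kneser's theorem applied to the sets $\{0,w\}$, $w\mid W$, then gives $|\Sigma_0(W)|\geq (t+2-h)h\geq t+1$.
\end{enumerate}
With this bound your boundary case closes at once. $R_Jg^{-1}$ is regular over $J$ of length $|J|-1$, hence $\Sigma_0(R_Jg^{-1})=J$, and $g$ can be deleted.

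The paper uses the same bound but applies it to $Z_J$ rather than to $R_Jg^{-1}$. Start from $\Sigma_0(Z_J)$, which has at least $|Z_J|+1$ elements, and adjoin the $|R_J|-|Z_J|$ remaining terms of $R_J$ one at a time. Not every step can enlarge the set, because $|R_J|+1>|J|$. Hence some term outside $Z$ is redundant. This version works for any overfull $J$ and needs no minimality.
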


\begin{proof}
We delete terms outside $Z$, preserving the set of subset sums, until
the remaining sequence is regular. To make this more precise, let $Y$ be the
current sequence. If it is not regular, choose $J<A$ with
$|Y_J|\geq |J|$, and write
\[
 Y_J=Z_Jy_1\cdots y_m,\qquad t=|Z_J|\leq |J|-1.
\]
We first show that $|\Sigma_0(Z_J)|\geq t+1$. Put
$H=\operatorname{st}(\Sigma_0(Z_J))$ and $h=|H|$. Since
$H\subseteq\Sigma_0(Z_J)$, the assertion is immediate if $h\geq t+1$.
Otherwise $H<J$, and regularity gives $|Z_H|\leq h-1$. Theorem \ref{thm:2.1}
therefore yields
\[
 |\Sigma_0(Z_J)|\geq (t+2-h)h\geq t+1,
\]
where the last inequality is equivalent to
$(h-1)(t-h+1)\geq 0$.

Starting from $\Sigma_0(Z_J)$, adjoin $y_1,\ldots,y_m$ successively.
If every term enlarged the sumset, the final set would have at least
$t+1+m=|Y_J|+1>|J|$ elements. Hence one of these terms can be deleted
without changing $\Sigma_0(Y_J)$, and therefore without changing
$\Sigma_0(Y)$.

The process ends with a regular subsequence $Y'\mid X$ containing $Z$
and satisfying $\Sigma_0(Y')=A$. If $|Y'|\leq c_0(A)$, take $R=Y'$.
Otherwise take any $c_0(A)$ terms of $Y'$ that include $Z$; their sequence
is regular and is an additive basis of $A$.
\end{proof}

We need the following two facts concerning $C_p^2$. The equality for
$c_0(C_p^2)$ is due to Peng \cite[Theorem~2]{PengI}; the additional
estimate follows from \cite[Theorem~3.12]{GPZ2013} (see also
\cite[Lemma~2.4]{QuLi}).

\begin{theorem}\label{thm:2.11}
Let $p$ be prime. Then $c_0(C_p^2)=2p-1$. Moreover, if $Y$ is regular
over $C_p^2$ and $|Y|=2p-2$, then $|\Sigma_0(Y)|\geq p^2-1$.
\end{theorem}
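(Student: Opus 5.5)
The lower bound $c_0(C_p^2)\geq 2p-1$ is the case $G\cong C_p\oplus C_p$ of Lemma~\ref{lem:2.3}. For the upper bound it suffices to prove the ``moreover'' statement and then add one term. Indeed, let $S$ be regular of length $2p-1$ with $\Sigma(S)\neq G$. Since $\mathsf D(C_p^2)=2p-1$, Lemma~\ref{lem:2.4} gives trivial stabilizers and $0\in\Sigma(S)$. Remove one term $g$ to get $Y$ with $|Y|=2p-2$. Then $|\Sigma_0(Y)|\geq p^2-1$, and Lemma~\ref{lem:2.5} applied to $Y$ and $g$ gives $|\Sigma_0(S)|\geq p^2$, so $\Sigma_0(S)=G$. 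Because $0\in\Sigma(S)$, this forces $\Sigma(S)=G$, a contradiction. So everything reduces to the estimate $|\Sigma_0(Y)|\geq p^2-1$ for regular $Y$ of length $2p-2$.

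To prove the estimate, I decompose $Y$ along the $p+1$ subgroups of order $p$. Write $Y=\prod_i Y_{H_i}$, where $H_1,\dots,H_r$ are the lines that actually contain terms and $k_i=|Y_{H_i}|$. Regularity gives $1\leq k_i\leq p-1$ and $\sum k_i=2p-2$, and
\[
\Sigma_0(Y)=\Sigma_0(Y_{H_1})+\cdots+\Sigma_0(Y_{H_r}),
\]
where Lemma~\ref{lem:2.6} gives $|\Sigma_0(Y_{H_i})|\geq k_i+1$. Order the lines so that $k_1\geq k_2\geq\cdots$. The first two summands lie in complementary lines, so their sum is a direct sum of size $(k_1+1)(k_2+1)$. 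Alternatively, by Lemma~\ref{lem:2.7} with $H=H_1$, we have $|\Sigma_0(Y)|\geq (k_1+1)\,|\Sigma_0(\varphi(YY_{H_1}^{-1}))|$. Here the $2p-2-k_1\geq p-1$ images in $G/H_1\cong C_p$ are nonzero, so the second factor equals $p$ by Lemma~\ref{lem:2.6}, and $|\Sigma_0(Y)|\geq (k_1+1)p$. This already settles the case $k_1=p-1$, giving $|\Sigma_0(Y)|\geq p^2$. In general I would refine this coset count. For each coset $x+H_1$, I would bound $|\Sigma_0(Y)\cap(x+H_1)|$ from below by Cauchy--Davenport applied to $\Sigma_0(Y_{H_1})$ plus the $H_1$-components of the sums landing in that coset. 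The aim is to show that only one coset can fail to be full, and then only by a single element.

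The main obstacle is the ``spread out'' regime, where $k_1$ is small and there are many distinct directions, for instance all $k_i\leq 2$. Then neither the product bound nor the coset count reaches $p^2-1$. In this regime I would first try an incremental Kneser-type argument. Add the terms of $Y$ one at a time, and track the set $A$ of partial sums together with the number of $\langle g\rangle$-cosets that $A$ meets but does not fill, where $g$ is the next term. If $A$ has trivial stabilizer, the gain $|(A+g)\setminus A|$ is at least the number of partially filled $\langle g\rangle$-cosets. Once $|A|$ is at least about $p$ and $A$ is not a union of lines in the new direction, this number should be large. If this increment bound proves too weak, the fallback is the Dias da Silva--Hamidoune (Erdős--Heilbronn) bound for restricted sums of distinct elements, applied to the subsequence consisting of one term from each of many directions, combined with Lemma~\ref{lem:2.7}. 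This is essentially the route of \cite[Theorem~3.12]{GPZ2013}, and I expect the careful bookkeeping of this extremal case, which accounts for the missing element in $p^2-1$, to be the hardest step.
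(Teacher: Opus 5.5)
Your lower bound, your reduction of the equality to the estimate, and your treatment of the case $k_1=p-1$ are all correct. The reduction works because Lemma~\ref{lem:2.4} applies, as $2p-1=\max\{2p-2,\mathsf D(C_p^2)\}$, and Lemma~\ref{lem:2.5} then adds one element. The gap is that the estimate $|\Sigma_0(Y)|\geq p^2-1$ is not proved when every line carries at most $p-2$ terms; for that case you only list strategies.

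This is not a leftover technicality, because the estimate is \emph{equivalent} to Peng's theorem. Assume $c_0(C_p^2)=2p-1$, let $Y$ be regular with $|Y|=2p-2$, and suppose $u\neq v$ both lie outside $\Sigma_0(Y)$. Your case $k_1=p-1$ shows that no line carries $p-1$ terms of $Y$. Hence $Yg$ with $g=u-v$ is regular of length $2p-1$. But $\Sigma_0(Yg)=\Sigma_0(Y)\cup\bigl(g+\Sigma_0(Y)\bigr)$ does not contain $u$, since $v\notin\Sigma_0(Y)$, a contradiction. So your reduction moves the whole content of Olson's conjecture into the one step you leave open. The paper does not prove this theorem either: it quotes \cite[Theorem~2]{PengI} for the equality and \cite[Theorem~3.12]{GPZ2013} for the estimate, and the argument just given shows that citing Peng alone would suffice.

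None of the tools you sketch closes that step.
\begin{enumerate}
\item \emph{Refined coset count.} Cauchy--Davenport gives $|\Sigma_0(Y)\cap(x+H_1)|\geq\min\{p,k_1+|B_x|\}$, where $B_x$ is the set of subset sums of $YY_{H_1}^{-1}$ lying in $x+H_1$. You would therefore need $|B_x|\geq p-k_1$ in all but one coset. That means roughly $p(p-k_1)$ well-distributed subset sums from a sequence of length $2p-2-k_1$ with even smaller line multiplicities. In the spread-out regime this is the original problem again, and you formulate no induction hypothesis strong enough to handle it.
\item \emph{Incremental argument.} The gain from adjoining $g$ is guaranteed only to be the number of $\langle g\rangle$-cosets that $A$ meets without filling, which a priori can be $1$. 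Even a gain of $\lceil |A|/p\rceil$ at every step, which is what the mere size of $A$ forces, yields only $O(p)$ elements after $2p-2$ steps. Reaching $p^2-1$ requires showing that the partial sums meet almost all cosets of each new direction, and the proposal supplies no such structural information.
\item \emph{Dias da Silva--Hamidoune.} This is a bound in a group of prime order. Combined with Lemma~\ref{lem:2.7} for a line $H$, it can never give more than $p\,|\Sigma_0(Y_H)|$, which is small precisely in the spread-out regime.
\end{enumerate}
To complete the proof you must either cite Peng's theorem (which then yields both parts) or give a genuine argument for the case where all lines carry at most $p-2$ terms.
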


\begin{lemma}\label{lem:2.12}
If $Y$ is a sequence over $C_p^2$ with $\Sigma_0(Y)=C_p^2$, then it has
a subsequence $C$ with $\Sigma_0(C)=C_p^2$ such that either
$|C|\leq 2p-2$, or $|C|=2p-1$ and $C$ is regular.
\end{lemma}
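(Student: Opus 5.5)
The plan is to reduce to a minimal generating configuration and then to the $C_p$-case. First I would check that a subsequence $C$ with $\Sigma_0(C)=C_p^2$ always exists in the form required once $|Y|$ is large. By Lemma~\ref{lem:2.2} we may pass to a regular subsequence $Y'\mid Y$ with $\Sigma_0(Y')=\Sigma_0(Y)=C_p^2$; if $Y$ is already regular we take $Y'=Y$. Since $Y'$ is regular and $\Sigma_0(Y')=C_p^2$, two cases arise. If $|Y'|\geq 2p-1=c_0(C_p^2)$ (Theorem~\ref{thm:2.11}), then any $2p-1$ terms of $Y'$ form a regular sequence, because subsequences of regular sequences are regular, and that sequence is an additive basis. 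Taking such a subsequence gives $C$ with $|C|=2p-1$, $C$ regular and $\Sigma_0(C)=C_p^2$. If instead $|Y'|\leq 2p-2$, we simply take $C=Y'$.

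So the whole statement follows almost directly from Lemma~\ref{lem:2.2} and the equality $c_0(C_p^2)=2p-1$. The only point to verify carefully is that ``additive basis'' means $\Sigma(C)=G$, which implies $\Sigma_0(C)=G$. It is also worth recording that when $|Y'|\leq 2p-2$ nothing more is needed, because the lemma allows $|C|\leq 2p-2$ with no regularity requirement. I would also note the degenerate case $Y=1$: it is impossible, since $\Sigma_0(1)=\{0\}\neq C_p^2$.

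The one place needing care, and the expected (mild) obstacle, is the application of Lemma~\ref{lem:2.2}. That lemma produces a \emph{proper} regular subsequence only when $S$ is not regular, so I would phrase the argument as iterating it, or simply invoking it once. It gives a regular $T\mid Y$ with $\Sigma_0(T)=\Sigma_0(Y)$, and the length bound then comes from $c_0$ as above. Nothing from Kneser's theorem or the finer estimate $|\Sigma_0(Y)|\geq p^2-1$ in Theorem~\ref{thm:2.11} is needed here.
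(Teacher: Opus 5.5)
Your proposal is correct and is essentially the paper's own proof: pass to a regular subsequence with the same $\Sigma_0$ via Lemma~\ref{lem:2.2}, and if it has more than $2p-2$ terms, take any $2p-1$ of them, which are regular, and apply $c_0(C_p^2)=2p-1$ from Theorem~\ref{thm:2.11}. Your extra care in treating the case where $Y$ is already regular separately is appropriate, since Lemma~\ref{lem:2.2} applies only to non-regular sequences.
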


\begin{proof}
By Lemma~\ref{lem:2.2}, we may replace $Y$ by a regular subsequence with
the same subset sums. If its length exceeds $2p-2$, take any $2p-1$ of
its terms and apply Theorem~\ref{thm:2.11}.
\end{proof}

\section{A reduction argument}\label{sec:quotients}

Let $q$ be prime and let $\pi:G\longrightarrow C_q^2$ be surjective.
For every subgroup $M\leq C_q^2$ of order $q$, let $H_M=\pi^{-1}(M)$.
We first recall a consequence of Hall's marriage theorem
\cite{Hall}. In the form needed here, that theorem states that a finite
family of sets $E_1,\ldots,E_t$ has distinct representatives
$x_i\in E_i$ if and only if
\[
 \left|\bigcup_{i\in I}E_i\right|\geq |I|
 \quad\text{for every }I\subseteq\{1,\ldots,t\}.
\]
When this theorem is applied to a sequence, different occurrences of the
same group element are regarded as distinct choices.

\begin{lemma}\label{lem:3.1}
Let $\mathcal L$ be a collection of subgroups of $C_q^2$ of order $q$,
and let $\ell\geq 1$. Suppose that, for every nonempty
$\mathcal J\subseteq\mathcal L$, at least $\ell|\mathcal J|$ terms of a
sequence $S$ lie outside at least one of the subgroups $H_M$,
$M\in\mathcal J$. Then there are pairwise disjoint subsequences
$T_M\mid S$, $M\in\mathcal L$, such that $|T_M|=\ell$ and all terms of
$T_M$ lie outside $H_M$.
\end{lemma}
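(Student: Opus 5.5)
This is a direct application of Hall's theorem, with each subgroup $M\in\mathcal L$ demanding $\ell$ distinct terms of $S$. Let $S=g_1\cdot\ldots\cdot g_k$, and regard the indices $1,\ldots,k$ as distinct objects, so that repeated values of $S$ count as different choices. For each $M\in\mathcal L$ introduce $\ell$ copies $(M,1),\ldots,(M,\ell)$. To every copy $(M,j)$ attach the same set
\[
 E_{M,j}=\{i\in\{1,\ldots,k\}: g_i\notin H_M\}.
\]
A system of distinct representatives $x_{M,j}\in E_{M,j}$ then gives pairwise distinct indices. Putting $T_M=\prod_{j=1}^{\ell}g_{x_{M,j}}$, we get subsequences with $|T_M|=\ell$, with all terms outside $H_M$, and pairwise disjoint in $S$ because the indices are distinct. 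So it suffices to verify Hall's condition for the family $\{E_{M,j}\}$.

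Let $I$ be a nonempty set of copies, and let $\mathcal J\subseteq\mathcal L$ be the set of subgroups $M$ for which some copy $(M,j)$ lies in $I$. Then $\mathcal J$ is nonempty and $|I|\leq \ell|\mathcal J|$. The union $\bigcup_{(M,j)\in I}E_{M,j}$ equals $\bigcup_{M\in\mathcal J}E_{M,1}$, which is the set of indices $i$ such that $g_i$ lies outside at least one $H_M$ with $M\in\mathcal J$. By hypothesis this set has at least $\ell|\mathcal J|\geq |I|$ elements. Hall's theorem therefore applies, and the lemma follows.

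There is no real obstacle here. The only points needing care are reading the hypothesis as a count of terms counted with multiplicity, that is, of indices, and noting that the union over $I$ depends only on $\mathcal J$, which justifies replacing $|I|$ by the larger quantity $\ell|\mathcal J|$. If $\mathcal L$ is empty the statement is vacuous.
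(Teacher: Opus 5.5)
Your proposal is correct and follows essentially the same route as the paper: $\ell$ copies of the set of occurrences outside each $H_M$, with Hall's condition checked via $|I|\leq \ell|\mathcal J|$ and the hypothesis bounding the union from below. Your explicit indexing of occurrences and your remark that the union depends only on $\mathcal J$ spell out exactly what the paper's three-line argument leaves implicit.
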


\begin{proof}
For each $M\in\mathcal L$, take $\ell$ copies of the set of occurrences
in $S$ outside $H_M$. A collection of copies involving precisely the
members of $\mathcal J$ has at most $\ell|\mathcal J|$ members, while
their union has at least this many occurrences by hypothesis. Hall's
theorem gives the required choices.
\end{proof}

Now we present the setting we use throughout this section. Suppose that $p<q$ are odd primes, that $p$ is the smallest prime
divisor of $|G|$, and that $G$ has a quotient $C_q^2$. Write
\[
 \pi:G\longrightarrow C_q^2,\qquad K=\ker\pi,\qquad
 |G|=pq^2h,\qquad |K|=ph.
\]
Assume that $m(G)=|G|/p+p-2$, and put
\[
 L=q^2h+p-2,\qquad N=|G|-1-L=(p-1)q^2h-p+1.
\]

\begin{theorem}\label{thm:3.2}
In the preceding setting, suppose that integers $r,\ell$ satisfy
$1\leq r\leq q+1$, $1\leq\ell\leq q-1$, and
\begin{align}
 \frac{r\ell L}{q+1}&\geq N,\label{eq:3.1}\\
 \ell(L-r\ell)&\geq N,\label{eq:3.2}\\
 h(q^2-p)+p-1&\geq r\ell.\label{eq:3.3}
\end{align}
Then $c_0(G)=L$.
\end{theorem}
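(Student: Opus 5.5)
Since $m(G)=|G|/p+p-2=q^2h+p-2=L$, Lemma~\ref{lem:2.3} already gives $c_0(G)\geq L$, so only the upper bound needs proof. We take a regular sequence $S$ over $G$ with $|S|\geq L$ and, aiming for a contradiction, assume $\Sigma(S)\neq G$. First we check that $L\geq\mathsf D(G)$ (from \eqref{eq:Davenport}, $\mathsf D(G)\leq m(G)=L$). Lemma~\ref{lem:2.4} then applies: every nonempty $T\mid S$ has $\operatorname{st}(\Sigma_0(T))=\{0\}$ and $e(T)\geq 0$, and $e$ does not decrease when terms are added (Lemma~\ref{lem:2.5}). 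Since $\Sigma(S)\neq G$ we have $|\Sigma_0(S)|\leq |G|-1$, so $e(S)\leq |G|-2-|S|\leq N-1$. The strategy is to build pairwise disjoint subsequences $T_1,\dots,T_r$ of $S$, each of length at most $\ell$ and each with large excess, and then to add the excesses using Kneser's theorem (Theorem~\ref{thm:2.1}) with trivial stabilizer:
\[
 |\Sigma_0(S)|\geq 1+\sum_i(|\Sigma_0(T_i)|-1)+|S|-\textstyle\sum|T_i| .
\]
This shows that $e(S)\geq\sum_i e(T_i)$. If this sum is at least $N$, we have a contradiction.

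To produce the large excess, we use the quotient $\pi:G\to C_q^2$. For each subgroup $M\leq C_q^2$ of order $q$ (there are $q+1$ of them), the preimage $H_M$ has order $pqh$, and regularity gives $|S_{H_M}|\leq pqh-1$. Hence at least $L-pqh+1$ terms of $S$ lie outside $H_M$, and such terms have nonzero image in $C_q^2/M\cong C_q$. The plan is to look at the terms of $S$ lying in a fixed subgroup $H_M$. Via the Davenport constant and regularity, these should generate a set $\Sigma_0(S_{H_M})$ of size comparable to $|H_M|$. Lemma~\ref{lem:2.7} then says that adding $\ell$ terms outside $H_M$ multiplies this size by $|\Sigma_0(\varphi(T_M))|\geq \ell+1$, using Lemma~\ref{lem:2.6} in $C_q$ with $\ell\leq q-1$. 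This is the source of a gain that is roughly proportional to $\ell\cdot|\Sigma_0(S_{H_M})|$. The disjoint choices for $r$ different subgroups $M$ come from Lemma~\ref{lem:3.1}. Its Hall-type hypothesis is checked by counting: the terms outside the union of $|\mathcal J|$ preimages number at least $L-|\bigcup H_M|$ or so, and condition \eqref{eq:3.3}, $h(q^2-p)+p-1\geq r\ell$, is exactly the kind of inequality that guarantees $r\ell$ available terms. (Note that $L-(pqh-1)$ and similar counts appear in this check.)

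The two numerical conditions correspond to two cases. In the first case, some $H_M$ carries many terms of $S$. Then one block $T_M$ together with $S_{H_M}$ yields roughly $\ell(L-r\ell)\geq N$ via \eqref{eq:3.2}: after reserving the $r\ell$ chosen terms, the rest of $S$ has length at least $L-r\ell$, and Lemma~\ref{lem:2.7} multiplies by $\ell+1$. In the second case, the terms are spread out. We average over the $q+1$ subgroups $M$; each term outside $\ker$-direction lies in exactly one $H_M$ modulo $K$. The total gain from the $r$ best subgroups is then at least $\frac{r}{q+1}\cdot \ell L\geq N$ via \eqref{eq:3.1}.

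I expect the main obstacle to be making the per-block excess estimate rigorous. One must show that $e(S_{H_M}T_M)$, or at least the increment it causes inside the full sumset, is at least about $\ell|S_{H_M}|$, and that these increments from disjoint blocks really add up under Kneser's theorem rather than overlapping. My first attempt would be to apply Lemma~\ref{lem:2.7} with $A=S_{H_M}$ and to use $|\Sigma_0(A)|\geq |A|+1$ (Lemma~\ref{lem:2.4}). This gives $e\geq \ell|A|$. The blocks would then be combined by successive application of Kneser with trivial stabilizer, taking the $H_M$-parts disjoint, since distinct $H_M$ intersect only in $K$, whose terms must be assigned to one block.
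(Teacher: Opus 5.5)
\textbf{There is a genuine gap.} Your ingredients are the right ones, and your per-block estimate is exactly the paper's. Lemmas~\ref{lem:2.4}, \ref{lem:2.6} and~\ref{lem:2.7} give $|\Sigma_0(A_MT_M)|-1\geq|A_MT_M|+\ell|A_M|$, and these are summed by Kneser with trivial stabilizer. That is not where the difficulty lies. What is missing is the bookkeeping that makes $a=\sum_i|A_i|$ large, and your two cases do not supply it.

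\emph{The case you attach to \eqref{eq:3.2}.} It cannot work as stated. A single block has $A_M\subseteq S_{H_M}$, so $|A_M|\leq pqh-1$, and one heavy $H_M$ never yields $\ell(L-r\ell)$. The count $L-r\ell$ is the right one. However, it is usable only if every non-reserved term lies in one of the selected $H_{M_i}$, so that it belongs to some block. Otherwise Lemma~\ref{lem:2.7} does not apply to ``the rest of $S$''.

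\emph{The averaging case.} You do not control where the $T_i$ come from. Lemma~\ref{lem:3.1} may take the $r\ell$ reserved terms out of the selected classes $P_{M_1},\dots,P_{M_r}$. Meanwhile, terms of unselected classes that land in no $T_i$ contribute nothing to the excess. You then only get $\ell a\geq\ell\bigl(\frac{rL}{q+1}-r\ell\bigr)$, and \eqref{eq:3.1} has far too little slack for this. For example, with $r=p$, $\ell=q-1$, $q=2p-1$, the slack in \eqref{eq:3.1} is only $(p-1)^2$.

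\emph{The missing idea} is a dichotomy on the unselected classes, with the choice of the $T_i$ coordinated to it.
\begin{enumerate}
\item Take $M_1,\dots,M_r$ maximizing $\sum_i d_{M_i}$, so that $\sum_i d_{M_i}\geq r(L-z)/(q+1)$.
\item A term whose image lies in $M\setminus\{0\}$ with $M\notin\{M_1,\dots,M_r\}$ lies outside every $H_{M_i}$. So it can fill any slot of any $T_i$.
\item If the unselected classes contain at least $r\ell$ terms, draw all $T_i$ from them. Then each $A_i$ keeps all of $P_{M_i}$, with $S_K$ distributed among the $A_i$, so $a\geq z+r(L-z)/(q+1)\geq rL/(q+1)$. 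Now \eqref{eq:3.1} applies.
\item Otherwise, starting from a Hall choice, swap unused unselected terms in for selected ones until every unselected term is used. Then every remaining term lies in some $H_{M_i}$, so $a=L-r\ell$ exactly. Now \eqref{eq:3.2} applies.
\end{enumerate}

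\emph{The Hall count also needs correcting.} For $|\mathcal J|\geq2$, the relevant terms are those outside at least one $H_M$, i.e.\ outside the intersection $K$, not outside the union. Their number is at least $L-(ph-1)=h(q^2-p)+p-1$, and this is where \eqref{eq:3.3} enters. For a single $M$ you need $L-(pqh-1)\geq q-1\geq\ell$.

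Finally, no estimate of $|\Sigma_0(S_{H_M})|$ ``comparable to $|H_M|$'' is needed or available. Only $|\Sigma_0(A_i)|\geq|A_i|+1$ is used.
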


\begin{proof}
By Lemma~\ref{lem:2.3}, it suffices to prove the upper bound. Suppose
that $S$ is regular, $|S|=L$, and $\Sigma(S)\neq G$. By
\eqref{eq:Davenport} and Lemma~\ref{lem:2.4}, every $\Sigma_0(T)$,
$T\mid S$, has trivial stabilizer.

For each subgroup $M\leq C_q^2$ of order $q$, let $P_M$ consist of the
terms of $S$ whose images lie in $M\setminus\{0\}$. Put
$d_M=|P_M|$ and $z=|S_K|$. Then
\[
 \sum_Md_M=L-z.
\]
Regularity in $H_M$ gives $z+d_M\leq pqh-1$, and hence
\begin{equation}\label{eq:3.4}
 |SS_{H_M}^{-1}|\geq qh(q-p)+p-1\geq q-1.
\end{equation}
Choose $r$ subgroups $M_1,\ldots,M_r$ for which the sum of the $d_M$
is largest. Thus
\begin{equation}\label{eq:3.5}
 \sum_{i=1}^rd_{M_i}\geq\frac{r(L-z)}{q+1}.
\end{equation}

We apply Lemma~\ref{lem:3.1} to these $r$ subgroups. For a single
subgroup, \eqref{eq:3.4} provides at least $\ell$ allowable terms.
For two or more distinct subgroups, their intersection is $\{0\}$, so
every term outside $K$ lies outside at least one of their inverse images.
Moreover,
\[
 |S|-|S_K|\geq L-(ph-1)=h(q^2-p)+p-1\geq r\ell.
\]
We obtain disjoint subsequences $T_1,\ldots,T_r$, each of length $\ell$,
whose terms lie outside $H_{M_i}$, respectively.

We may make this choice with the following additional property. If the
subsequences $P_M$ with $M\notin\{M_1,\ldots,M_r\}$ contain at least
$r\ell$ terms altogether, choose all the $T_i$ from those terms.
If they contain fewer than $r\ell$ terms, choose the $T_i$ so that all
of them are used. Indeed, starting from any choice supplied by Hall's
theorem, an unused term from an unselected subgroup can replace a
selected term from one of $P_{M_1},\ldots,P_{M_r}$: the new term lies
outside every $H_{M_i}$.

After removing the $T_i$, let $A_i$ contain the remaining terms of
$P_{M_i}$, and distribute the terms of $S_K$ among the $A_i$. Each
$A_i$ is supported in $H_{M_i}$. Put $a=\sum_{i=1}^r|A_i|$. In the
first case, \eqref{eq:3.5} gives
\[
 a\geq z+\frac{r(L-z)}{q+1}\geq\frac{rL}{q+1}.
\]
In the second case, every remaining term belongs to an $A_i$, so
$a=L-r\ell$. Thus \eqref{eq:3.1} and \eqref{eq:3.2} give
$\ell a\geq N$ in both cases.

The image of $T_i$ in $G/H_{M_i}\cong C_q$ consists of $\ell$ nonzero
terms. Lemmas~\ref{lem:2.4}, \ref{lem:2.6}, and \ref{lem:2.7} yield
\[
 |\Sigma_0(A_iT_i)|\geq(\ell+1)|\Sigma_0(A_i)|
 \geq(\ell+1)(|A_i|+1),
\]
and therefore
\[
 |\Sigma_0(A_iT_i)|-1\geq |A_iT_i|+\ell|A_i|.
\]
Now, we apply Theorem \ref{thm:2.1}.
Since $\Sigma_0(S)$ has trivial stabilizer, we obtain
\[
 |\Sigma_0(S)|\geq 1+|S|+\ell a
 \geq 1+L+N=|G|,
\]
a contradiction.
\end{proof}

\begin{theorem}\label{thm:3.3}
In the preceding setting, if either $q\geq 2p-1$ or $h\geq p$, then
\[
 c_0(G)=|G|/p+p-2.
\]
\end{theorem}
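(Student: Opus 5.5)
The plan is to apply Theorem~\ref{thm:3.2} directly: exhibit integers $r,\ell$ with $1\le r\le q+1$, $1\le \ell\le q-1$ satisfying \eqref{eq:3.1}--\eqref{eq:3.3}. Since $L=q^2h+p-2$ and $N=(p-1)q^2h-p+1<(p-1)q^2h$, the condition \eqref{eq:3.1} is essentially $r\ell\ge (p-1)(q+1)$ up to a small correction. The condition \eqref{eq:3.2} is essentially $\ell\ge p-1$ once $r\ell$ is small compared with $L$. So the natural choice is
\[
 \ell=p,\qquad r=p-1 \quad(\text{or } r=p),
\]
or more generally $\ell$ slightly above $p-1$ and $r\ell$ roughly $(p-1)(q+1)$. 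Concretely, I would first try $r=q+1$ and $\ell=p-1$. Then \eqref{eq:3.1} reads $(p-1)L\ge N$, which holds since $(p-1)(q^2h+p-2)\ge (p-1)q^2h-p+1$. The bounds on $r,\ell$ hold because $p-1\le q-1$.

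The remaining conditions are \eqref{eq:3.2} and \eqref{eq:3.3}, and this is where the hypothesis $q\ge 2p-1$ or $h\ge p$ must enter. With $\ell=p-1$, condition \eqref{eq:3.2} becomes $(p-1)(L-(q+1)(p-1))\ge N$. Expanding, this is
\[
 (p-1)(p-2)-(p-1)^2(q+1)\ge -(p-1),
\]
which is false. So $\ell=p-1$ is too small and I would take $\ell=p$. Then \eqref{eq:3.2} needs $p(q^2h+p-2-rp)\ge (p-1)q^2h-p+1$, i.e.\ $q^2h+p^2-p+1\ge rp^2$. Condition \eqref{eq:3.1} needs $rp(q^2h+p-2)\ge (q+1)((p-1)q^2h-p+1)$, which roughly asks for $rp\ge (p-1)(q+1)$. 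So I would take
\[
 r=\Bigl\lceil \tfrac{(p-1)(q+1)}{p}\Bigr\rceil\le q+1 .
\]
This $r$ is about $q+1-(q+1)/p$. For that choice, \eqref{eq:3.2} asks roughly that $q^2h\ge p(p-1)(q+1)$, and \eqref{eq:3.3} asks that $h(q^2-p)+p-1\ge rp\approx (p-1)(q+1)$.

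Condition \eqref{eq:3.3} is easy: $h(q^2-p)\ge q^2-p\ge (p-1)(q+1)$ once $q\ge p+1$, with a little slack for the ceiling. Condition \eqref{eq:3.2} is the main obstacle. If $h\ge p$, then $q^2h\ge pq^2\ge p(p-1)(q+1)$ because $q^2\ge (p-1)(q+1)$. If $h$ is small (e.g.\ $h=1$) but $q\ge 2p-1$, then $q^2\ge (2p-1)q$, and I need this to be at least about $p(p-1)(q+1)$. That fails for large $p$ relative to $q$, so the crude choice $\ell=p$ may be too lossy when $h<p$. In that case I would instead let $\ell$ range up to $q-1$ and balance the two conditions. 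Multiplying \eqref{eq:3.1} and \eqref{eq:3.2} suggests taking $r\ell\approx L/2$ and $\ell\approx 2N/L\approx 2(p-1)$. Then $r\approx q^2h/(4(p-1))$, and the constraints $r\le q+1$ and $\ell\le q-1$ are exactly where $q\ge 2p-1$ is used (it gives $\ell=2p-2\le q-1$). So in the case $h<p$ I would set $\ell=2p-2$ and $r=q+1$, and verify \eqref{eq:3.1}--\eqref{eq:3.3} carefully, handling the ceiling and the boundary cases $q=2p-1$ and $h=1$ by explicit computation. For $h\ge p$ I would use the $\ell=p$ choice above.
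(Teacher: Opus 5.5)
\textbf{Verdict: genuine gap.} Your $h\geq p$ case is correct, but your choice for the remaining case does not satisfy \eqref{eq:3.2}.

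\textbf{The case $h\geq p$ works.} With $\ell=p$ and $r=\lceil (p-1)(q+1)/p\rceil$:
\begin{itemize}
\item \eqref{eq:3.1} follows from $r\ell\geq(p-1)(q+1)$ together with $(p-1)L\geq N$.
\item \eqref{eq:3.2} is equivalent to $q^2h+p^2-p-1\geq rp^2$. Your version has $+1$ in place of $-1$, but there is slack. Since $rp\leq(p-1)(q+2)$, it follows from $h\geq p$ and $q^2\geq(p-1)(q+2)$.
\item \eqref{eq:3.3} follows from $q^2-1\geq(p-1)(q+2)$.
\end{itemize}
This differs from the paper, which uses $r=q+1$, $\ell=q-1$ here, and only for $q<2p-1$.

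\textbf{The case $h<p$ fails.} Every prime divisor of $h$ is at least $p$, so ``$h<p$'' means $h=1$. Your choice $\ell=2p-2$, $r=q+1$ fails there. A direct computation gives
\[
 \ell(L-r\ell)-N=(p-1)\bigl(q^2h+2p-3-4(p-1)(q+1)\bigr).
\]
For $h=1$ this is negative for every $q$ with $2p-1\leq q<4p-3$. For example, $p=3$, $q=5$, $h=1$ gives $L=26$, $N=48$, $r\ell=24$, and $\ell(L-r\ell)=8<48$. The boundary case $q=2p-1$ fails for every $p$. So no careful verification of ceilings or boundary cases can rescue \eqref{eq:3.2}.

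The choice also contradicts your own balancing heuristic. That heuristic puts $r\ell$ near $L/2$, that is, $r\approx q^2/(4(p-1))$. Taking $r=q+1$ instead makes $r\ell=2(p-1)(q+1)$, which is comparable to $L\approx q^2$ when $q$ is near $2p$, and leaves almost nothing in $L-r\ell$.

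\textbf{The missing idea.} For \eqref{eq:3.2}, push $\ell$ to its maximum and make $r\ell$ only as large as \eqref{eq:3.1} forces. The paper takes $\ell=q-1$ and $r=p$. Then:
\begin{itemize}
\item The coefficient of $q^2h$ in $r\ell L-(q+1)N$ is $p(q-1)-(p-1)(q+1)=q+1-2p$. This is exactly where the hypothesis $q\geq 2p-1$ enters, and it gives \eqref{eq:3.1}.
\item \eqref{eq:3.2} reduces to $q^2h(q-p)-p(q-1)^2+(q-1)(p-2)+p-1\geq 0$. This follows from $q^2(q-p)-p(q-1)^2\geq(q-1)/2$, which holds for $p\leq(q+1)/2$.
\item \eqref{eq:3.3} becomes $h(q^2-p)+p-1\geq q^2-1\geq p(q-1)$, which is immediate.
\end{itemize}
This choice works for every $h$, including $h=1$. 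Combined with your $h\geq p$ argument, it would complete the proof.
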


\begin{proof}
If $q\geq 2p-1$, take $r=p$ and $\ell=q-1$ in
Theorem~\ref{thm:3.2}. It is not hard to verify
\eqref{eq:3.1}--\eqref{eq:3.3}. For the second inequality, one may use
\[
 q^2(q-p)-p(q-1)^2\geq\frac{q-1}{2},
\]
which follows from $p\leq(q+1)/2$.

Suppose next that $p<q<2p-1$ and $h\geq p$. Take
$r=q+1$ and $\ell=q-1$. The first and third inequalities in
Theorem~\ref{thm:3.2} follow immediately by substitution. For the second,
\[
 (q-1)\bigl(L-(q+1)(q-1)\bigr)-N
 =q\bigl(hq(q-p)-q^2+q+p-1\bigr)>0,
\]
because $h(q-p)\geq 2p>q$.
\end{proof}

We shall also need a bound obtained from a quotient associated with the
smallest prime divisor.

\begin{theorem}\label{thm:3.4}
Let $A$ be a finite abelian group of odd order, and let $p$ be the
smallest prime divisor of $|A|$. If $A$ has a quotient $C_p^2$, then
\[
 c_0(A)\leq |A|/p+p^2-p-1.
\]
\end{theorem}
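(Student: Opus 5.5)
We would prove Theorem~\ref{thm:3.4} by running the argument of Theorem~\ref{thm:3.2} again, now with the quotient $C_p^2$ in place of $C_q^2$. We take all $p+1$ subgroups of order $p$, so $r=p+1$, and we use $\ell=p-1$ terms outside each of them.

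\textbf{Setup.} Let $\pi:A\to C_p^2$ be surjective, $K=\ker\pi$, and for each order-$p$ subgroup $M\le C_p^2$ put $H_M=\pi^{-1}(M)$; it has index $p$. Let
\[
L=|A|/p+p^2-p-1,
\]
and suppose $S$ is regular with $|S|=L$ and $\Sigma(S)\ne A$.

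\textbf{Step 1: apply Lemma~\ref{lem:2.4}.} We need $L\ge\max\{|A|/p+p-2,\mathsf D(A)\}$.
\begin{itemize}[leftmargin=2em]
\item $L\ge |A|/p+p-2$ because $p^2-2p+1\ge 0$.
\item By \eqref{eq:Davenport}, $\mathsf D(A)\le m(A)$. In every case of \eqref{eq:m}, $m(A)\le |A|/p+2p-3\le L$, using $p^2-3p+2\ge 0$. The cyclic case does not occur, since $A$ has a quotient $C_p^2$.
\end{itemize}
Hence every $\Sigma_0(T)$ with $T\mid S$ has trivial stabilizer and satisfies $e(T)\ge 0$.

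\textbf{Step 2: choose the subsequences $T_M$.}
\begin{itemize}[leftmargin=2em]
\item For each $M$, regularity in $H_M$ gives $|S_{H_M}|\le |A|/p-1$. So at least $L-|A|/p+1=p^2-p\ge p-1$ terms lie outside $H_M$.
\item Any two distinct $M$ meet only in $\{0\}$. So every term outside $K$ lies outside at least one $H_M$ in any family of two or more subgroups.
\item There are enough such terms: $|S|-|S_K|\ge L-|A|/p^2+1\ge p^2-1=r\ell$.
\end{itemize}
Lemma~\ref{lem:3.1} (Hall's theorem) then gives disjoint $T_M$ with $|T_M|=p-1$, each consisting of terms outside $H_M$. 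Every subgroup $M$ is selected, so there are no unselected subgroups, and we are automatically in the second case of the proof of Theorem~\ref{thm:3.2}.

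\textbf{Step 3: the estimate.} Let $P_M$ be the terms of $S$ whose image lies in $M\setminus\{0\}$. Let $A_M$ consist of the remaining terms of $P_M$, and distribute the terms of $S_K$ among the $A_M$. Then every term of $S$ lies in exactly one $A_MT_M$, and $a=\sum_M|A_M|=L-(p^2-1)$.
\begin{itemize}[leftmargin=2em]
\item The image of $T_M$ in $A/H_M\cong C_p$ consists of $p-1$ nonzero terms, so Lemma~\ref{lem:2.6} gives $|\Sigma_0|=p$ there.
\item Lemma~\ref{lem:2.7} then gives $|\Sigma_0(A_MT_M)|\ge p\,|\Sigma_0(A_M)|$.
\item Lemma~\ref{lem:2.4} gives $|\Sigma_0(A_M)|\ge |A_M|+1$, hence $|\Sigma_0(A_MT_M)|-1\ge |A_MT_M|+(p-1)|A_M|$.
\end{itemize}
Since $\Sigma_0(S)$ has trivial stabilizer, Kneser's Theorem~\ref{thm:2.1} yields
\[
|\Sigma_0(S)|\ \ge\ 1+L+(p-1)(L-p^2+1)=1+L+(p-1)\Bigl(\frac{|A|}{p}-p\Bigr).
\]

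\textbf{Step 4: conclude.} We need the right-hand side to be at least $|A|$, that is, $(p-1)(|A|/p-p)\ge |A|-1-L$. We have
\[
|A|-1-L=(p-1)\frac{|A|}{p}-p^2+p=(p-1)\Bigl(\frac{|A|}{p}-p\Bigr),
\]
so the inequality holds with equality. Thus $\Sigma_0(S)=A$. Since $0\in\Sigma(S)$ by Lemma~\ref{lem:2.4}, this gives $\Sigma(S)=A$, a contradiction.

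\textbf{Main obstacle.} The delicate point is that the final count is exactly tight. So every ingredient must be applied without loss. In particular:
\begin{itemize}[leftmargin=2em]
\item Lemma~\ref{lem:2.6} must give the full factor $p=\ell+1$, which requires exactly $p-1$ nonzero terms in the quotient.
\item The Hall-theorem selection must really produce $p+1$ disjoint blocks of size $p-1$. This needs the counting in Step~2 to hold for every nonempty family of subgroups, including the single-subgroup case.
\end{itemize}
We would check these boundary counts carefully first.
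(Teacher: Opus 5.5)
Your proposal is correct and follows the paper's proof essentially step for step. Like the paper, you take all $p+1$ subgroups of order $p$, extract disjoint blocks $T_M$ of length $p-1$ via Lemma~\ref{lem:3.1}, bound each factor with Lemmas~\ref{lem:2.4}, \ref{lem:2.6} and~\ref{lem:2.7}, and apply Kneser's theorem to obtain the exactly tight bound $|\Sigma_0(S)|\geq 1+L+(p-1)\bigl(|A|/p-p\bigr)=|A|$.
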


\begin{proof}
Let $\pi:A\longrightarrow C_p^2$ be surjective, put $K=\ker\pi$, and
let $L=p|K|+p^2-p-1$. By \eqref{eq:m}, $L\geq m(A)$. Suppose that
$S$ is regular, $|S|=L$, and $\Sigma(S)\neq A$. Lemma~\ref{lem:2.4}
applies to every subsequence of $S$.

For each subgroup $M\leq C_p^2$ of order $p$, put
$H_M=\pi^{-1}(M)$. Regularity gives at least
\[
 L-(p|K|-1)=p(p-1)
\]
terms outside $H_M$. For two or more distinct $M$, every term outside
$K$ lies outside at least one of the corresponding $H_M$, and
\[
 |S|-|S_K|\geq L-(|K|-1)
 =(p-1)(|K|+p)\geq(p+1)(p-1).
\]
Lemma~\ref{lem:3.1} therefore gives disjoint subsequences $T_M$, one
for each of the $p+1$ subgroups $M$, such that $|T_M|=p-1$ and all
terms of $T_M$ lie outside $H_M$.

Assign each remaining term outside $K$ to the sequence $A_M$
corresponding to the subgroup containing its image, and distribute the
terms of $S_K$ among these sequences. Then $A_M$ is supported in $H_M$
and
\[
 \sum_M|A_M|=L-(p^2-1)=p|K|-p.
\]
The subset sums of the image of $T_M$ fill $A/H_M\cong C_p$.
Consequently, Lemmas~\ref{lem:2.4}, \ref{lem:2.6}, and \ref{lem:2.7}
give
\[
 |\Sigma_0(A_MT_M)|-1
 \geq |A_MT_M|+(p-1)|A_M|.
\]
Kneser's theorem now yields
\[
 |\Sigma_0(S)|\geq 1+L+(p-1)(p|K|-p)=p^2|K|=|A|,
\]
a contradiction.
\end{proof}

\section{Groups of rank at least three}\label{sec:higher-rank}

In this section, we complete all the remaining cases for groups of rank at least three. We first bound the number of terms in the kernel of a homomorphism
onto $C_p^2$.

\begin{theorem}\label{thm:4.1}
Let $p$ be the smallest prime divisor of $|G|$, and suppose that
$m(G)=|G|/p+p-2$. Let $\pi:G\longrightarrow C_p^2$ be surjective,
with kernel $K$ satisfying $|K|\geq p^2$. If $S$ is regular,
$|S|=p|K|+p-2$, and $\Sigma(S)\neq G$, then $|S_K|\leq p-2$.
\end{theorem}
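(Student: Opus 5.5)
We argue by contradiction. Suppose $z=|S_K|\geq p-1$, where $S$ is regular, $|S|=L:=p|K|+p-2$ and $\Sigma(S)\neq G$. Since $L\geq m(G)\geq\mathsf D(G)$ by \eqref{eq:Davenport}, Lemma~\ref{lem:2.4} applies to $S$. Hence every $\Sigma_0(T)$ with $T\mid S$ has trivial stabilizer and $e(T)\geq 0$. In particular Kneser's theorem (Theorem~\ref{thm:2.1}) applies to any factorization $S=B_1\cdots B_t$ and gives
\[
|\Sigma_0(S)|-1\geq\sum_i\bigl(|\Sigma_0(B_i)|-1\bigr),\qquad\text{so}\qquad |\Sigma_0(S)|\geq 1+L+\sum_ie(B_i).
\]
The goal is to find a factorization with total excess at least $|G|-1-L=(p^2-p)|K|-p+1$.

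The factorization follows the proof of Theorem~\ref{thm:3.4}, with one extra block built around $S_K$. First, inside the terms outside $K$ I choose a short subsequence $C$ such that $\Sigma_0(\pi(C))=C_p^2$ and $|C|\leq 2p-1$. The image $\pi(SS_K^{-1})$ has more than $2p-1$ nonzero terms. If it is not regular over $C_p^2$, a line $M$ carries at least $p$ of them; by Lemma~\ref{lem:2.6} these fill $M$, and by \eqref{eq:3.4}-type counting there are many terms outside $H_M$ which then fill $C_p^2/M$. Otherwise Lemma~\ref{lem:2.12} (with Theorem~\ref{thm:2.11}) applies. Lemma~\ref{lem:2.7} then gives
\[
|\Sigma_0(S_KC)|\geq p^2|\Sigma_0(S_K)|\geq p^2(z+1),\qquad\text{so}\qquad e(S_KC)\geq p^2(z+1)-z-|C|-1.
\]
Next, for the remaining terms I apply Lemma~\ref{lem:3.1} to all $p+1$ lines $M\leq C_p^2$ with $\ell=p-1$. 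Regularity in $H_M$ still leaves at least $p(p-1)-|C|$ terms outside $H_M$, and $|K|\geq p^2$ guarantees that the union condition holds. This yields disjoint blocks $T_M$ of length $p-1$ lying outside $H_M$. I assign every leftover term to an $A_M$ with $\pi(A_M)\subseteq M$. As in Theorem~\ref{thm:3.4}, $e(A_MT_M)\geq(p-1)|A_M|$, with
\[
\sum_M|A_M|=L-z-|C|-(p^2-1).
\]

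Adding the excesses, the required inequality becomes, after simplification, roughly
\[
p^2(z+1)-p(z+|C|)-1\geq p(p-1)^2.
\]
This holds comfortably once $z$ is somewhat larger than $p$, and I would verify it there directly.

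The main obstacle is the boundary range $z\approx p-1$ with $|C|=2p-1$. Here the naive count falls short by about $p^2-p+1$, so the accounting must be sharpened. I would try, in order:
\begin{enumerate}
\item Recycle two of the blocks $T_{M_1},T_{M_2}$ as part of $C$. This saves up to $2p-2$ terms, which would otherwise be counted twice.
\item Use the second part of Theorem~\ref{thm:2.11}: a regular $Y$ of length $2p-2$ already has $|\Sigma_0(Y)|\geq p^2-1$. So $|C|=2p-2$ suffices, at the cost of replacing the factor $p^2$ by $p^2-1$ in the bound for $|\Sigma_0(S_KC)|$.
\item Exploit that Lemma~\ref{lem:2.4} gives $|\Sigma_0(S_K)|\geq z+1$ with trivial stabilizer. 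Since $|K|\geq p^2$, $S_K$ is regular over $K$ by Lemma~\ref{lem:2.9}, so a sharper lower bound for $|\Sigma_0(S_K)|$ may be available.
\end{enumerate}
The final inequality check I expect to be routine once this boundary case is handled.
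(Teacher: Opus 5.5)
\textbf{The decisive case $z=p-1$ is left unproved.} At $z=p-1$ your closing inequality $p^2(z+1)-p(z+|C|)-1\geq p(p-1)^2$ is equivalent to $|C|\leq p-1$. With $|C|=2p-1$ it needs $z\geq p+1$. A sequence of length $p-1$ has at most $2^{p-1}$ subset sums, so for $p=3,5$ no such $C$ exists at all, and in general your construction does not supply one.

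Your three remedies do not repair this:
\begin{enumerate}
\item[(i)] Recycling $T_{M_1}T_{M_2}$ as $C$ aims at the wrong target: $\pi(T_{M_1}T_{M_2})$ may lie in a third line, and $A_{M_1},A_{M_2}$ would lose their partners.
\item[(ii)] Replacing $(|C|,p^2)$ by $(2p-2,p^2-1)$ changes your lower bound for the total excess by $(p-1)-z$, which is $0$ at $z=p-1$.
\item[(iii)] This gives nothing in general. Lemma~\ref{lem:2.4} and regularity are compatible with $|\Sigma_0(S_K)|=z+1$, e.g.\ $S_K=g^{[z]}$ with $\operatorname{ord}(g)>z+1$.
\end{enumerate}
The defect is structural. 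Take $C$ to be $p-1$ terms of $H_M\setminus K$ together with $p-1$ terms outside $H_M$. Then a separate block $S_KC$ is exactly $(p-1)^2$ short at $z=p-1$. The reason is that the terms of $C$ outside $H_M$ amplify only $S_K$, whereas as the block $T_M$ they would amplify all of $A_M$.

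There is also a slip in the setup. The count ``$p(p-1)-|C|$ terms outside $H_M$'' belongs to Theorem~\ref{thm:3.4}, where $|S|=p|K|+p^2-p-1$. Here regularity only gives $|S|-|S_{H_M}|\geq p-1$. Since $C$ must meet the complement of every $H_M$, removing $C$ before choosing the $T_M$ can destroy Hall's condition in Lemma~\ref{lem:3.1}.

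\textbf{The missing idea} is not to give $S_K$ a block of its own, and to keep the full multiplicative bound from Lemma~\ref{lem:2.7}, namely $|\Sigma_0(A_MT_M)|-1\geq|A_MT_M|+(p-1)|A_M|+p\,e(A_M)$. Remove only the $p^2-1$ terms of the $T_M$, chosen from all of $S$. Kneser's theorem then gives $|\Sigma_0(S)|\geq|G|-p(p-1)^2+p\sum_Me(A_M)$, so $\sum_Me(A_M)\leq(p-1)^2-1$ for every distribution of $S_K$. At least $(p-1)(|K|-p)\geq p(p-1)^2>(p+1)(p-2)$ terms outside $K$ remain. Hence some $A_M$ contains $p-1$ terms $V$ from $H_M\setminus K$. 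Put $p-1$ terms $U$ of $S_K$ into that same $A_M$. Then $|\Sigma_0(UV)|\geq p^2$, so $e(A_M)\geq e(UV)\geq(p-1)^2$ by Lemma~\ref{lem:2.5}, a contradiction. In effect $T_M$ serves both as the outer half of your $C$ and as the amplifier for the rest of $A_M$. This sharing is worth exactly the missing $(p-1)^2$, and it enters the count through the term $p\,e(A_M)$.
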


\begin{proof}
Let $L=p|K|+p-2$. For each subgroup $M\leq C_p^2$ of order $p$, let
$H_M=\pi^{-1}(M)$. Regularity gives at least $p-1$ terms outside
$H_M$. For two or more distinct subgroups, every term outside $K$
lies outside at least one of their inverse images, and
\[
 |S|-|S_K|\geq L-(|K|-1)
 =(p-1)(|K|+1)\geq(p+1)(p-1).
\]
By Lemma~\ref{lem:3.1}, there are disjoint subsequences $T_M$, one
for each $M$, of length $p-1$ and supported outside $H_M$.

Remove these $p^2-1$ terms. Assign every remaining term outside $K$
to the sequence $A_M$ determined by its image, and distribute the
terms of $S_K$ among the $A_M$. Then
\[
 \sum_M|A_M|=L-p^2+1.
\]
Lemmas~\ref{lem:2.6} and \ref{lem:2.7} give
$|\Sigma_0(A_MT_M)|\geq p|\Sigma_0(A_M)|$, whence
\[
 |\Sigma_0(A_MT_M)|-1
 \geq |A_MT_M|+(p-1)|A_M|+pe(A_M).
\]
By Lemma~\ref{lem:2.4} and Theorem \ref{thm:2.1},
\begin{align*}
 |\Sigma_0(S)|
 &\geq 1+L+(p-1)(L-p^2+1)+p\sum_Me(A_M)\\
 &=|G|-p(p-1)^2+p\sum_Me(A_M).
\end{align*}
Since $|\Sigma_0(S)|\leq |G|-1$, this implies
\begin{equation}\label{eq:4.1}
 \sum_Me(A_M)\leq(p-1)^2-1.
\end{equation}
This inequality holds for every distribution of the terms of $S_K$.

Suppose that $|S_K|\geq p-1$. After the $T_M$ have been removed, the
number of remaining terms outside $K$ is at least
\[
 (p-1)(|K|+1)-(p^2-1)
 =(p-1)(|K|-p)\geq p(p-1)^2.
\]
Since $p(p-1)^2>(p+1)(p-2)$, some $A_M$ contains at least $p-1$
terms outside $K$. Assign $p-1$ terms of $S_K$ to that same $A_M$.
It then contains disjoint subsequences $U,V$ of length $p-1$, with
$U$ supported in $K$ and $V$ supported outside $K$.
Lemma~\ref{lem:2.4} gives $|\Sigma_0(U)|\geq p$, and the subset sums of
the image of $V$ fill $H_M/K\cong C_p$. Hence
\[
 |\Sigma_0(UV)|\geq p^2,\qquad
 e(UV)\geq p^2-(2p-2)-1=(p-1)^2.
\]
Repeated use of Lemma~\ref{lem:2.5} gives
$e(A_M)\geq(p-1)^2$, contrary to \eqref{eq:4.1}.
\end{proof}

\begin{theorem}\label{thm:4.2}
Let $G$ be a finite abelian group of odd order, and let $p$ be the
smallest prime divisor of $|G|$. If the Sylow $p$-subgroup of $G$ has
rank at least three, then $c_0(G)=m(G)$.
\end{theorem}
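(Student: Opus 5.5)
We argue by contradiction. Since the Sylow $p$-subgroup has rank at least three, $G$ is noncyclic and not of the form $C_p\oplus C_p$ or $C_p\oplus C_{pn}$, so $m(G)=|G|/p+p-2$. Lemma~\ref{lem:2.3} gives the lower bound, so only the upper bound is needed. Suppose $S$ is regular over $G$, $|S|=|G|/p+p-2$, and $\Sigma(S)\neq G$. By \eqref{eq:Davenport} and Lemma~\ref{lem:2.4}, every $\Sigma_0(T)$ with $T\mid S$ has trivial stabilizer and satisfies $e(T)\geq0$. Because the $p$-rank is at least three, $G$ has a quotient $C_p^3$. Every surjection $\pi:G\to C_p^2$ obtained by composing $G\to C_p^3$ with a projection $C_p^3\to C_p^2$ has kernel $K$ of order $|G|/p^2$. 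Then $|S|=p|K|+p-2$. Moreover $|K|\geq p^2$, since $|G|\geq p^3\cdot p$ unless $G\cong C_p^3$, and the elementary case is covered by \eqref{eq:peng}. Hence Theorem~\ref{thm:4.1} applies to every such kernel and gives $|S_K|\leq p-2$.

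The plan is to play these bounds against one another over many kernels. Let $W\leq G$ be the kernel of $G\to C_p^3$, so $|W|=|G|/p^3$, and write $\bar g$ for images in $C_p^3$. The kernels of the maps $G\to C_p^2$ factoring through $C_p^3$ are exactly $\psi^{-1}(\Lambda)$ for the $p^2+p+1$ lines $\Lambda$ of $C_p^3$, where $\psi:G\to C_p^3$ is the quotient map. Each kernel contains at most $p-2$ terms of $S$. Every element of $C_p^3$ lies on some line, and $0$ lies on all of them. Summing over lines, each nonzero point lies on exactly one line, so
\[
|S|\leq |S_W|+\sum_{\Lambda}|S_{\psi^{-1}(\Lambda)}\setminus S_W|\leq (p^2+p+1)(p-2).
\]
This is far smaller than $|G|/p+p-2\geq p^3+p-2$, which gives a contradiction. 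So the argument is short once Theorem~\ref{thm:4.1} is available: the union of all the kernels covers all of $G$, because every element of $C_p^3$ lies in some one-dimensional subspace.

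The steps, in order, are as follows. First, verify $m(G)=|G|/p+p-2$ and dispose of $G=C_p^3$ via \eqref{eq:peng}. Second, check that the hypothesis $|K|\geq p^2$ of Theorem~\ref{thm:4.1} holds for every line kernel; this needs $|G|\geq p^4$, which holds when $G\neq C_p^3$. Third, apply Theorem~\ref{thm:4.1} to each of the $p^2+p+1$ kernels. Fourth, run the covering count above and compare it with $|S|$.

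I expect the main obstacle to be only bookkeeping: confirming that the hypotheses of Lemma~\ref{lem:2.4} hold ($\mathsf D(G)\leq m(G)$ by \eqref{eq:Davenport}), and that Theorem~\ref{thm:4.1} is invoked with exactly $|S|=p|K|+p-2$ for the full quotient-of-order-$p^2$ kernels. If the counting inequality turned out to be too weak in some degenerate small case, I would instead fall back on Theorem~\ref{thm:3.4} or the known $p$-group results from Table~\ref{tab:known}.
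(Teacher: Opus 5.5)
Your proposal is correct and follows essentially the paper's argument. Both apply Theorem~\ref{thm:4.1} to the kernels of surjections onto $C_p^2$ and double count to obtain $|S|\leq (p-2)(p^2+p+1)<p^3\leq |S|$. The difference is cosmetic: the paper averages over all subgroups of order $p^{t-2}$ in $G/pG\cong C_p^t$ and sets aside every elementary abelian group via Peng, whereas you fix one quotient $C_p^3$ and partition its nonzero points among the $p^2+p+1$ lines, so you only need to exclude $C_p^3$ itself.
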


\begin{proof}
The result is known for elementary abelian groups \cite{PengII}.
Assume that $G$ is not elementary abelian, and write
$V=G/pG\cong C_p^t$, where $t\geq 3$ is the rank of the Sylow
$p$-subgroup. Suppose that $S$ is regular and
\[
 |S|=|G|/p+p-2,\qquad \Sigma(S)\neq G.
\]
For every subgroup $W\leq V$ of order $p^{t-2}$, let $K_W$ be its
inverse image in $G$. Then $G/K_W\cong C_p^2$. Since $G$ is not
elementary abelian, either its Sylow $p$-subgroup has order at least
$p^{t+1}$, or $|G|$ has a prime divisor larger than $p$. Thus
$|G|\geq p^{t+1}$ and
\[
 |K_W|=|G|/p^2\geq p^{t-1}\geq p^2.
\]
Theorem~\ref{thm:4.1} gives $|S_{K_W}|\leq p-2$ for every $W$.

We average these inequalities over all subgroups $W$ of order
$p^{t-2}$. Every nonzero element of $V$ belongs to the same number
of such subgroups. Counting the pairs $(x,W)$ with $0\neq x\in W$
shows that the proportion containing any fixed nonzero element is
\[
 \frac{p^{t-2}-1}{p^t-1}.
\]
If $z$ terms of $S$ have zero image in $V$, the average of
$|S_{K_W}|$ is
\[
 z+(|S|-z)\frac{p^{t-2}-1}{p^t-1}
 \geq |S|\frac{p^{t-2}-1}{p^t-1}.
\]
Consequently,
\[
 |S|\leq(p-2)\frac{p^t-1}{p^{t-2}-1}
 =(p-2)\left(p^2+\frac{p^2-1}{p^{t-2}-1}\right)
 \leq(p-2)(p^2+p+1)<p^3.
\]
On the other hand, $|S|=|G|/p+p-2\geq p^t+p-2\geq p^3$, a
contradiction. Lemma~\ref{lem:2.3} supplies the lower bound.
\end{proof}

\begin{proof}[Proof of Theorem~\ref{thm:1.2}]
The result is known when $|G|$ is even \cite{GHHQZ}, so assume that
$|G|$ is odd. Let $p$ be its smallest prime divisor. If the Sylow
$p$-subgroup has rank at least three, apply Theorem~\ref{thm:4.2}.
Otherwise write
\[
 G=C_{n_1}\oplus\cdots\oplus C_{n_r},\qquad
 1<n_1\mid\cdots\mid n_r,\qquad r\geq 3.
\]
If $p\mid n_1$, the Sylow $p$-subgroup would have rank at least three.
Thus a prime divisor $q$ of $n_1$ satisfies $q>p$. The group $G$ has
a quotient $C_q^2$, and $q^3\mid |G|$. Writing $|G|=pq^2h$, we have
$q\mid h$ and hence $h\geq q>p$. The desired result follows from Theorem~\ref{thm:3.3}.
\end{proof}

\section{Proofs of Theorems~\ref{thm:1.3} and~\ref{thm:1.4}}

We begin with two elementary estimates. In a direct sum $X\oplus C_Q$, we identify $C_Q$ with the subgroup $\{0\}\oplus C_Q$.

\begin{lemma}\label{lem:5.1}
Let $Q=q^s$, where $q$ is prime and $s\geq 1$, and let $X$ be a finite abelian group with $q\nmid |X|$. Suppose that $A\subseteq X\oplus C_Q$ projects onto $X$ and contains no coset of $C_Q$. If $g=(x,y)$ and $y$ generates $C_Q$, then
\[
 |A\cup(A+g)|\geq |A|+[X:\langle x\rangle]
 \geq |A|+\frac{|X|}{\exp(X)}.
\]
\end{lemma}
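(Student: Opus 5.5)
Since $q\nmid |X|$, we have $X\oplus C_Q$ with the two summands of coprime orders. The plan is to fiber $A$ over $X$: for each $u\in X$ put
\[
A_u=\{y'\in C_Q:(u,y')\in A\}.
\]
Because $A$ projects onto $X$, every $A_u$ is nonempty. Because $A$ contains no coset of $C_Q$, every $A_u$ is a proper subset of $C_Q$.

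The translate $A+g$ has fiber $A_{u-x}+y$ over $u$. Hence
\[
|A\cup(A+g)|-|A|=\sum_{u\in X}\bigl|(A_{u-x}+y)\setminus A_u\bigr|.
\]
Next, split $X$ into the cosets of $\langle x\rangle$; there are $[X:\langle x\rangle]$ of them. It is enough to show that each coset $u_0+\langle x\rangle$ contributes at least $1$ to this sum.

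Fix such a coset. Let $k$ be the order of $x$, which divides $|X|$ and so is coprime to $q$. Suppose for contradiction that the coset contributes $0$. Then
\[
A_{u_0+(i-1)x}+y\subseteq A_{u_0+ix}\quad\text{for all } i.
\]
Chaining these inclusions $k$ times around the cycle gives $A_{u_0}+ky\subseteq A_{u_0}$, and this is an equality since both sets have the same size. Thus $ky$ stabilizes $A_{u_0}$. Since $\gcd(k,q)=1$ and $y$ generates $C_Q$, the element $ky$ also generates $C_Q$. So $A_{u_0}$ is a nonempty union of cosets of $C_Q$, which means $A_{u_0}=C_Q$. This contradicts properness, so each coset contributes at least $1$, and the first inequality follows.

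The second inequality is immediate, because $|\langle x\rangle|$ divides $\exp(X)$. The only real point in the argument is the cyclic chaining step together with the coprimality of $k$ and $q$. The hypothesis that $Q$ is a prime power is not essential here: coprimality of $|X|$ and $Q$ is what the argument uses.
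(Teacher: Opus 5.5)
Your proof is correct and is essentially the paper's argument. Both show that each slab $C\times C_Q$, for $C$ a coset of $\langle x\rangle$, gains at least one element, since otherwise $A\cap(C\times C_Q)$ would be $g$-invariant and the coprimality of $\operatorname{ord}(x)$ and $Q$ would force a full coset of $C_Q$ into $A$. The paper phrases this as $\langle g\rangle=\langle x\rangle\oplus C_Q$, whereas you chain the fiber inclusions to get invariance under $kg=(0,ky)$; this is the same idea written fiberwise.
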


\begin{proof}
Let $C$ be a coset of $\langle x\rangle$ in $X$. Since $A$ projects onto $X$, the set $A\cap(C\times C_Q)$ is nonempty. If adjoining its translate by $g$ added no element, this set would be invariant under $g$. But $\operatorname{ord}(x)$ and $Q$ are coprime, so
\[
 \langle g\rangle=\langle x\rangle\oplus C_Q.
\]
It would follow that $C\times C_Q\subseteq A$, a contradiction. Thus at least one element is added in each of the $[X:\langle x\rangle]$ disjoint sets $C\times C_Q$.
\end{proof}

\begin{lemma}\label{lem:5.2}
Let $p\ne q$ be primes. Let $A$ be a sequence of length $2p-2$ over $C_p^2\oplus C_q$ whose projection to $C_p^2$ is regular. If $g=(x,y)$ with $x\ne0$ and $y\ne0$, then $\Sigma_0(Ag)$ projects onto $C_p^2$ and
\[
 |\Sigma_0(Ag)|\geq p^2+p-1.
\]
\end{lemma}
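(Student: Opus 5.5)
We write $\psi:C_p^2\oplus C_q\to C_p^2$ for the projection, so that $C_p^2\oplus C_q\cong C_{pq}^2/\dots$ is just $C_p^2\times C_q$ with $\gcd(p,q)=1$. The plan has three steps. First show that $\Sigma_0(Ag)$ projects onto $C_p^2$. Then apply Lemma~\ref{lem:5.1} with $X=C_p^2$ and $Q=q$ to the set $B=\Sigma_0(A)\cup(x,y)+\Sigma_0(A)$. Finally, handle separately the case in which $B$ contains a coset of $C_q$.

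\emph{Projection.} By Theorem~\ref{thm:2.11}, the regular sequence $\psi(A)$ of length $2p-2$ has $|\Sigma_0(\psi(A))|\geq p^2-1$. If $\Sigma_0(\psi(A))=C_p^2$ we are done. Otherwise it misses exactly one element $w\neq 0$, since $0$ is always a subset sum. Because $x\neq 0$ and a translate of a set of size $p^2-1$ by a nonzero element cannot equal the same set, the translate $x+\Sigma_0(\psi(A))$ contains $w$. Hence $\psi(\Sigma_0(Ag))=\Sigma_0(\psi(A))\cup(x+\Sigma_0(\psi(A)))=C_p^2$.

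\emph{Main estimate.} Split by fibers. For each $u\in C_p^2$ let $F_u=\Sigma_0(Ag)\cap(\{u\}\times C_q)$; this is nonempty by the projection step. The target is $\sum_u|F_u|\geq p^2+p-1$, i.e. at least $p-1$ fibres of size $\geq 2$. Lemma~\ref{lem:5.1} with $X=C_p^2$, $Q=q$ and $\operatorname{ord}(x)=p$ gives $[X:\langle x\rangle]=p$ extra elements. We apply it to the set $\Sigma_0(A)$ and the element $g$, but this needs $\Sigma_0(A)$ to project onto $C_p^2$, which holds only in the first case above. In that case $|\Sigma_0(Ag)|\geq p^2+p$, provided $\Sigma_0(A)$ contains no coset of $C_q$. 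If it does contain such a coset, that fibre alone contributes $q\geq 2$, and the bound needs a separate argument.

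\emph{Obstacles.} Two configurations do not follow directly.
\begin{enumerate}
\item \emph{Full coset present.} If some fibre of $\Sigma_0(Ag)$ is a full coset of $C_q$, one would argue per coset $C$ of $\langle x\rangle$ as in the proof of Lemma~\ref{lem:5.1}: either some fibre in $C\times C_q$ gains an element, or the whole $C\times C_q$ is covered. The latter gives $pq\geq 2p$ elements on that coset line, which already suffices.
\item \emph{Missed point.} If $\Sigma_0(\psi(A))$ misses $w$, then $\Sigma_0(A)$ fills $p^2-1$ fibres. Run the argument of Lemma~\ref{lem:5.1} on each of the $p$ cosets $C$ of $\langle x\rangle$. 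For the $p-1$ cosets not containing $w$, $\Sigma_0(A)$ meets every fibre, and $g$-invariance would force all of $C\times C_q$. So each such coset gains at least one new element, since $\langle g\rangle=\langle x\rangle\oplus C_q$.
\end{enumerate}

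Case 2 yields $(p^2-1)+1+(p-1)=p^2+p-1$, counting the fibre over $w$ separately. This explains the exact constant in the statement. The main care is the accounting in the coset containing $w$, where the new element over $w$ is the only guaranteed gain.
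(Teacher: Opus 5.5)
Your proof is correct and is essentially the paper's argument: Theorem~\ref{thm:2.11} plus the translate by $x$ gives the projection onto $C_p^2$. Counting over the cosets $C$ of $\langle x\rangle$ then gives at least $p+1$ elements in each $C\times C_q$ with $C$ inside the projection of $\Sigma_0(A)$ (either a new element or, because $\langle g\rangle=\langle x\rangle\oplus C_q$, all of $C\times C_q$), and at least $p$ in the one possible exceptional coset, for $(p-1)(p+1)+p$ in total. The detour through Lemma~\ref{lem:5.1} is unnecessary, and in your second obstacle ``gains at least one new element'' should read ``contains at least $p+1$ elements'', since $g$-invariance is not contradictory here but yields the full coset, as in your first obstacle.
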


\begin{proof}
Let $B$ be the projection of $\Sigma_0(A)$ to $C_p^2$. By Theorem~\ref{thm:2.11}, $|B|\geq p^2-1$. Since $x\ne0$, we have $B\cup(x+B)=C_p^2$.

Let $L=\langle x\rangle$. If a coset $C$ of $L$ is contained in $B$, then $\Sigma_0(A)\cap(C\times C_q)$ has at least $p$ elements. Its union with its translate by $g$ has at least $p+1$ elements. Indeed, otherwise it would have exactly $p$ elements and would be invariant under $g$. Since $g$ generates $L\oplus C_q$, it would then contain all $pq$ elements of $C\times C_q$, a contradiction.

If $B=C_p^2$, this argument applies to all $p$ cosets of $L$. Otherwise it applies to $p-1$ cosets. The remaining coset contributes at least $p$ elements after $g$ is adjoined, since the projection is now all of $C_p^2$. Hence in either case
\[
 |\Sigma_0(Ag)|\geq (p-1)(p+1)+p=p^2+p-1.
\]
\end{proof}

Throughout the rest of this section, $p<q$ are distinct odd primes. First let
\[
 G=C_p\oplus C_{pq}\cong P\oplus Q,
 \qquad P\cong C_p^2,\quad Q\cong C_q,
\]
and let $\rho:G\to P$ be the projection. By Lemma~\ref{lem:2.3}, it remains to prove the upper bound in Theorem~\ref{thm:1.3}. Suppose that $S$ is regular and
\[
 |S|=pq+2p-3,\qquad \Sigma(S)\ne G.
\]
Since $\mathsf D(G)=p+pq-1\leq |S|$, we can use Lemma~\ref{lem:2.4}.

\begin{lemma}\label{lem:5.3}
There is a factorization $S=RT$ such that
\[
 |R|=2p-1,\qquad |T|=pq-2,
\]
$\Sigma_0(\rho(R))=P$, every term of $T$ has nonzero $Q$-coordinate, and
\[
 |\Sigma_0(R)|\geq p^2+p-1.
\]
\end{lemma}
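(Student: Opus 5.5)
The plan is to build $R$ as a subsequence of length $2p-2$ whose projection to $P$ is regular over $P$, and then adjoin one term $g=(x,y)$ with $x\neq0$ and $y\neq0$. Lemma~\ref{lem:5.2} then gives both $\Sigma_0(\rho(R))=P$ and $|\Sigma_0(R)|\geq p^2+p-1$ directly. For $T=SR^{-1}$ to consist only of terms with nonzero $Q$-coordinate, $R$ must contain every term of $S$ lying in $P$, i.e. $W:=S_P\mid R$.

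\emph{Step 1: control $W$.} By Lemma~\ref{lem:2.4}, $\Sigma_0(W)$ has trivial stabilizer. Regularity of $S$ forbids the term $0$, so Lemma~\ref{lem:2.9} shows that $W$ is regular over $P\cong C_p^2$. If $|W|\geq 2p-1=c_0(P)$ (Theorem~\ref{thm:2.11}), then $\Sigma_0(W)=P$, whose stabilizer is $P\neq\{0\}$, a contradiction. Hence $w:=|W|\leq 2p-2$ and $\rho(W)=W$ is regular over $P$.

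\emph{Step 2: extend $W$.} We add $2p-2-w$ terms with nonzero $Q$-coordinate and nonzero $\rho$-image so that $\rho$ of the resulting sequence $R'$ stays regular over $P$. Since the only proper nontrivial subgroups of $P$ are its $p+1$ lines $M$, this means at most $p-1$ terms of $\rho(R')$ in each $M\setminus\{0\}$. Terms with $\rho$-image $0$ lie in $Q$, and there are at most $q-1$ of them.

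The candidates with $\rho$-image in a fixed line $M$ all lie in $H_M=\rho^{-1}(M)\cong C_{pq}$. For this subgroup, $\Sigma_0(S_{H_M})$ has trivial stabilizer, so Lemma~\ref{lem:2.9} and $c_0(C_{pq})=pq$ give $|S_{H_M}|\leq pq-1$. Since $|S|=pq+2p-3$ and two distinct $H_M$ meet only in $Q$, at most one line can carry a large load.

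I would add terms greedily, taking at most $p-1$ from each line. Terms from the heaviest line are used only as needed, and lines other than the heaviest supply the rest. At most two lines can ever become saturated, since $2p-2=2(p-1)$.

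\emph{Step 3: choose $g$.} Finally pick $g\mid SR'^{-1}$ with $\rho(g)\neq0$ and nonzero $Q$-coordinate. Such a term exists because at most $2p-2$ terms lie in $P$, at most $q-1$ in $Q$, and at most $2p-2$ are used in $R'$, far fewer than $|S|$. Then apply Lemma~\ref{lem:5.2} to $A=R'$ and this $g$, and put $R=R'g$.

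\emph{Main obstacle.} The hard part is the counting in Step~2, namely guaranteeing enough admissible terms when nearly all of $S$ lies in one or two of the $H_M$. If $S$ is concentrated in a single $H_M$, the at least $2p-2$ terms outside $H_M$ have $\rho$-images outside $M$, and these are spread over the remaining $p$ lines. These should suffice to complete $R'$ together with at most $p-1$ terms from $M$. If the greedy choice nevertheless fails, I would instead argue by Kneser's theorem: split $S$ as $S_{H_M}$ times the terms outside $H_M$, and use Lemma~\ref{lem:2.7} with the quotient $G/H_M\cong C_p$ and Lemma~\ref{lem:2.6}. The aim is to show that such a concentration forces $|\Sigma_0(S)|=|G|$, contradicting $\Sigma(S)\neq G$.
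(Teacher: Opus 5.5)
Your architecture is the paper's: Step~1 (forcing $|S_P|\leq 2p-2$), extending $S_P$ to a length-$(2p-2)$ sequence $R_0$ with regular projection, adjoining a term with both coordinates nonzero, and invoking Lemma~\ref{lem:5.2}. The gap is Step~2, which you flag yourself and then leave open.

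\begin{itemize}
\item ``At most one line can carry a large load'' is not the relevant quantity.
\item ``These should suffice'' is not an argument.
\item The Kneser fallback is neither worked out nor needed. As sketched, Lemma~\ref{lem:2.7} only gives $|\Sigma_0(S)|\geq p(|S_{H_M}|+1)$, far below $|G|=p^2q$ unless $S_{H_M}$ has nearly $pq$ terms.
\end{itemize}

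Let $a_M$ count the terms of $S$, including those of $S_P$, whose projection lies in $M\setminus\{0\}$. Your greedy extension succeeds precisely when the total capacity $\sum_M\min\{a_M,p-1\}$ is at least $2p-2$, so that inequality is what must be proved.

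It follows from one count. For each line $L\leq P$, regularity of $S$ in the proper subgroup $\rho^{-1}(L)=L\oplus Q$ of order $pq$ gives
\[
 \sum_{M\neq L}a_M\geq |S|-(pq-1)=2p-2 .
\]
If two lines have $a_M\geq p-1$, the capacity bound is immediate. Otherwise choose $L$ with $a_L$ maximal. Every other $a_M$ is then below $p-1$, so
\[
 \sum_{M\neq L}\min\{a_M,p-1\}=\sum_{M\neq L}a_M\geq 2p-2 .
\]
The terms of $S_P$ occupy $c_M\leq\min\{a_M,p-1\}$ places on each line, and $\sum_M c_M\leq 2p-2$. Hence you can enlarge $S_P$ line by line to exactly $2p-2$ terms, with at most $p-1$ on each line and none projecting to $0$.

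Step~3 then goes through. The sequence $SR_0^{-1}$ has $pq-1$ terms, all with nonzero $Q$-coordinate, and at most $q-1$ of them lie in $Q$, so a term with both coordinates nonzero exists.
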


\begin{proof}
The sequence $S_P$ is regular over $P$. If $|S_P|\geq2p-1$, then Theorem~\ref{thm:2.11} gives $\Sigma_0(S_P)=P$, contrary to Lemma~\ref{lem:2.4}. Hence $|S_P|\leq2p-2$.

For each subgroup $L\leq P$ of order $p$, let $a_L$ count the terms of $S$ whose projection belongs to $L\setminus\{0\}$. Since $|L\oplus Q|=pq$, regularity gives
\begin{equation}\label{eq:5.1}
 \sum_{M\ne L}a_M\geq pq+2p-3-(pq-1)=2p-2.
\end{equation}
Consequently,
\begin{equation}\label{eq:5.2}
 \sum_L\min\{a_L,p-1\}\geq2p-2.
\end{equation}
Indeed, this is immediate if two of the $a_L$ are at least $p-1$. Otherwise choose $L$ with $a_L$ largest and apply \eqref{eq:5.1}; none of the other counts is reduced by taking the minimum.

Let $c_L$ count the terms of $S_P$ in $L$. Then $c_L\leq p-1$, $c_L\leq a_L$, and $\sum_Lc_L\leq2p-2$. Thus \eqref{eq:5.2} allows us to choose $R_0\mid S$ of length $2p-2$ containing $S_P$, with no zero projected term and at most $p-1$ projected terms in each subgroup of order $p$. In particular, $\rho(R_0)$ is regular over $P$.

Every term of $SR_0^{-1}$ has nonzero $Q$-coordinate. This sequence has length $pq-1$, whereas at most $q-1$ of its terms lie in $Q$. We can therefore choose $g\mid SR_0^{-1}$ with both coordinates nonzero. Put $R=R_0g$ and $T=SR^{-1}$, and apply Lemma~\ref{lem:5.2}.
\end{proof}

\begin{lemma}\label{lem:5.4}
For the sequence $T$ in Lemma~\ref{lem:5.3}, either $\Sigma_0(T)$ contains a coset of $Q$, or
\[
 \Sigma_0(\rho(T))=P
 \quad\text{and}\quad
 |\Sigma_0(T)|\geq p^2(q-1)-1.
\]
\end{lemma}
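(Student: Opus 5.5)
The plan is to build $\Sigma_0(T)$ by adding the terms of $T$ one at a time in a well-chosen order, and to use a fibre-counting argument in the spirit of Lemma~\ref{lem:5.1}. The basic step is as follows. Let $A\subseteq G=P\oplus Q$ and put $B=\rho(A)$. Let $g=(x,y)$ with $y\neq 0$. Then $\langle g\rangle=\langle x\rangle\oplus Q$, because $\gcd(p,q)=1$.

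Fix a coset $C$ of $\langle x\rangle$ in $P$ that meets $B$. Suppose $A\cap(C\times Q)$ gains no element when $A$ is replaced by $A\cup(A+g)$. Then this fibre is $g$-invariant, so it is all of $C\times Q$, and in particular contains a coset of $Q$.

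So, unless some partial sum set $\Sigma_0(T')$ with $T'\mid T$ already contains a coset of $Q$, each adjoined term increases the size by at least the number of $\langle x\rangle$-cosets meeting $B$. Containing a coset of $Q$ is inherited by $\Sigma_0(T)$, since $\Sigma_0(T)\supseteq\Sigma_0(T')$. In particular, once $B=P$, every further term adds at least $p$ elements, whether $x=0$ or not. Throughout we may assume this first alternative never occurs.

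\textbf{Case $\Sigma_0(\rho(T))=P$.} Apply Lemma~\ref{lem:2.12} to $\rho(T)$. This gives $C\mid T$ whose image $\rho(C)$ has $\Sigma_0(\rho(C))=P$, with either $|C|\le 2p-2$ or $|C|=2p-1$ and $\rho(C)$ regular. Order $T$ so that the terms of $C$ come first.

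In the first subcase, $\Sigma_0(C)$ projects onto $P$, so it has at least $p^2$ elements. The remaining $pq-2-|C|$ terms each add at least $p$, giving
\[
|\Sigma_0(T)|\ge p^2+p(pq-2p)\ge p^2(q-1)-1 .
\]
In the second subcase, Theorem~\ref{thm:2.11} applied to the first $2p-2$ terms gives a projection of size at least $p^2-1$. The next term of $C$ makes the projection full and adds at least $p$ elements, by the argument of Lemma~\ref{lem:5.2}. The last $pq-2p-1$ terms add $p$ each, so
\[
|\Sigma_0(T)|\ge p^2+p-1+p(pq-2p-1)=p^2(q-1)-1 .
\]

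\textbf{Case $\Sigma_0(\rho(T))\neq P$.} At most $q-1$ terms of $T$ lie in $Q$, by regularity of $S$. Hence $\rho(T)$ has at least $pq-q-1\ge 2p-1$ nonzero terms. By Lemma~\ref{lem:2.2} and Theorem~\ref{thm:2.11}, these cannot form a regular sequence over $P$. So some line $L\le P$ contains at least $p$ of them, and therefore $L\subseteq\Sigma_0(\rho(T))$.

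Consequently $\Sigma_0(\rho(T))$ is a union of $L$-cosets that is not all of $P$. Lemma~\ref{lem:2.6}, applied in $P/L\cong C_p$, then forces at most $p-2$ terms of $T$ to project outside $L$. Hence at least $pq-p$ terms of $T$ lie in the cyclic group $L\oplus Q\cong C_{pq}$.

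Run the same greedy argument inside $L\oplus Q$, starting with the terms whose image generates $L$. Once the projection covers $L$, each further term with nonzero $Q$-coordinate adds at least one element per fibre, and an unchanged fibre is a full $Q$-coset. A sum set in $C_{pq}$ of size at least $pq-p+1$ meets every $Q$-coset and misses at most $p-1$ elements. Since there are $p$ cosets of $Q$ in $L\oplus Q$, some coset is missed entirely by the complement, so the sum set contains a full coset of $Q$. This is the first alternative of the lemma.

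The main obstacle is this last case. One must check carefully that the sequence truly concentrates in $L\oplus Q$, and that the counting there genuinely reaches a full coset of $Q$. The counting also has to accommodate terms lying in $Q$ itself, for which $x=0$.
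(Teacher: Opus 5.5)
Your argument is correct and follows the paper's proof. Absent a full $Q$-coset, you show $\Sigma_0(\rho(T))=P$: otherwise a line $L$ carries at least $pq-p$ terms, and the count in $L\oplus Q$ exceeds $p(q-1)$, exactly as in the paper's case $|H|=p$. You then combine Lemma~\ref{lem:2.12} with the fibre counting of Lemmas~\ref{lem:5.1} and~\ref{lem:5.2}.

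The differences are minor. You exclude a trivial stabilizer of $\Sigma_0(\rho(T))$ by applying $c_0(C_p^2)=2p-1$ to the at least $pq-q-1\ge 2p-1$ nonzero projected terms, whereas the paper uses Kneser's theorem with $|T_Q|\le q-2$. Your greedy count in $L\oplus Q$ just reproves the bound $|\Sigma_0(I)|\ge |I|+1$ that Lemma~\ref{lem:2.4} gives directly.
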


\begin{proof}
Suppose that $\Sigma_0(T)$ contains no coset of $Q$. Let $B=\Sigma_0(\rho(T))$ and $H=\operatorname{st}_P(B)$. We first show that $B=P$.

Suppose that $H=\{0\}$. Let $Z=T_Q$, and put $z=|Z|$ and $m=|T|-z$. Kneser's theorem gives $|B|\geq m+1$. If $B\ne P$, then $m\leq p^2-2$. Also, $\Sigma_0(Z)\subseteq Q$, so Lemma~\ref{lem:2.4} and our assumption give $z+1\leq q-1$; this also holds when $Z$ is empty. Therefore
\[
 pq-2=|T|\leq p^2+q-4,
\]
which is impossible since $q\geq p+2$.

Suppose next that $|H|=p$. Write $T=IU$, where $I$ consists of the terms whose projections lie in $H$. Since the image of $B$ in $P/H$ is proper, Lemma~\ref{lem:2.6} gives $|U|\leq p-2$. Moreover, $\Sigma_0(I)\subseteq H\oplus Q$, and each of the $p$ cosets of $Q$ in this subgroup contains at most $q-1$ elements of $\Sigma_0(I)$. Thus
\[
 |I|+1\leq |\Sigma_0(I)|\leq p(q-1),
\]
whence $|T|\leq pq-3$, another contradiction. It follows that $H=P$, and hence $B=P$.

By Lemma~\ref{lem:2.12}, choose $C\mid T$ with $\Sigma_0(\rho(C))=P$ such that either $|C|\leq2p-2$, or $|C|=2p-1$ and $\rho(C)$ is regular. In the first case, $|\Sigma_0(C)|\geq p^2$. Adjoin the remaining terms of $T$ one at a time. Each has nonzero $Q$-coordinate, and every intermediate sumset projects onto $P$ and contains no coset of $Q$. Lemma~\ref{lem:5.1} gives
\[
 |\Sigma_0(T)|\geq p^2+p\bigl(pq-2-(2p-2)\bigr)=p^2(q-1).
\]
In the second case, remove any term $g$ of $C$. The sequence $\rho(Cg^{-1})$ is regular of length $2p-2$, and both coordinates of $g$ are nonzero. Lemma~\ref{lem:5.2} gives $|\Sigma_0(C)|\geq p^2+p-1$. Applying Lemma~\ref{lem:5.1} to the remaining terms yields
\[
 |\Sigma_0(T)|\geq p^2+p-1+p\bigl(pq-2-(2p-1)\bigr)
 =p^2(q-1)-1.
\]
\end{proof}

\begin{proof}[Proof of Theorem~\ref{thm:1.3}]
Use the factorization $S=RT$ from Lemma~\ref{lem:5.3}. If $\Sigma_0(T)$ contains a coset of $Q$, then $\Sigma_0(\rho(R))=P$ gives $\Sigma_0(S)=G$. Otherwise Lemmas~\ref{lem:5.3} and~\ref{lem:5.4} give
\[
 |\Sigma_0(R)|+|\Sigma_0(T)|
 \geq p^2+p-1+p^2(q-1)-1
 =|G|+p-2>|G|.
\]
Consequently, for every $g\in G$, the intersection of $\Sigma_0(R)$ and $g-\Sigma_0(T)$ is nonempty. Hence $\Sigma_0(S)=G$ in this case as well. Since $0\in\Sigma(S)$ by Lemma~\ref{lem:2.4}, we obtain the desired contradiction.
\end{proof}

We next consider
\[
 G=C_q\oplus C_{pq}\cong V\oplus K,
 \qquad V\cong C_q^2,\quad K\cong C_p,
\]
and write $\rho:G\to V$ for the projection. Lemma~\ref{lem:2.3} gives $c_0(G)\geq q^2+p-2$. Suppose that $S$ is regular and
\[
 |S|=q^2+p-2,\qquad\Sigma(S)\ne G.
\]
Since $D(G)=q+pq-1<|S|$, Lemma~\ref{lem:2.4} again applies.

\begin{lemma}\label{lem:5.5}
There is a factorization $S=RT$ such that
\[
 |R|=2q-1,\qquad |T|=q^2-2q+p-1,
\]
$\Sigma_0(\rho(R))=V$, and every term of $T$ has nonzero $K$-coordinate.
\end{lemma}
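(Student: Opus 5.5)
Following the proof of Lemma~\ref{lem:5.3}, with the roles of the primes interchanged, we first bound $S_V$. The sequence $S_V$ (terms with zero $K$-coordinate) is regular over $V\cong C_q^2$. If $|S_V|\geq 2q-1$, Theorem~\ref{thm:2.11} gives $\Sigma_0(S_V)=V$. Then $\operatorname{st}(\Sigma_0(S))\supseteq V\neq\{0\}$, contrary to Lemma~\ref{lem:2.4}. Hence $|S_V|\leq 2q-2$.

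Next we build $R$ of length $2q-1$ containing $S_V$ whose projection $\rho(R)$ is a regular sequence over $V$ of length $2q-1$, so that Theorem~\ref{thm:2.11} gives $\Sigma_0(\rho(R))=V$. For each subgroup $L\leq V$ of order $q$, let $a_L$ count the terms of $S$ whose projection lies in $L\setminus\{0\}$. Since $|L\oplus K|=pq$, regularity gives
\[
 \sum_{M\neq L}a_M\geq q^2+p-2-(pq-1)=q(q-p)+p-1\geq 2q-1,
\]
using $q\geq p+2$; the last inequality is $(q-2)(q-p)+p-1\geq q$, which holds since $q-p\geq 2$. Arguing as for \eqref{eq:5.2}, we get $\sum_L\min\{a_L,q-1\}\geq 2q-1$. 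Moreover $\sum_L a_L\geq |S|-|S_K|\geq q^2+p-2-(p-1)=q^2-1$, which is ample.

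Let $c_L\leq\min\{q-1,a_L\}$ count the terms of $S_V$ in $L$, so $\sum c_L\leq 2q-2$. We extend $S_V$ greedily by terms outside $K$, never exceeding $q-1$ projected terms in any $L$, until the length is $2q-1$. This is possible because the available capacity $\sum_L\min\{a_L,q-1\}\geq 2q-1$ dominates $\sum c_L$. The resulting $R$ has no zero projected term and at most $q-1$ projected terms in every proper nontrivial subgroup of $V$, so $\rho(R)$ is regular. Put $T=SR^{-1}$. Because $S_V\mid R$, every term of $T$ has nonzero $K$-coordinate, and $|T|=q^2+p-2-(2q-1)$, as required.

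The main obstacle is the counting step: checking that the capacities $\min\{a_L,q-1\}$ still allow $2q-1$ terms rather than $2q-2$. This is where $q>p$ (hence $q\geq p+2$) is used, and the case where one $a_L$ is large needs the displayed inequality over the remaining subgroups.
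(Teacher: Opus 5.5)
The bound $|S_V|\le 2q-2$ is fine. The gap is the claim $\sum_L\min\{a_L,q-1\}\ge 2q-1$: it does not follow from the argument for \eqref{eq:5.2}, and your counting cannot give it.

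That argument has two cases, and you only redid the one where at most one $a_L$ is at least $q-1$. If two counts $a_{L_1},a_{L_2}$ are at least $q-1$, the sum of the minima is only guaranteed to be $2q-2$. It equals $2q-2$ exactly when every term outside $K$ projects into $L_1\cup L_2$. Regularity does not exclude this. For $p=5$, $q=7$, take $S_K$ empty and $26$ terms in each of $(L_i\oplus K)\setminus K$, with at most six of them in $L_i$ itself. Then every condition $|S_H|\le |H|-1$ holds, and so does your displayed inequality. In this situation the object you want does not exist. A regular sequence over $C_q^2$ has at most $q-1$ terms in each subgroup of order $q$, so no regular sequence of length $2q-1$ can be supported on $L_1\cup L_2$. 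Excluding the case would require using $\Sigma(S)\ne G$ in an essential way, which you do not do. Separately, your rewriting of $q(q-p)+p-1\ge 2q-1$ as $(q-2)(q-p)+p-1\ge q$ is not an equivalence. The correct form is $q(q-p-2)+p\ge 0$, which does hold.

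The repair is short, and there are two ways to do it.

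One is to handle this case directly. Choose $q-1$ terms projecting into each of $L_1\setminus\{0\}$ and $L_2\setminus\{0\}$, including all terms of $S_V$. By Lemma~\ref{lem:2.6} their projections give $\Sigma_0\supseteq L_1+L_2=V$, and any further term may be appended.

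The other is to argue as the paper does and aim only for length $2q-2$. The argument for \eqref{eq:5.2} does yield $\sum_L\min\{a_L,q-1\}\ge 2q-2$. This gives $R_0$ with $S_V\mid R_0$, $|R_0|=2q-2$, and $\rho(R_0)$ regular. Next append a term $g$ of $SR_0^{-1}$ with $\rho(g)\ne 0$. It exists because at most $p-1$ of the $q^2-2q+p$ remaining terms lie in $K$, and all remaining terms have nonzero $K$-coordinate. Finally, the second assertion of Theorem~\ref{thm:2.11} gives $|\Sigma_0(\rho(R_0))|\ge q^2-1$, so $\Sigma_0(\rho(R_0))\cup(\rho(g)+\Sigma_0(\rho(R_0)))=V$. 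That extra estimate is exactly what lets the paper avoid needing a regular projection of length $2q-1$, which is the step that can fail in your version.
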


\begin{proof}
As in Lemma~\ref{lem:5.3}, $|S_V|\leq2q-2$. For each subgroup $L\leq V$ of order $q$, let $a_L$ count the terms whose projection belongs to $L\setminus\{0\}$. Regularity in $L\oplus K$ gives
\[
 \sum_{M\ne L}a_M\geq q^2+p-2-(pq-1)
 =q(q-p)+p-1\geq2q-2.
\]
The argument for \eqref{eq:5.2}, with $q$ in place of $p$, gives
$\sum_L\min\{a_L,q-1\}\geq2q-2$. We may therefore choose $R_0\mid S$ of length $2q-2$ containing $S_V$ such that $\rho(R_0)$ is regular over $V$.

The complement has length $q^2-2q+p$, all its terms have nonzero $K$-coordinate, and at most $p-1$ lie in $K$. Choose a term $g$ with both coordinates nonzero, and put $R=R_0g$ and $T=SR^{-1}$. Theorem~\ref{thm:2.11} gives $|\Sigma_0(\rho(R_0))|\geq q^2-1$. Its union with its translate by $\rho(g)\ne0$ is all of $V$.
\end{proof}

\begin{lemma}\label{lem:5.6}
For the sequence $T$ in Lemma~\ref{lem:5.5}, the set $\Sigma_0(T)$ contains a coset of $K$.
\end{lemma}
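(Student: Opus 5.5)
The plan is to argue by contradiction. Assume that $\Sigma_0(T)$ contains no coset of $K\cong C_p$. Then every fibre $\Sigma_0(T)\cap(v+K)$, $v\in V$, has at most $p-1$ elements, so
\[
 |\Sigma_0(T)|\leq (p-1)|B|\leq (p-1)q^2,\qquad B=\Sigma_0(\rho(T)).
\]
I will get a contradiction by proving a lower bound larger than $(p-1)q^2$. This follows the scheme of Lemma~\ref{lem:5.4}, with $V=C_q^2$ in place of $P$ and $K=C_p$ in place of $Q$. Note that $q\geq p+2\geq 5$.

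\emph{Step 1: $B=V$.} Let $H=\operatorname{st}(B)\leq V$. Write $T=ZU$, where $Z=T_K$ consists of the terms with zero projection and $U$ of the remaining terms. Since $\Sigma_0(Z)\subseteq K$ contains no coset of $K$, Lemma~\ref{lem:2.4} gives $|Z|\leq p-2$. Hence
\[
 |U|\geq q^2-2q+1=(q-1)^2\geq 2q-1 .
\]
If some subgroup $L\leq V$ of order $q$ contains at least $q-1$ terms of $\rho(U)$, then by Lemma~\ref{lem:2.6} their subset sums fill $L$. Writing $B$ as $\Sigma_0$ of those terms plus $\Sigma_0$ of the rest gives $L\subseteq H$. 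So if $H=\{0\}$, every line carries at most $q-2$ terms of $\rho(U)$. Then $\rho(U)$ is a regular sequence over $V$ of length at least $2q-1$, and Theorem~\ref{thm:2.11} gives $B=V$, contradicting $H=\{0\}$.

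If $|H|=q$, split $T=IU'$, where $\rho(I)\subseteq H$. Because the image of $B$ in $V/H\cong C_q$ is proper, Lemma~\ref{lem:2.6} gives $|U'|\leq q-2$. Also $\Sigma_0(I)\subseteq H\oplus K$ meets each of the $q$ cosets of $K$ in at most $p-1$ points, and Lemma~\ref{lem:2.4} gives $|\Sigma_0(I)|\geq |I|+1$. Therefore
\[
 |T|\leq q(p-1)-1+q-2=pq-3 .
\]
This contradicts $|T|=q^2-2q+p-1$, since
\[
 (q^2-2q+p-1)-(pq-3)=q(q-p-2)+p+2>0 .
\]
Hence $H=V$, that is, $B=V$.

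\emph{Step 2: growth.} By Lemma~\ref{lem:2.12}, applied with the prime $q$ to $\rho(T)$, choose $C\mid T$ with $\Sigma_0(\rho(C))=V$ and $|C|\leq 2q-1$. Then $\Sigma_0(C)$ projects onto $V$, so $|\Sigma_0(C)|\geq q^2$. Now adjoin the remaining $|T|-|C|\geq q^2-4q+p$ terms one at a time. Every intermediate subset-sum set projects onto $V$ and, by assumption, contains no coset of $K$. Each added term $g=(x,y)$ has $y\neq 0$, so $y$ generates $K$. Lemma~\ref{lem:5.1}, applied with $X=V$ and $C_Q=K$ (legitimate because $p\nmid q^2$), shows that each step adds at least $|V|/\exp(V)=q$ elements. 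Hence
\[
 |\Sigma_0(T)|\geq q^2+q(q^2-4q+p)=q^2(q-3)+pq\geq (p-1)q^2+pq>(p-1)q^2,
\]
using $q-3\geq p-1$. This contradicts the fibre bound, so $\Sigma_0(T)$ contains a coset of $K$.

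The main obstacle is Step 1 in the case of trivial stabilizer. A naive Kneser bound $|B|\geq |U|+1$ is too weak there, because $|U|$ may be smaller than $q^2-2$. The key point is the dichotomy: either some line of $V$ carries $q-1$ projected terms and lies in $\operatorname{st}(B)$, or $\rho(U)$ is regular and Peng's theorem forces $B=V$. The rest of the argument is routine counting.
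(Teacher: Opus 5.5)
Your proof is correct and follows the paper's argument essentially step by step. You use the same stabilizer trichotomy to force $\Sigma_0(\rho(T))=V$, and then the same growth from a short $C\mid T$ with full projection via Lemma~\ref{lem:5.1}. The only differences are minor: you verify regularity of $\rho(U)$ directly by the line argument instead of citing Lemma~\ref{lem:2.9}, and you use only $|C|\le 2q-1$ and $|\Sigma_0(C)|\ge q^2$, correctly noting that the resulting margin of at least $pq$ makes the paper's separate Lemma~\ref{lem:5.2} case unnecessary.
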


\begin{proof}
Suppose otherwise. Put $B=\Sigma_0(\rho(T))$ and $H=\operatorname{st}_V(B)$. We first show that $B=V$.

If $H=\{0\}$, let $z=|T_K|$. Lemma~\ref{lem:2.6} gives $z\leq p-2$, since $p-1$ nonzero terms of $K$ would have subset sums covering $K$. The nonzero projected terms have subset-sum set $B$ and form a regular sequence by Lemma~\ref{lem:2.9}. Their number is at least
\[
 |T|-z\geq(q-1)^2\geq2q-1,
\]
since $q\geq5$. Theorem~\ref{thm:2.11} then gives $B=V$, contrary to $H=\{0\}$.

If $|H|=q$, write $T=IU$, where $I$ consists of the terms whose projections lie in $H$. Lemma~\ref{lem:2.6} gives $|U|\leq q-2$, while Lemma~\ref{lem:2.4} and the absence of a coset of $K$ give
\[
 |I|+1\leq |\Sigma_0(I)|\leq q(p-1).
\]
Thus $|T|\leq pq-3$, contrary to
\[
 |T|-(pq-3)=q(q-p-2)+p+2>0.
\]
Therefore $H=V$ and $B=V$.

Choose $C\mid T$ as in Lemma~\ref{lem:2.12}, with $\Sigma_0(\rho(C))=V$. If $|C|\leq2q-2$, repeated application of Lemma~\ref{lem:5.1} gives
\[
 |\Sigma_0(T)|\geq q^2+q\bigl(|T|-(2q-2)\bigr).
\]
This exceeds $q^2(p-1)$ by
\[
 q\bigl(q^2-(p+2)q+p+1\bigr)>1,
\]
because $q\geq p+2$. If $|C|=2q-1$ and $\rho(C)$ is regular, remove one term and apply Lemma~\ref{lem:5.2}, with the primes interchanged, to obtain $|\Sigma_0(C)|\geq q^2+q-1$. Adjoining the other terms gives a lower bound only one less than the preceding one. In both cases, $|\Sigma_0(T)|>q^2(p-1)$, which is impossible for a subset of $V\oplus K$ containing no coset of $K$.
\end{proof}

\begin{proof}[Proof of Theorem~\ref{thm:1.4}]
Take $S=RT$ as in Lemma~\ref{lem:5.5}. By Lemma~\ref{lem:5.6}, $\Sigma_0(T)$ contains a coset of $K$. Since $\Sigma_0(R)$ projects onto $V$, we obtain $\Sigma_0(S)=G$. Lemma~\ref{lem:2.4} gives $\Sigma(S)=G$, a contradiction.
\end{proof}

\section{Proof of Theorem~\ref{thm:1.5}}

Let $p$ be an odd prime, and write
\begin{equation}\label{eq:6.1}
 H=C_{p^a}\oplus C_{p^bm},
 \qquad 2\leq a\leq b,\quad (m,p)=1,
\end{equation}
where every prime divisor of $m$ is larger than $p$. Let
\[
 e=\exp(H)=p^bm,\qquad d=|H|/p=p^{a-1}e.
\]
In particular, $e\geq p^a\geq p^2$ and $d\geq p^3$.

\begin{lemma}\label{lem:6.1}
Let $q>p$ be prime with $q\nmid pm$, and let $J<H$. Then
\[
 c_0(J\oplus C_q)+c_0(H/J)-2<d(q-1).
\]
If $s\geq2$, $Q=q^s$, and $\alpha=q^{s-1}(q-p)$, then
\[
 c_0(J\oplus C_Q)+c_0(H/J)-2<d(\alpha-1)+1.
\]
\end{lemma}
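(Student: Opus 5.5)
Since $J<H$ is proper, $|J|\leq |H|/p=d$, with equality only when $[H:J]=p$. Lemma~\ref{lem:2.8} gives the crude bounds $c_0(J\oplus C_q)\leq q|J|$ and $c_0(H/J)\leq |H/J|$, and the plan is to show these already suffice. There are two regimes according to the index $k=[H:J]\geq p$. We have $|J|=|H|/k=pd/k$ and $|H/J|=k$, so the left side is at most
\[
 \frac{pdq}{k}+k-2 .
\]
As a function of $k$ on the interval $[p,|H|]$ this is convex, so it suffices to check the endpoints $k=p$ and $k=|H|=pd$.

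For $k=p$, the bound $q|J|=qd$ is too large, so Lemma~\ref{lem:2.8} alone is not enough here. I expect this to be the main obstacle, and I would handle it by using the exact value of $c_0$ rather than $|A|$. By hypothesis $J\oplus C_q$ is a group whose smallest prime is $p$, and since $q\nmid pm$ it is noncyclic unless $J$ is cyclic.

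When $J$ is noncyclic, it is not of the boundary type $C_p\oplus C_{pn}$ except in special cases, and then $c_0\leq m(J\oplus C_q)$, which is roughly $q|J|/p+p$ or $\exp+2p$. The available inputs are the known results (Table~\ref{tab:known}, Theorems~\ref{thm:1.2}--\ref{thm:1.4}), induction on $|H|$ within the family~\eqref{eq:6.1} via Theorem~\ref{thm:1.5} for smaller groups, or simply the bound $c_0(A)\leq |A|/p+p^2-p-1$ from Theorem~\ref{thm:3.4} when $A$ has a $C_p^2$ quotient. In the noncyclic case ($J$ has $p$-rank $2$), Theorem~\ref{thm:3.4} gives
\[
 c_0(J\oplus C_q)\leq qd/p+p^2-p-1,
\]
so the total is at most $qd/p+p^2-2<d(q-1)$, because $d\geq p^3$. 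When $J$ is cyclic of index $p$, $H$ has a cyclic subgroup of index $p$, which forces $a=1$; this contradicts $a\geq2$. So at $k=p$ the group $J$ has $p$-rank two. More generally, for any $k$ the group $J$ can be cyclic only if $|J|\leq e$, i.e.\ $k\geq p^{a}$.

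The remaining cases follow the same pattern.
\begin{itemize}
\item If $k\geq p^2$ and $J$ is noncyclic, Theorem~\ref{thm:3.4} gives a total of at most $qd/(pk/p)+\dots$, which is even smaller.
\item If $J$ is cyclic, so $k\geq p^a\geq p^2$, Lemma~\ref{lem:2.8} gives a total of at most $pdq/p^2+k-2$.
\item At the far endpoint $k=pd$, the total is at most $q+pd-2<d(q-1)$, because $q>p$ and indeed $q\geq p+2$.
\item At $k=p^2$, the total is at most $dq/p+p^2-2<d(q-1)$.
\end{itemize}
The convexity argument then covers the whole range in each case.

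For the second inequality, with $Q=q^s$, the same argument goes through with $q$ replaced by $Q$. The key comparison becomes $Q|J|/p+\dots$ versus $d(\alpha-1)+1$, where $\alpha=Q(q-p)/q$. At $k=p$ with noncyclic $J$ we need
\[
 \frac{Qd}{p}+p^2-2<d\Bigl(\frac{Q(q-p)}{q}-1\Bigr)+1 .
\]
This reduces to $Q\bigl(\tfrac{q-p}{q}-\tfrac1p\bigr)>1+O(p^2/d)$, which holds because $\tfrac{q-p}{q}-\tfrac1p\geq \tfrac{p+2-p}{p+2}-\tfrac1p>0$ for $p\geq3$ and $Q\geq q^2$. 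This case needs care for small $p$: for $p=3$, $q=5$ we get $\tfrac25-\tfrac13=\tfrac1{15}$, and $Q\geq25$ makes the left side $\geq \tfrac53>1$ up to the small $p^2/d$ term, which must also be checked. The cyclic-$J$ subcase ($k\geq p^a$) is again controlled by Lemma~\ref{lem:2.8}, using the same endpoint and convexity argument. The delicate step throughout is the $k=p$ endpoint, where Theorem~\ref{thm:3.4} is essential.
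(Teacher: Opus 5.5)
Your proposal is correct and is essentially the paper's argument. You split according to whether $J$ is cyclic (then $|J|\le e$, i.e.\ $k\ge p^a$, and Lemma~\ref{lem:2.8} suffices) or noncyclic (then $J\oplus C_Q$ has a $C_p^2$ quotient and Theorem~\ref{thm:3.4} applies), and in each case you use convexity and check endpoints; your index $k=[H:J]$ is just the paper's $pd/|J|$. The one loose end is the noncyclic range $p\le k\le d/p$: rather than asserting the bound is ``even smaller,'' you should evaluate the second endpoint $k=d/p$ (i.e.\ $|J|=p^2$), which gives $Qp+d/p+p^2-p-3$, comfortably below both targets.
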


\begin{proof}
let $j=|J|$. Since $J$ is proper, $j\leq d$, and Lemma~\ref{lem:2.8} gives $c_0(H/J)\leq pd/j$. If $J$ is cyclic, then $j\leq e$ and, for $Q=q^s$ with $s\geq1$,
\[
 c_0(J\oplus C_Q)+c_0(H/J)-2
 \leq Qj+\frac{pd}{j}-2,
 \qquad 1\leq j\leq e.
\]
If $J$ is noncyclic, its Sylow $p$-subgroup has rank two, since all other Sylow subgroups of $H$ are cyclic. Hence $J\oplus C_Q$ has a quotient isomorphic to $C_p^2$, and $p$ is its smallest prime divisor. Theorem~\ref{thm:3.4} gives
\[
 c_0(J\oplus C_Q)+c_0(H/J)-2
 \leq \frac{Qj}{p}+\frac{pd}{j}+p^2-p-3,
 \qquad p^2\leq j\leq d.
\]
Both expressions are convex functions of $j$. Thus it suffices to check $j=1,e$ in the cyclic case and $j=p^2,d$ in the noncyclic case. Using $q\geq p+2$, $a\geq2$, $e\geq p^a$, and $d=p^{a-1}e$, it is not hard to verify that all four endpoint values are less than $d(q-1)$ when $s=1$, and less than $d(\alpha-1)+1$ when $s\geq2$.
\end{proof}

\begin{lemma}\label{lem:6.2}
Let $H$ be as in \eqref{eq:6.1}, and suppose that $c_0(H)=d+p-2$. If $q>p$ is prime, $q\nmid pm$, and $s\geq1$, then
\[
 c_0(H\oplus C_{q^s})=dq^s+p-2.
\]
\end{lemma}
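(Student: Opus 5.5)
We propose to prove the upper bound only, since Lemma~\ref{lem:2.3} gives $c_0(H\oplus C_{q^s})\geq m(H\oplus C_{q^s})=dq^s+p-2$. Put $G=H\oplus C_Q$ with $Q=q^s$, and write $\psi:G\to H$ for the projection with kernel $C_Q$. Suppose that $S$ is regular, $|S|=dQ+p-2$ and $\Sigma(S)\neq G$. Since $\mathsf D(G)\leq m(G)\leq |S|$, Lemma~\ref{lem:2.4} applies: every $\Sigma_0(T)$ with $T\mid S$ has trivial stabilizer and $e(T)\geq0$.

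\emph{First step: a short subsequence whose image is a basis of $H$.} Regularity bounds the number of terms of $S$ lying in $C_Q$ by $Q-1$, so $\psi(S)$ has many nonzero terms. Using Lemma~\ref{lem:2.2}, we extract from the nonzero part of $\psi(S)$ a regular subsequence with the same $\Sigma_0$. If $\Sigma_0(\psi(S))=H$, Lemma~\ref{lem:2.10} together with $c_0(H)=d+p-2$ yields $R\mid S$ with $|R|\leq d+p-2$ and $\Sigma_0(\psi(R))=H$. If $\Sigma_0(\psi(S))\neq H$, its stabilizer $J$ is a proper subgroup; we then split $S$ according to $J$ and obtain a bound $|S|\leq c_0(J\oplus C_Q)+c_0(H/J)-2$ plus lower-order terms. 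Lemma~\ref{lem:6.1} is designed precisely to show that this is smaller than $|S|$: the first inequality handles $s=1$ and the second handles $s\geq2$, after the trivial terms have been accounted for. This case is therefore impossible.

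\emph{Second step: growth from the remaining terms.} Let $T=SR^{-1}$, so that $|T|\geq dQ+p-2-(d+p-2)=d(Q-1)$. Every partial sumset $\Sigma_0(RT')$ projects onto $H$. If it ever contains a coset of $C_Q$, adjoining it to a set projecting onto $H$ gives all of $G$, a contradiction. Otherwise, each term of $T$ whose $C_Q$-coordinate generates $C_Q$ adds at least $|H|/\exp(H)=p^{a-1}\cdot p^{b}m/(p^bm)\cdot\ldots$ elements by Lemma~\ref{lem:5.1}, applied with $X=H$ and $q\nmid|H|$. More carefully, it adds $[H:\langle x\rangle]\geq|H|/e=p^a$ elements. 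Terms whose $C_Q$-coordinate is nonzero but not a generator are handled by the stabilizer-free growth of Lemma~\ref{lem:2.5}, or by applying Lemma~\ref{lem:5.1} inside the subgroup $H\oplus qC_Q$. When $s\geq2$, regularity with respect to the subgroup $H\oplus qC_Q$ forces at least about $dQ-dQ/q$ terms to have generating $C_Q$-coordinate, and this is the source of the quantity $\alpha=q^{s-1}(q-p)$ in Lemma~\ref{lem:6.1}.

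\emph{Counting.} Starting from $|\Sigma_0(R)|\geq d+p-1$, or better $|H|=pd$, since the projection is onto $H$, we need roughly $(p-1)\,d\,Q\,/\,p^{a}$ generating terms to exceed $|G|-1=pdQ-1$. The count of available terms is $d(Q-1)$, which dwarfs this requirement. So the contradiction should in fact come easily once the first step is secured, and the final inequality only needs to be checked.

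\emph{Main obstacle.} The hard part is the first step, namely the case $\Sigma_0(\psi(S))\neq H$ or an image that is too concentrated. There one must carefully decompose $S$ into the part over $\psi^{-1}(J)=J\oplus C_Q$ and the rest, and bound each part by $c_0(J\oplus C_Q)-1$ and $c_0(H/J)-1$ via Kneser's theorem. Only then does Lemma~\ref{lem:6.1} close the argument, and the bookkeeping of the terms lying in $C_Q$ is the delicate point.
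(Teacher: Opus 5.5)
Your first step matches the paper's. Splitting $W=IU$ according to $J=\operatorname{st}(\Sigma_0(\psi(W)))$ bounds $|W|\le c_0(J\oplus C_Q)+c_0(H/J)-2$, and Lemma~\ref{lem:6.1} then forces $\Sigma_0(\psi(S))=H$.

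\textbf{The gap is in your second step.} You grow $\Sigma_0(RT')$ from $\Sigma_0(R)$ and claim that if some $\Sigma_0(RT')$ contains a coset of $C_Q$, then ``adjoining it to a set projecting onto $H$ gives all of $G$''. No such set is available. The only subsequence you know has subset sums projecting onto $H$ is $R$, which is already inside $RT'$. Nothing guarantees $\Sigma_0(\psi(S(RT')^{-1}))=H$. A set with trivial stabilizer can contain a full coset of $C_Q$. Once that happens, Lemma~\ref{lem:5.1} no longer applies, your count of $p^a$ new elements per generating term breaks down, and no contradiction follows.

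\textbf{The missing idea is a second, disjoint subsequence whose image is a basis of $H$.} The paper applies the bound above not only to $W=S$, but also to other subsequences; put $c=d+p-2$.
\begin{itemize}
\item For $s=1$, it applies the bound to $W=T=SR^{-1}$. Here $R$ is chosen by Lemma~\ref{lem:2.10} with $Z=S_H$ prescribed. Then $|T|\ge d(q-1)$, and every term of $T$ has nonzero $C_q$-coordinate.
\item For $s\ge2$, it applies the bound to $W=U$, the terms of $T$ whose $C_Q$-coordinate generates $C_Q$. Regularity in $H\oplus qC_Q$ gives $|U|\ge d\alpha+p-1-c=d(\alpha-1)+1$. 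This subgroup has order $pdq^{s-1}$, so the count of generating terms in $S$ is $dQ-pdq^{s-1}+p-1$, not ``about $dQ-dQ/q$''.
\end{itemize}
The thresholds $d(q-1)$ and $d(\alpha-1)+1$ in Lemma~\ref{lem:6.1} are exactly these lengths. The lemma is not there merely to beat $|S|$, as you suggest.

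This yields $\Sigma_0(\psi(T))=H$ (resp.\ $\Sigma_0(\psi(U))=H$). Pick $C\mid T$ (resp.\ $C\mid U$) with $|C|\le c$ and $\Sigma_0(\psi(C))=H$. Then grow $\Sigma_0(T)$ (resp.\ $\Sigma_0(U)$) from $\Sigma_0(C)$ using Lemma~\ref{lem:5.1}. One of two things happens:
\begin{itemize}
\item it contains a coset of $C_Q$, and then $\Sigma_0(R)+\Sigma_0(T)=G$ because $R$ and $T$ are disjoint; or
\item its size exceeds $|H|(Q-1)$, which is impossible for a subset of $G$ containing no coset of $C_Q$.
\end{itemize}
So the real difficulty is not the decomposition by $J$, which is routine. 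It is securing this disjointness, and that is where your proposal fails.
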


\begin{proof}
Put $Q=q^s$, $G=H\oplus C_Q$, and $c=d+p-2$. Lemma~\ref{lem:2.3} gives the lower bound. Suppose that $S$ is regular and
\[
 |S|=dQ+p-2,\qquad \Sigma(S)\ne G.
\]
Since $G\cong C_{p^a}\oplus C_{eQ}$, we have $D(G)=eQ+p^a-1<|S|$. Lemma~\ref{lem:2.4} therefore applies. Let $\pi:G\to H$ be the projection.

First let $W\mid S$ satisfy $\Sigma_0(\pi(W))\ne H$. Put
$J=\operatorname{st}_H(\Sigma_0(\pi(W)))<H$, and write $W=IU$, where $I$ consists of the terms whose $H$-coordinates lie in $J$. The sequence $I$ is regular over $J\oplus C_Q$. If $|I|\geq c_0(J\oplus C_Q)$, then $\Sigma_0(I)=J\oplus C_Q$, contrary to Lemma~\ref{lem:2.4}. The image of $U$ in $H/J$ has no zero terms and has a proper subset-sum set with trivial stabilizer. It is regular by Lemma~\ref{lem:2.9}, so $|U|\leq c_0(H/J)-1$. Consequently,
\begin{equation}\label{eq:6.2}
 |W|\leq c_0(J\oplus C_Q)+c_0(H/J)-2.
\end{equation}
Applying Lemma~\ref{lem:6.1} with $W=S$ now shows that $\Sigma_0(\pi(S))=H$.

Suppose first that $s=1$. Let $Z=S_H$. Since $Z$ is regular over $H$, the inequality $|Z|\geq c$ would give $\Sigma_0(Z)=H$, contrary to Lemma~\ref{lem:2.4}. Thus $|Z|\leq c-1$. Apply Lemma~\ref{lem:2.10} to $\pi(S)$, prescribing the terms of $Z$, to choose $R\mid S$ such that
\[
 Z\mid R,\qquad |R|\leq c,\qquad \Sigma_0(\pi(R))=H.
\]
Put $T=SR^{-1}$. Every term of $T$ has nonzero $C_q$-coordinate, and $|T|\geq d(q-1)$. By \eqref{eq:6.2} and Lemma~\ref{lem:6.1}, $\Sigma_0(\pi(T))=H$. A further application of Lemma~\ref{lem:2.10}, with the empty sequence prescribed, gives $C\mid T$ such that $|C|\leq c$ and $\Sigma_0(\pi(C))=H$.

Suppose that $\Sigma_0(T)$ contains no coset of $C_q$. Starting with $\Sigma_0(C)$, adjoin the remaining terms of $T$. Lemma~\ref{lem:5.1}, together with $|H|/\exp(H)=p^a$, gives
\[
 |\Sigma_0(T)|\geq |H|+p^a(|T|-c)
 \geq |H|+p^a\bigl(d(q-2)-p+2\bigr).
\]
This exceeds $|H|(q-1)$ by
\[
 p^a\bigl((q-2)(d-e)-p+2\bigr)>0,
\]
since $d-e=(p^{a-1}-1)e\geq(p-1)p^2$. This contradicts the absence of a coset of $C_q$. Hence $\Sigma_0(T)$ contains such a coset. Since $\Sigma_0(R)$ projects onto $H$, we get $\Sigma_0(S)=G$.

Now suppose that $s\geq2$, and put $\alpha=q^{s-1}(q-p)$. Choose $R\mid S$ with $|R|\leq c$ and $\Sigma_0(\pi(R))=H$, and put $T=SR^{-1}$. The terms of $S$ whose $C_Q$-coordinates do not generate $C_Q$ lie in the proper subgroup $H\oplus qC_Q$, of order $pdq^{s-1}$. By regularity, at least
\[
 dQ+p-2-(pdq^{s-1}-1)=d\alpha+p-1
\]
terms lie outside this subgroup. Let $U\mid T$ consist of the terms whose $C_Q$-coordinates generate $C_Q$. Then
\[
 |U|\geq d\alpha+p-1-c=d(\alpha-1)+1.
\]
It follows from \eqref{eq:6.2} and Lemma~\ref{lem:6.1} that $\Sigma_0(\pi(U))=H$. Choose $C\mid U$ with $|C|\leq c$ and $\Sigma_0(\pi(C))=H$.

If $\Sigma_0(U)$ contains no coset of $C_Q$, Lemma~\ref{lem:5.1} gives
\[
 |\Sigma_0(U)|\geq |H|+p^a(|U|-c)
 \geq |H|+p^a\bigl(d(\alpha-2)-p+3\bigr).
\]
Subtracting $|H|(Q-1)$ from the last expression yields
\begin{equation}\label{eq:6.3}
 p^a\bigl(e[2-Q+p^{a-1}(\alpha-2)]-p+3\bigr).
\end{equation}
This is positive, since $e\geq p^2$ and
\begin{align*}
 2-Q+p^{a-1}(\alpha-2)
 &\geq q^{s-1}\bigl(p(q-p)-q\bigr)-2p+2\\
 &\geq p^2-2p-2\geq1.
\end{align*}
Here the second inequality follows by taking the smallest possible values $s=2$ and $q=p+2$; the expression increases with $s$ and $q$ in the stated ranges. Thus $|\Sigma_0(U)|>|H|(Q-1)$, a contradiction. Hence $\Sigma_0(U)$, and therefore $\Sigma_0(T)$, contains a coset of $C_Q$. Again $\Sigma_0(\pi(R))=H$ gives $\Sigma_0(S)=G$.

In both cases, Lemma~\ref{lem:2.4} gives $0\in\Sigma(S)$ and hence $\Sigma(S)=G$, which is the desired contradiction.
\end{proof}

\begin{proof}[Proof of Theorem~\ref{thm:1.5}]
Let $n=p^tm$ with $(m,p)=1$, and absorb $p^t$ into $p^b$. It is therefore enough to consider $(n,p)=1$. Factor
\[
 n=\prod_{i=1}^u q_i^{s_i},
\]
where the $q_i>p$ are distinct primes. For $n=1$, the result is known for rank-two $p$-groups; see \cite[Theorem~1.1(5)]{GHHQZ}. Starting with $C_{p^a}\oplus C_{p^b}$, apply Lemma~\ref{lem:6.2} successively to the factors $q_i^{s_i}$. Each new prime is coprime to the group already obtained, so the hypotheses remain satisfied. The desired result follows easily.
\end{proof}

\begin{proof}[Proof of Corollary~\ref{cor:1.6}]
The result is known for cyclic groups, groups of even order, and finite abelian $p$-groups; see \cite{GHHQZ,QuHan2017}. Theorem~\ref{thm:1.2} treats all groups of rank at least three. It remains to consider
\[
 G=C_{n_1}\oplus C_{n_2},\qquad 1<n_1\mid n_2,
\]
where $|G|$ is odd and $G$ is not a $p$-group.

If $n_1$ has a prime divisor $q>p$, then $G$ has a quotient isomorphic to $C_q^2$. Write $|G|=pq^2h$. If $h>1$, then $h\geq p$, and Theorem~\ref{thm:3.3} applies. If $h=1$, the invariant factors give $G\cong C_q\oplus C_{pq}$, which is covered by Theorem~\ref{thm:1.4}.

Otherwise $n_1=p^a$. The case $a=1$ is precisely the family excluded in the statement. If $a\geq2$, write $n_2=p^bn$ with $b\geq a$ and no prime divisor of $n$ smaller than $p$. The conclusion follows from Theorem~\ref{thm:1.5}.
\end{proof}

\section{The groups \texorpdfstring{$C_p\oplus C_{pn}$}{Cp + Cpn}}\label{sec:7}

Let $G=C_p\oplus C_{pn}$, where $n\geq 2$ and $p$ is the smallest
prime divisor of $|G|$. Let
\[
 K=pG\cong C_n,\qquad V=G/K\cong C_p^2,
\]
and let $\pi:G\longrightarrow V$ be the natural homomorphism. Throughout
this section, suppose that $n\geq 2p-1$ and that $S$ is a regular sequence
with
\[
 |S|=pn+2p-3,\qquad \Sigma(S)\ne G.
\]
Since $D(G)=pn+p-1\leq |S|$, Lemma~\ref{lem:2.4} applies.
For each subgroup $M$ of $V$ of order $p$, let $H_M=\pi^{-1}(M)$.
Thus $|H_M|=pn$, and there are $p+1$ such subgroups.

\begin{lemma}\label{lem:7.1}
For each subgroup $M\leq V$ of order $p$, one can choose a subsequence
$T_M\mid S$ of length $p-1$ whose terms lie outside $H_M$, so that the
$p+1$ sequences $T_M$ are pairwise disjoint. After any such choice,
distribute the remaining terms into sequences $A_M$ over $H_M$: each
term outside $K$ has a unique possible assignment, while the terms in
$K$ may be assigned arbitrarily. Then
\[
 \sum_M e(A_M)\leq (p-1)(p-2)-1.
\]
\end{lemma}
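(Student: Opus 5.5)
The plan is to follow the proof of Theorem~\ref{thm:4.1} closely: first obtain the sequences $T_M$ from Lemma~\ref{lem:3.1}, then bound $|\Sigma_0(S)|$ from below using Lemmas~\ref{lem:2.6} and~\ref{lem:2.7} on each block $A_MT_M$, and finally combine the blocks with Kneser's theorem. Write $L=|S|=pn+2p-3$.

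\emph{Existence of the $T_M$.} Fix a subgroup $M$ of order $p$. Since $|H_M|=pn$ and $H_M$ is proper, regularity gives $|S_{H_M}|\le pn-1$. Hence at least $L-(pn-1)=2p-2\ge p-1$ terms of $S$ lie outside $H_M$. For a family of at least two distinct subgroups $M$, their intersection is $\{0\}$, so every term outside $K$ lies outside at least one of the corresponding $H_M$. By regularity in $K$, the number of such terms is at least
\[
 L-(n-1)=(p-1)n+2p-2\ge (p+1)(p-1),
\]
where the last inequality holds comfortably because $n\ge 2p-1$. Thus the hypothesis of Lemma~\ref{lem:3.1} holds with $\ell=p-1$ for the full collection of $p+1$ subgroups, and it yields pairwise disjoint $T_M$.

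\emph{Assignment of the remaining terms.} A term outside $K$ projects to a nonzero element of $V$, which lies in exactly one subgroup $M$ of order $p$. Such a term is therefore placed in $A_M$, which is supported in $H_M$, while terms of $K$ may go anywhere. The lengths satisfy
\[
 \sum_M|A_M|=L-(p^2-1).
\]

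\emph{Counting.} The image of $T_M$ in $G/H_M\cong C_p$ consists of $p-1$ nonzero terms, so by Lemma~\ref{lem:2.6} its subset sums fill $C_p$. Lemma~\ref{lem:2.7} then gives $|\Sigma_0(A_MT_M)|\ge p|\Sigma_0(A_M)|=p(|A_M|+1+e(A_M))$, which rearranges to
\[
 |\Sigma_0(A_MT_M)|-1\ge |A_MT_M|+(p-1)|A_M|+p\,e(A_M).
\]
This also holds when $A_M$ is empty. By Lemma~\ref{lem:2.4}, $\Sigma_0(S)=\sum_M\Sigma_0(A_MT_M)$ has trivial stabilizer, so the second form of Theorem~\ref{thm:2.1} gives
\[
 |\Sigma_0(S)|\ge 1+L+(p-1)(L-p^2+1)+p\sum_M e(A_M).
\]
The main computation is to simplify $1+pL-(p-1)^2(p+1)$. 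Substituting $L=pn+2p-3$ and $(p-1)^2(p+1)=p^3-p^2-p+1$, this equals $|G|-p(p-1)(p-2)$.

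\emph{Conclusion.} Since $\Sigma(S)\ne G$ and $0\in\Sigma(S)$, we have $|\Sigma_0(S)|\le|G|-1$. Therefore
\[
 p\sum_M e(A_M)\le p(p-1)(p-2)-1.
\]
Because the left side is a multiple of $p$, this forces $\sum_M e(A_M)\le (p-1)(p-2)-1$. The argument applies to every choice of the $T_M$ and every distribution of the terms of $K$.

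I expect no real obstacle. The only points needing care are checking the Hall condition for the multi-subgroup families, which uses $n\ge 2p-1$, and getting the arithmetic in the counting step right.
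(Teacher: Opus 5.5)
Your proposal is correct and follows the paper's proof essentially step for step: Lemma~\ref{lem:3.1} (Hall) produces the $T_M$, Lemmas~\ref{lem:2.6} and~\ref{lem:2.7} give the bound on each block $A_MT_M$, Kneser's theorem applies because Lemma~\ref{lem:2.4} gives $\Sigma_0(S)$ trivial stabilizer, and integrality of $\sum_M e(A_M)$ sharpens $p\sum_M e(A_M)\leq p(p-1)(p-2)-1$ to the stated bound. One minor point: the Hall count $(p-1)(n+2)\geq p^2-1$ only needs $n\geq p-1$, so invoking $n\geq 2p-1$ is more than necessary, though harmless.
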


\begin{proof}
For each $M$, regularity gives at least
\[
 |S|-|S_{H_M}|\geq pn+2p-3-(pn-1)=2p-2
\]
terms outside $H_M$. For two or more distinct subgroups $M$, the union
of the available terms consists of all terms outside $K$. Their number
is at least
\[
 |S|-|S_K|\geq pn+2p-3-(n-1)
 =(p-1)n+2p-2\geq p^2-1.
\]
Lemma~\ref{lem:3.1} therefore gives the required disjoint subsequences.

Write $E=\sum_M e(A_M)$. Since $p^2-1$ terms have been removed,
\[
 \sum_M |A_M|=pn-p^2+2p-2.
\]
The image of $T_M$ in $G/H_M\cong C_p$ has subset sums covering that
quotient. Hence, by Lemma~\ref{lem:2.7},
\[
 |\Sigma_0(A_MT_M)|\geq p|\Sigma_0(A_M)|,
\]
and consequently
\[
 |\Sigma_0(A_MT_M)|-1
 \geq |A_MT_M|+(p-1)|A_M|+pe(A_M).
\]
Applying Theorem \ref{thm:2.1} to the $p+1$ disjoint factors $A_MT_M$ of $S$,
we obtain
\begin{align*}
 |\Sigma_0(S)|
 &\geq 1+|S|+(p-1)\sum_M|A_M|+pE\\
 &=|G|-p(p-1)(p-2)+pE.
\end{align*}
Since $\Sigma_0(S)\ne G$ and $E$ is an integer, the assertion follows.
\end{proof}
\begin{lemma}\label{lem:7.2}
Let $Z=S_K$ and $z=|Z|$. Then
\[
 (p-1)z+pe(Z)\leq (p-1)(p-2)-1.
\]
In particular, $z\leq p-3$. Moreover, $z=p-3$ implies $e(Z)=0$, and
$z=p-4$ implies $e(Z)\leq 1$.
\end{lemma}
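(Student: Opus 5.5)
The plan is to apply Lemma~\ref{lem:7.1} with a particular distribution of the terms of $Z=S_K$: put all of $Z$ into a single $A_{M_0}$. Then I compare $e(A_{M_0})$ with $e(Z)$ plus a contribution from the terms of $A_{M_0}$ lying outside $K$.

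\textbf{Choosing $M_0$.} After removing the $p^2-1$ terms of the $T_M$, at least
\[
(p-1)n+2p-2-(p^2-1)
\]
terms outside $K$ remain. Since $n\geq 2p-1$, this is at least $p^2-1\geq (p+1)(p-2)+1$. So some $A_M$ receives at least $p-1$ terms outside $K$; fix such an $M_0$. Assign all of $Z$ to $A_{M_0}$, and let $V_0$ be $p-1$ terms of $A_{M_0}$ outside $K$. Their images are nonzero in $H_{M_0}/K\cong C_p$, so by Lemma~\ref{lem:2.6} the set $\Sigma_0(\pi(V_0))$ covers $H_{M_0}/K$.

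\textbf{Lower bound for $e(A_{M_0})$.} Lemma~\ref{lem:2.7} with the subgroup $K$ gives $|\Sigma_0(ZV_0)|\geq p|\Sigma_0(Z)|$, hence
\[
e(ZV_0)\geq p(|Z|+1+e(Z))-|Z|-(p-1)-1=(p-1)z+pe(Z).
\]
Lemma~\ref{lem:2.4} says every $\Sigma_0(T)$, $T\mid S$, has trivial stabilizer. So repeated use of Lemma~\ref{lem:2.5} while adjoining the remaining terms of $A_{M_0}$ gives
\[
e(A_{M_0})\geq (p-1)z+pe(Z).
\]
The other $e(A_M)$ are nonnegative by Lemma~\ref{lem:2.4}. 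Lemma~\ref{lem:7.1} then yields the main inequality
\[
(p-1)z+pe(Z)\leq (p-1)(p-2)-1.
\]

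\textbf{Consequences.} Since $e(Z)\geq 0$, we get $(p-1)z\leq (p-1)(p-2)-1$, so $z\leq p-3$. If $z=p-3$, then $pe(Z)\leq p-2$, which forces $e(Z)=0$. If $z=p-4$, then $pe(Z)\leq 2p-3$, which forces $e(Z)\leq 1$.

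The main obstacle is the counting step that produces an $A_M$ with $p-1$ terms outside $K$. It needs the hypothesis $n\geq 2p-1$ and the exact number of removed terms. The rest is a direct transcription of the argument in Theorem~\ref{thm:4.1}.
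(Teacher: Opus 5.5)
Your argument is the paper's proof: put all of $Z$ into one $A_{M_0}$ that contains $p-1$ terms outside $K$, use Lemma~\ref{lem:2.7} to get $e(ZV_0)\geq (p-1)z+pe(Z)$, pass to $A_{M_0}$ by Lemma~\ref{lem:2.5}, and conclude from Lemma~\ref{lem:7.1}; your explicit remark that the other $e(A_M)$ are nonnegative is a useful addition. There is one arithmetic slip: with $n\geq 2p-1$ the number of remaining terms outside $K$ is at least $(p-1)(2p-1)+2p-2-(p^2-1)=p(p-1)$, not $p^2-1$, but $p(p-1)>(p+1)(p-2)$ still holds, so the pigeonhole step goes through.
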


\begin{proof}
No term of any $T_M$ lies in $K$. After the sequences $T_M$ have been
removed, at least
\[
 pn+2p-3-(n-1)-(p^2-1)
 =(p-1)n-p^2+2p-1\geq p(p-1)
\]
terms remain outside $K$. Since $p(p-1)>(p+1)(p-2)$, some $H_M$
contains at least $p-1$ of these terms. Choose $p-1$ of them to form a
sequence $Y$, and assign all of $Z$ to the same $A_M$.
The image of $Y$ in $H_M/K\cong C_p$ covers that quotient. Therefore
\[
 |\Sigma_0(ZY)|\geq p|\Sigma_0(Z)|=p(z+1+e(Z)),
\]
which gives
\[
 e(ZY)\geq (p-1)z+pe(Z).
\]
Now $e(A_M)\geq e(ZY)$ by Lemma~\ref{lem:2.5}, so the required
inequality follows from Lemma~\ref{lem:7.1}. Its remaining assertions
follow by substitution.
\end{proof}

Let $P_0,\ldots,P_p$ be the subsequences of $S$ whose images lie in the
$p+1$ respective sets $M\setminus\{0\}$. The next lemma allows us to
specify the terms removed in Lemma~\ref{lem:7.1}.

\begin{lemma}\label{lem:7.3}
Suppose that $R_i\mid P_i$ satisfy
\[
 \sum_{i=0}^p|R_i|=p^2-1,\qquad
 |R_i|\leq p(p-1)\quad(0\leq i\leq p).
\]
Then the terms of $R_0\cdots R_p$ can be distributed among the
sequences $T_M$ in Lemma~\ref{lem:7.1}.
\end{lemma}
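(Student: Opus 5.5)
The plan is to reduce the statement to a direct application of Hall's theorem, in the same way as Lemma~\ref{lem:3.1} is used in the proof of Lemma~\ref{lem:7.1}.

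First, I would describe which slots each term may fill. Let $M_i$ be the subgroup of order $p$ corresponding to $P_i$. Every term of $R_i$ has image in $M_i\setminus\{0\}$. For $j\neq i$ we have $M_i\cap M_j=\{0\}$, so such a term lies outside $H_{M_j}$. Hence a term of $R_i$ may be placed in $T_{M_j}$ for every $j\neq i$, and it is forbidden only from $T_{M_i}$.

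Next, I would form the bipartite problem exactly as in the proof of Lemma~\ref{lem:3.1}. For each $j$ take $p-1$ copies of the set $E_j$ of occurrences in $R_0\cdots R_p$ that do not belong to $R_j$. This gives $(p+1)(p-1)=p^2-1$ sets in total. Hall's condition is checked by looking at which indices $j$ are involved in a subfamily $\mathcal J$ of these sets:
\begin{itemize}
\item If the copies in $\mathcal J$ involve only a single index $j$, then $|\mathcal J|\le p-1$. The union is $E_j$, and
\[
|E_j|=p^2-1-|R_j|\geq p^2-1-p(p-1)=p-1,
\]
which is the only place the hypothesis $|R_j|\le p(p-1)$ is used.
\item If two or more indices are involved, the union $E_j\cup E_{j'}$ already contains every occurrence. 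The union therefore has $p^2-1\geq|\mathcal J|$ elements.
\end{itemize}
Hall's theorem then supplies distinct representatives. These fill every $T_{M_j}$ with $p-1$ terms lying outside $H_{M_j}$, and the resulting sequences are pairwise disjoint.

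Finally, the number of representatives, $p^2-1$, equals the number of terms of $R_0\cdots R_p$. So every term of the $R_i$ is used, and the $T_M$ consist precisely of these terms. Since Lemma~\ref{lem:7.1} asserts its conclusion after \emph{any} admissible choice of the $T_M$, it applies to this choice.

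There is no real obstacle here. The only point needing care is the single-index case of Hall's condition, and that is exactly where the bound $|R_i|\le p(p-1)$ enters.
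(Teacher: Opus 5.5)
Your proof is correct and follows essentially the paper's route: a perfect matching, obtained from Hall's theorem, between the $p^2-1$ slots of the $T_M$ and the $p^2-1$ terms of $R_0\cdots R_p$, with the bound $|R_i|\le p(p-1)$ handling the single-subgroup case. The only difference is cosmetic: the paper checks Hall's condition from the side of the terms (terms from one $P_i$ have $p(p-1)$ available places, and a mixed collection has all $p^2-1$), whereas you check it from the side of the slots as in Lemma~\ref{lem:3.1}; since both sides have size $p^2-1$, either check gives the required bijection.
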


\begin{proof}
Apply Hall's theorem as in Lemma~\ref{lem:3.1}. There are $p-1$ places
for each of the $p+1$ subgroups. A term from $P_i$ can occupy every
place except those for its own subgroup. Thus a collection of terms
from one $P_i$ has $p(p-1)$ available places, whereas a collection
involving at least two different $P_i$ has all $p^2-1$ places available.
The stated inequalities verify Hall's condition.
\end{proof}

\section{Proof of Theorem~\ref{thm:1.7}}\label{sec:8}

Let $n\geq 5$ be odd, let $G=C_3\oplus C_{3n}$, and let
$K=3G\cong C_n$ and $V=G/K\cong C_3^2$.
Suppose that $S$ is regular, $|S|=3n+3$, and $\Sigma(S)\ne G$.
By Lemma~\ref{lem:7.2}, $S_K=1$, so we may write
\[
 S=P_0P_1P_2P_3,
\]
where $P_i$ consists of the terms whose images lie in the $i$th
subgroup of $V$ of order three, with zero omitted. Let $H_i$ be the
inverse image of that subgroup, so that $|H_i|=3n$.

By Lemmas~\ref{lem:7.1} and~\ref{lem:7.3}, every choice of $R_i\mid P_i$
with
\begin{equation}\label{eq:8.1}
 \sum_{i=0}^3|R_i|=8,\qquad |R_i|\leq 6
\end{equation}
satisfies
\begin{equation}\label{eq:8.2}
 \sum_{i=0}^3 e(P_iR_i^{-1})\leq 1.
\end{equation}
We first establish the elementary facts needed to use this inequality.

\begin{lemma}\label{lem:8.1}
Let $A$ be a sequence of nonzero elements of an abelian group of odd
order, and suppose that $\operatorname{st}(\Sigma_0(A))=\{0\}$.
\begin{enumerate}
\item[\textup{(i)}] If $e(A)\geq 2$, then some $B\mid A$ satisfies
 $|B|\leq 3$ and $e(B)\geq 2$.
\item[\textup{(ii)}] If $e(A)=0$, then $\Sigma_0(A)$ is an arithmetic
 progression of size $|A|+1$.
\item[\textup{(iii)}] If $e(A)=1$ and $|A|\geq 3$, then $\Sigma_0(A)$
 is an arithmetic progression of size $|A|+2$.
\end{enumerate}
\end{lemma}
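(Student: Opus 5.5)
The plan is to prove (i)--(iii) together by induction on $|A|$. The basic tool is that every subsequence inherits a trivial stabilizer: for $B\mid A$ we have $\operatorname{st}(\Sigma_0(B))\subseteq\operatorname{st}(\Sigma_0(A))=\{0\}$.

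\emph{Preliminary facts about $e$.}
\begin{enumerate}
\item[(a)] Monotonicity: Lemma~\ref{lem:2.5} gives $e(Bg)\geq e(B)$ for every $B\mid A$ and every term $g$. Hence $e$ is monotone along chains of subsequences, and every $B\mid A$ satisfies $0\leq e(B)\leq e(A)$.
\item[(b)] Superadditivity: Theorem~\ref{thm:2.1}, applied to a factorization $A=B_1\cdots B_k$, gives $e(A)\geq\sum_ie(B_i)$.
\item[(c)] Pairs: for a pair $gh$ of nonzero elements, $\Sigma_0(gh)=\{0,g,h,g+h\}$. Since the order is odd, $g\neq-g$, so $e(gh)=0$ exactly when $h\in\{g,-g\}$, and $e(gh)=1$ otherwise.
\end{enumerate}

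\emph{Part (ii).} Assume $e(A)=0$. By (a) every pair of terms has $e=0$, so by (c) all terms lie in $\{g,-g\}$ for a single $g$. If $A=g^{[u]}(-g)^{[v]}$, then $\Sigma_0(A)=\{-vg,\ldots,ug\}$, an arithmetic progression. It has size $u+v+1=|A|+1$ precisely because $e(A)=0$, which also forces $\operatorname{ord}(g)>|A|$.

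\emph{Part (iii).} Assume $e(A)=1$ and $|A|\geq3$. I would build $\Sigma_0(A)$ term by term and track the increments $\delta=|\Sigma_0(Bg)|-|\Sigma_0(B)|-1\geq0$; these sum to $e(A)=1$. Choose the ordering so that a subsequence $A'$ with $e(A')=0$ of maximal length comes first. By (ii), $\Sigma_0(A')$ is a progression with difference $d$. Every later term outside $\{\pm d\}$ raises $e$ by at least one. So by the choice of $A'$, exactly one term $g$ lies outside $\{\pm d\}$, and $A=A'g$ up to terms $\pm d$. Moreover $e(A')=0$ and $e(A)=1$ force $|\Sigma_0(A')\cup(g+\Sigma_0(A'))|=|\Sigma_0(A')|+2$, so that $g+\Sigma_0(A')$ overlaps the progression $\Sigma_0(A')$ in all but two points.

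The key auxiliary claim is this. If $P$ is a progression of length $k$ with difference $d$ and trivial stabilizer, and $g\notin\{\pm d\}$, then $|P\cup(g+P)|=|P|+2$ forces either $g\in\{\pm2d\}$, or $k$ very small (at most $2$). In the first case $P\cup(g+P)$ is again a progression with difference $d$, and the terms $\pm d$ added afterwards keep it one. The small case is handled directly using $|A|\geq3$: a short computation with a third term, which must be $\pm d$ or is excluded by $e(A)=1$. I expect this case analysis on how $g+P$ can overlap $P$ to be the main obstacle, especially excluding configurations in which $g$ is a different multiple of $d$ wrapping around in $\langle d\rangle$. I would try first to handle these by comparing with $\operatorname{ord}(d)$, using the trivial stabilizer condition.

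\emph{Part (i).} Take a minimal $B\mid A$ with $e(B)\geq2$. Suppose $|B|\geq4$. By minimality and (a), $e(Bg^{-1})=1$ for every term $g$ of $B$, since $e(Bg^{-1})\leq1$ by minimality and $e(Bg^{-1})\geq1$ because removing one term lowers $e$ by at most one. By (iii), $\Sigma_0(Bg^{-1})$ is a progression with difference $d$, and by the structure found in (iii), $Bg^{-1}$ consists of terms $\pm d$ together with at most one term $\pm2d$. Since $|B|\geq4$, we may choose $g$ to be one of the terms $\pm d$. Then adjoining $g$ enlarges the progression by exactly one element, which gives $e(B)=1$, a contradiction. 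Hence $|B|\leq3$, as required.
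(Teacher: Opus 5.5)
Your route differs from the paper's. The paper proves (i) directly by contraposition. Assuming every subsequence of length at most three has $e\le 1$, it fixes terms $x,y$ with $Q=\{0,x,y,x+y\}$ of size four and notes that each further term $z$ gives $|Q\cap(Q+z)|=3$. It then uses the path/three-cycle dichotomy and the equal sums of opposite pairs of $Q$ to show that all terms but one lie in $\{\pm d\}$ and the remaining one lies in $\{\pm 2d\}$; (ii) and (iii) are read off from this. You prove (ii) first, obtain (iii) from a maximal $A'$ with $e(A')=0$ and an overlap count for translates of an interval, and derive (i) last from a minimal counterexample.

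Your (ii) and (iii) go through, and the pieces you leave open are short. For an interval $I$ of length $k$ in $C_N$ and $0<j<N$, one has $|I\cap(I+j)|=\max\{0,k-j\}+\max\{0,j+k-N\}$. So for $k\ge 3$ an overlap of $k-2$ forces $j\in\{2,N-2\}$ unless $k=N-2$, while $g\notin\langle d\rangle$ gives overlap $0$. When $|A'|=1$, three terms no two of which are $\pm$ each other satisfy at most one of $x+y+z=0$, $x=y+z$, $y=x+z$, $z=x+y$, since any two of these force $2w=0$ for some term $w$; hence $|\Sigma_0(xyz)|\ge 7$. The exceptional case $k=N-2$ must be excluded by the trivial stabilizer of $P\cup(g+P)=\Sigma_0(A'g)$, not of $P$. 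For example, with $\operatorname{ord}(d)=7$, $P=\{0,d,\dots,4d\}$ and $g=3d$ give $|P\cup(g+P)|=|P|+2$ although $g\notin\{\pm d,\pm 2d\}$. Your computation is more explicit than the paper's argument on $Q$, but it delivers (iii) rather than (i), so (i) costs an extra step.

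That extra step has a gap. The claim ``removing one term lowers $e$ by at most one'' is false. Lemma~\ref{lem:2.5} only says that $e$ does not decrease when a term is adjoined, and the increase can be large. In $C_7\oplus C_7$ with $d=(1,0)$ and $g=(0,1)$, the sequence $B=d^{[2]}g$ has $|\Sigma_0(B)|=6$ with trivial stabilizer, so $e(B)=2$ while $e(Bg^{-1})=0$; likewise $d^{[k]}g$ has $e=k$. This is easily bypassed, because (ii) gives the same description (terms $\pm d$ plus at most one term $\pm 2d$) when $e(Bg^{-1})=0$.

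Second, ``we may choose $g$ to be one of the terms $\pm d$'' is circular. The element $d$ was defined from $Bg^{-1}$, and once you change the removed term, the progression $\Sigma_0(Bg^{-1})$ may a priori have another difference, which you would still have to compare with the new $g$. Instead, fix one term $g_0$, let $d$ be the difference for $Bg_0^{-1}$, and split on $g_0$. If $g_0=\pm d$, or $g_0=\pm 2d$ and $Bg_0^{-1}$ has no term $\pm 2d$, then $e(B)\le 1$ directly. Otherwise a proper three-term subsequence already has $e\ge 2$ by the overlap count and the trivial stabilizer: either two terms $\pm d$ together with $g_0\notin\{\pm d,\pm 2d\}$, or one term $\pm d$ with two terms $\pm 2d$. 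This contradicts the minimality of $B$.
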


\begin{proof}
Every subsequence of $A$ also has a subset-sum set with trivial
stabilizer. Thus $e(B)\leq e(A)$ for $B\mid A$, by
Lemma~\ref{lem:2.5}. The assertions for $|A|\leq 2$ are immediate.

Suppose that $|A|\geq 3$ and that $e(B)\leq 1$ whenever $B\mid A$
and $|B|\leq 3$. If every two-term subsequence has value zero, then
any two terms $x,y$ satisfy
\[
 |\{0,x,y,x+y\}|=3.
\]
Hence $y=x$ or $y=-x$, and all terms of $A$ belong to
$\{g,-g\}$ for some nonzero $g$. Their subset sums form an interval
in $\langle g\rangle$. This interval cannot fill $\langle g\rangle$,
since its stabilizer is trivial, and therefore it has exactly $|A|+1$
elements.

Otherwise choose two terms $x,y$ for which
\[
 Q=\{0,x,y,x+y\}
\]
has four elements. For every other term $z$, our assumption gives
\[
 |Q\cup(Q+z)|=5,\qquad |Q\cap(Q+z)|=3.
\]
The three edges $u\longrightarrow u+z$ within $Q$ form either one
path of length three, or a cycle of length three and an isolated point.
The latter possibility would put three points of $Q$ in one coset of
a subgroup of order three and the fourth point in a different coset.
This is incompatible with the equality of the sums of the opposite
pairs of $Q$. Consequently
\[
 Q=a+\{0,d,2d,3d\}
\]
for some $d$ of odd order $N\geq 5$. Since $0\in Q$, this progression
lies in $\langle d\rangle$. The case $N=5$ would give
$\Sigma_0(xyz)=\langle d\rangle$, so $N\geq 7$.

It is not hard to verify that a four-term progression in a cyclic group
of odd order at least seven has three points in common with a nonzero
translate only when the translation is $d$ or $-d$. Thus every term
other than $x,y$ belongs to $\{d,-d\}$. The only pairing of the
positions $0,1,2,3$ with equal pair sums modulo $N$ is
$\{0,3\},\{1,2\}$. It follows that one of $x,y$ belongs to
$\{d,-d\}$ and the other to $\{2d,-2d\}$. Therefore all but one
terms of $A$ belong to $\{d,-d\}$, and the remaining term is $2d$ or
$-2d$. The subset sums of the former terms form an interval of at
least three points. Adding the remaining term gives another interval,
of at most $|A|+2$ points. In particular, $e(A)\leq 1$, proving
\textup{(i)} by contraposition.

If $e(A)=0$, monotonicity implies that every pair has value zero, so
the case where every pair has value zero proves \textup{(ii)}. If $e(A)=1$ and $|A|\geq 3$,
some pair has value one; otherwise the first part of the argument would give $e(A)=0$.
The preceding argument then shows that $\Sigma_0(A)$ is a progression.
Its size is $|A|+2$ by definition, proving \textup{(iii)}.
\end{proof}

\begin{lemma}\label{lem:8.2}
Let $\varphi:G\longrightarrow C_3$ be surjective, and let $T$ be a
sequence of five terms with nonzero images. If
$\operatorname{st}(\Sigma_0(T))=\{0\}$, then each coset of
$\ker\varphi$ contains at least two elements of $\Sigma_0(T)$.
\end{lemma}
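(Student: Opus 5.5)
The plan is to split according to the excess $e(T)=|\Sigma_0(T)|-6$ and use Lemma~\ref{lem:8.1}. Every subsequence of $T$ also has a subset-sum set with trivial stabilizer, so $e(T)\geq0$ by Lemma~\ref{lem:2.5}. Write $K_j=\varphi^{-1}(j)$ for $j\in C_3$ and $c_j(X)=|X\cap K_j|$. Since $\varphi$ is a homomorphism, we have $\varphi(\sigma(U))=\sigma(\varphi(U))$ for $U\mid T$. We need $c_j(\Sigma_0(T))\geq2$ for each $j$.

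\emph{Case $e(T)\leq1$.} By Lemma~\ref{lem:8.1}(ii),(iii), $\Sigma_0(T)=\{u+kd:0\leq k\leq N-1\}$ with $N\in\{6,7\}$.
\begin{itemize}
\item Suppose $\varphi(d)=0$. Then the whole progression lies in one coset, and since $0\in\Sigma_0(T)$ this coset is $\ker\varphi$. But $\Sigma_0(T)$ contains the terms of $T$, whose images are nonzero, which is a contradiction.
\item Hence $\varphi(d)\neq0$. The images $\varphi(u)+k\varphi(d)$, $0\leq k\leq N-1$, then run through $C_3$ cyclically.
\item The $N$ points of the progression are distinct, since they form a set of size $N$. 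Any $N\geq6$ consecutive values of $k$ hit each residue class modulo $3$ at least twice, so every coset gets at least two elements.
\end{itemize}
This case is routine.

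\emph{Case $e(T)\geq2$.} By Lemma~\ref{lem:8.1}(i), choose $B\mid T$ with $|B|\leq3$ and $e(B)\geq2$, i.e.\ $|\Sigma_0(B)|\geq|B|+3$. Then adjoin the remaining terms of $T$ one at a time. The key tool is the monotonicity observation: if $g$ has $\varphi(g)=\varepsilon\neq0$, then for every $j$
\[
 c_j(X\cup(X+g))\geq\max\{c_j(X),\,c_{j-\varepsilon}(X)\}.
\]
This holds because translation by $g$ maps $X\cap K_{j-\varepsilon}$ injectively into $K_j$. Iterating, $c_j(\Sigma_0(T))$ is at least the maximum of $c_i(\Sigma_0(B))$ over all $i$ reachable from $j$ by subtracting partial sums of the images of the adjoined terms.

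The sub-steps are:
\begin{enumerate}
\item If $|B|=2$, three terms are adjoined. Consider their images $\varepsilon_1,\varepsilon_2,\varepsilon_3$. If two consecutive ones agree, the partial sums $0,\varepsilon,2\varepsilon$ cover $C_3$. If all consecutive ones differ, the pattern $\varepsilon,-\varepsilon,\varepsilon$ still reaches two residues from every $j$. In the worst case, then, each $c_j(\Sigma_0(T))$ is at least the second largest coset count of $\Sigma_0(B)$.
\item If $|B|=3$, two terms are adjoined. Each $c_j(\Sigma_0(T))$ is then at least the maximum over two residue classes, $j$ and $j-\varepsilon_1$, or over all three classes when the images allow it.
\item It therefore suffices that $\Sigma_0(B)$, which has at least $5$ elements when $|B|=2$ and at least $6$ when $|B|=3$, has at least two cosets containing $\geq2$ elements. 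A set of $|\Sigma_0(B)|\geq5$ points in three cosets could fail this only if some coset contained at least $4$ points.
\item For $|B|=2$ this is impossible: $\Sigma_0(B)=\{0,x,y,x+y\}$ has at most two points in any coset, since $x,y$ have nonzero images.
\item For $|B|=3$, the eight formal subset sums fall by image into classes whose sizes are determined by $(\varphi(x),\varphi(y),\varphi(z))$. I would check by hand that no coset can absorb four distinct sums while another is left with at most one, using that $\Sigma_0(B)$ has trivial stabilizer. In particular, $\{0,x,y,x+y\}$-type coincidences would force an element of order $3$ whose subgroup stabilizes the set.
\end{enumerate}

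\emph{Main obstacle.} I expect the main difficulty to be the bookkeeping in this last case. One has to make sure the adjoined images actually reach a coset that is rich in $\Sigma_0(B)$ from every target coset, including the unlucky alternating patterns of $\pm1$. If the direct count becomes messy, the fallback is to choose $B$ more carefully. Lemma~\ref{lem:8.1}(i) applied to subsequences lets us take $B$ to contain two terms of the same image whenever possible, and then the adjoined terms and $B$ can be ordered so that two consecutive adjoined images coincide, which spreads the counts over all three cosets.
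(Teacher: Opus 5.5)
Your proof is correct and takes a different route from the paper. The paper argues directly. By Lemma~\ref{lem:2.6} every coset of $\ker\varphi$ meets $\Sigma_0(T)$. A case analysis, on which coset might have a unique representative and on the number $b\le 2$ of terms with the minority image, then shows that such uniqueness forces $T$ to be supported on $\{g,-g\}$ with $3g=0$. Hence $\Sigma_0(T)=\langle g\rangle$, contradicting the trivial stabilizer.

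You instead split on $e(T)$. For $e(T)\le 1$ you invoke Lemma~\ref{lem:8.1}(ii),(iii), which applies here since $G$ has odd order. You then read off the coset counts from a progression of length $6$ or $7$ whose difference has nonzero image; this is fine. For $e(T)\ge 2$ you use the translation inequality $c_j(X\cup(X+g))\ge\max\{c_j(X),c_{j-\varepsilon}(X)\}$. The paper's argument is self-contained and exhibits the extremal configuration. Yours is shorter but leans on Lemma~\ref{lem:8.1}, which is the more delicate lemma.

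There are some loose ends in your second case.
\begin{enumerate}
\item The branch $|B|=2$ is vacuous, since then $|\Sigma_0(B)|\le 4<|B|+3$. Your claim that $\Sigma_0(B)$ has at least five elements in that branch is impossible. So $|B|=3$ and $|\Sigma_0(B)|\ge 6$.
\item Step (v), which you leave as a hand check, is immediate and needs no stabilizer argument. Among the eight formal subset sums of three terms with nonzero images, each coset of $\ker\varphi$ receives at most three. The image counts are $2,3,3$ if the three images agree and $3,3,2$ otherwise. So two cosets with at most one point each would give $|\Sigma_0(B)|\le 5$.
\item Step (ii) is too pessimistic. The images of any two terms with nonzero images have subset sums covering $C_3$ (Lemma~\ref{lem:2.6}), so every coset is reachable from every other.
\end{enumerate}

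Once you see the last point, you need neither the split on $e(T)$ nor Lemma~\ref{lem:8.1}. Take any three terms $B$ of $T$, and let $g_1,g_2$ be the other two. Lemma~\ref{lem:2.5}, applied along $B$ (every subsequence of $T$ has trivial stabilizer), gives $|\Sigma_0(B)|\ge 4$. So some coset $K_i$ contains two points of $\Sigma_0(B)$. Translating them by suitable elements of $\Sigma_0(g_1g_2)$ places two points of $\Sigma_0(T)$ in every coset. This three-line argument is shorter than both yours and the paper's.
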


\begin{proof}
Write the two nonzero images as $1$ and $-1$, choosing the sign so that
at least three terms have image $1$. Let $b\in\{0,1,2\}$ be the number
with image $-1$. Every quotient element is represented by a subset sum,
by Lemma~\ref{lem:2.6}. We show that none has only one representative.

Suppose first that the only subset sum with image zero is $0$. If both
signs occur, every term with image $-1$ is the negative of every term
with image $1$. Thus all positive-image terms equal some $g$, and
three such terms give $3g=0$. If all five images are $1$, comparison
of overlapping triples gives the same conclusion.

Suppose next that there is only one subset sum with image $1$. All
terms with that image equal some $g$. If $b=0$, comparing a one-term
sum with a four-term sum gives $g=4g$. If $b=1$, write $h$ for the
remaining term. Comparing $g$ with $2g+h$ gives $h=-g$, and again
$4g=g$. If $b=2$, the same comparison gives $h_1=h_2=-g$; their sum
then gives $-2g=g$.

Finally, suppose that there is only one subset sum with image $-1$.
Comparison of the sums of two positive-image terms shows that all
positive-image terms equal some $g$. Every negative-image term equals
$2g$. If $b=0$, compare $2g$ with $5g$. If $b=1$, compare $h$ with
$3g+h$. If $b=2$, compare $h_1$ with $g+h_1+h_2$.

In every case, all terms belong to $\{g,-g\}$ with $3g=0$.
Since $\varphi(g)\ne 0$, this gives
$\Sigma_0(T)=\langle g\rangle$ with nontrivial stabilizer, a
contradiction.
\end{proof}

\begin{lemma}\label{lem:8.3}
For the four subsequences $P_i$ defined above,
\[
 \sum_{i=0}^3e(P_i)\leq 1.
\]
\end{lemma}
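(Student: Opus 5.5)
The plan is to argue by contradiction, using inequality \eqref{eq:8.2} with a choice of the removed subsequences $R_i$ that keeps enough terms to preserve the excess. Suppose $\sum_{i=0}^3 e(P_i)\geq 2$. By Lemma~\ref{lem:2.4}, every subsequence of $S$ has a subset-sum set with trivial stabilizer and nonnegative excess. Lemma~\ref{lem:2.5} then shows that $e$ is monotone along subsequences: if $B\mid A\mid S$, then $e(A)\geq e(B)$. It therefore suffices to find small \emph{witness} subsequences $B_i\mid P_i$ with $\sum_i e(B_i)\geq 2$, and then to choose $R_i\mid P_iB_i^{-1}$ satisfying \eqref{eq:8.1}. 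This gives $\sum_i e(P_iR_i^{-1})\geq\sum_i e(B_i)\geq 2$, contradicting \eqref{eq:8.2}.

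\emph{Witnesses.} There are two cases.
\begin{enumerate}
\item[\textup{(a)}] Some $e(P_i)\geq 2$. Lemma~\ref{lem:8.1}(i) applies, since the terms of $P_i$ are nonzero, $|G|$ is odd and the stabilizer is trivial. It gives $B_i\mid P_i$ with $|B_i|\leq 3$ and $e(B_i)\geq 2$. Take all other $B_j$ empty.
\item[\textup{(b)}] Two indices $i\neq j$ have $e(P_i)=e(P_j)=1$. I claim each such $P_i$ contains a subsequence of length at most $2$ with excess $1$. If $|P_i|\leq 2$, take $B_i=P_i$. If $|P_i|\geq 3$, the proof of Lemma~\ref{lem:8.1}(ii),(iii) shows the following: if every two-term subsequence had excess $0$, then $\Sigma_0(P_i)$ would be an interval of size $|P_i|+1$, so $e(P_i)=0$. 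Hence some pair $xy\mid P_i$ has $|\{0,x,y,x+y\}|=4$, that is, $e(xy)=1$. Take $B_i,B_j$ to be these pairs.
\end{enumerate}
In either case the witnesses use at most $4$ terms in total.

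\emph{Choosing the $R_i$.} We must remove exactly $8$ terms from $\prod_i P_iB_i^{-1}$, with at most $6$ from each $P_i$. The total supply is $|S|-4\geq 3n-1\geq 14$, since $n\geq 5$. Regularity in $H_i$ gives $|P_i|\leq 3n-1$, so the other three $P_j$ together have at least $4$ terms. At most $3$ of these can be witness terms, and in case (b) the witnesses lie in the two sequences with excess $1$. A quick count then shows that $\sum_i\min\{6,|P_iB_i^{-1}|\}\geq 8$, so a distribution satisfying \eqref{eq:8.1} exists. By Lemmas~\ref{lem:7.1} and~\ref{lem:7.3}, \eqref{eq:8.2} applies to this choice, which yields the contradiction.

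\emph{Main obstacle.} The hardest step should be this last counting: verifying that the cap $|R_i|\leq 6$ together with the reserved witness terms still leaves $8$ removable terms. This needs care in degenerate distributions, where one $P_i$ is huge and the others are tiny and partly occupied by witnesses. I would handle it by a short case check, using that the complement of the largest $P_i$ has at least $4$ terms. Then either the largest $P_i$ supplies $6$ and the others supply $2$, or the terms are spread out enough that the cap is irrelevant.
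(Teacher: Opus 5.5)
Your witness strategy is the one the paper uses, and case (a) works. It needs the sharper bound $|P_i|\le 3n-2$, which follows from Lemma~\ref{lem:2.4}, since $|P_i|\ge 3n-1$ would force $\Sigma_0(P_i)=H_i$. With your regularity bound $3n-1$, the complement of the largest $P_k$ may have only $4$ terms, three of them witnesses, and your count gives $6+1=7$.

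The genuine gap is in case (b): the claim $\sum_i\min\{6,|P_iB_i^{-1}|\}\ge 8$ is false, and no refinement of the counting repairs it. Take $|P_0|=3n-2$, $|P_1|=|P_2|=2$, $|P_3|=1$, with $e(P_1)=e(P_2)=1$. The witnesses are forced to be $P_1$ and $P_2$, so the capped supply is $6+0+0+1=7$. Moreover, \eqref{eq:8.2} is consistent with this configuration for every admissible choice in \eqref{eq:8.1}:
\begin{itemize}
\item at most $6$ terms come from $P_0$, so at least two of the five small terms are removed, which leaves $\sum_{i\ge 1}e(P_iR_i^{-1})\le 1$;
\item $e(P_0R_0^{-1})\le e(P_0)=0$, because $\Sigma_0(P_0)$ is a proper subset of $H_0$ of size at least $3n-1$.
\end{itemize}
So the degenerate distribution you flagged as the main obstacle cannot be ruled out using \eqref{eq:8.2} alone.

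You are missing a direct proof that $\Sigma_0(S)=G$ in this situation, which is how the paper closes the case. When the selection fails, some $P_j$ whose index is not one of the two excess-one indices has length $3n-2$. Then $e(P_j)=0$ and $\Sigma_0(P_j)=H_j\setminus\{h\}$ for a single $h$. The remaining five terms of $S$ lie outside $H_j$, so their images in $G/H_j\cong C_3$ are nonzero. By Lemma~\ref{lem:8.2}, their subset sums meet each coset of $H_j$ in at least two distinct elements $u\ne u'$. The translates $u+(H_j\setminus\{h\})$ and $u'+(H_j\setminus\{h\})$ miss different points, so together they cover $u+H_j$. Hence $\Sigma_0(S)=G$, contradicting $\Sigma(S)\ne G$. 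Your argument needs this step, or an equivalent structural input, to be complete.
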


\begin{proof}
Suppose that $e(P_i)\geq 2$ for some $i$. By Lemma~\ref{lem:8.1},
there is a subsequence $B\mid P_i$ of length three with $e(B)\geq 2$.
If eight terms satisfying~\eqref{eq:8.1} can be chosen outside $B$,
then $B$ remains in $P_iR_i^{-1}$, contradicting~\eqref{eq:8.2}.

Otherwise, let $a_j$ be the number of terms remaining in $P_j$ after
$B$ is removed. Then
\[
 \sum_j a_j=3n,\qquad \sum_j\min\{6,a_j\}\leq 7.
\]
Some $a_j$ is at least six. All other $a_k$ together are then at most
one, and hence $a_j\geq 3n-1$. It follows that
\[
 |\Sigma_0(P_j)|\geq |P_j|+1\geq 3n=|H_j|,
\]
so $\Sigma_0(P_j)=H_j$, contrary to Lemma~\ref{lem:2.4}.
Thus $e(P_i)\leq 1$ for every $i$.

If two different $P_i$ have value one, each contains a two-term
subsequence with value one, by the proof of Lemma~\ref{lem:8.1}.
Keep those four terms. If eight terms satisfying~\eqref{eq:8.1} can
be removed from the rest, the two pairs contradict~\eqref{eq:8.2}.
If not, the preceding calculation, with residual total $3n-1$, gives
some $P_j$ of length at least $3n-2$. Since
$\Sigma_0(P_j)\subseteq H_j$ has trivial stabilizer, we must have
\[
 |P_j|=3n-2,\qquad e(P_j)=0,\qquad
 \Sigma_0(P_j)=H_j\setminus\{h\}
\]
for some $h\in H_j$. The remaining five terms of $S$ lie outside
$H_j$. By Lemma~\ref{lem:8.2}, their subset sums have two distinct
elements in each $H_j$-coset. The corresponding two translates of
$H_j\setminus\{h\}$ fill that coset. Hence $\Sigma_0(S)=G$, again
a contradiction.
\end{proof}

\begin{lemma}\label{lem:8.4}
Let $N$ be odd, let $2\leq k\leq 4$, and let $A_1,\ldots,A_k$ be
nonempty arithmetic progressions in $C_N$. Suppose that the differences
of those progressions having at least two elements generate $C_N$.
If
\[
 \sum_{i=1}^k|A_i|\geq N+k-1,
\]
then $A_1+\cdots+A_k=C_N$.
\end{lemma}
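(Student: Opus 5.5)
Assume, for contradiction, that $A_1+\cdots+A_k\neq C_N$, and apply Kneser's theorem (Theorem~\ref{thm:2.1}) to the sumset. Let $H=\operatorname{st}(A_1+\cdots+A_k)$. The plan is to show first that $H$ must be trivial, and then that Kneser's bound gives a contradiction.

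\emph{Case $H=\{0\}$.} Kneser gives
\[
|A_1+\cdots+A_k|\geq 1+\sum(|A_i|-1)\geq 1+N+k-1-k=N,
\]
so the sumset is all of $C_N$. This contradicts the assumption directly.

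\emph{Case $H$ a nontrivial proper subgroup.} Let $|H|=h$ and $[C_N:H]=M$, so $h\geq 3$ since $N$ is odd, and $M>1$. Project the progressions to $C_N/H\cong C_M$.

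\begin{itemize}
\item Each $A_i$ is a progression $a_i+\{0,d_i,\dots,(|A_i|-1)d_i\}$. Its image $\bar A_i$ is again a progression, with difference $\bar d_i$, and its size is $\min\{|A_i|,\operatorname{ord}(\bar d_i)\}$.
\item Kneser gives
\[
|A_1+\cdots+A_k|=h\,|\bar A_1+\cdots+\bar A_k|,\qquad |\bar A_1+\cdots+\bar A_k|\geq \sum|\bar A_i|-(k-1),
\]
and since the sumset is proper, $\sum|\bar A_i|\leq M+k-2$.
\item To compare $|A_i|$ with $|\bar A_i|$: a progression of length $\ell$ whose image has length $\ell'$ satisfies $\ell\leq \ell' h$.
\end{itemize}

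This per-progression bound loses too much when $k$ is large, so I would use the finer count $|A_i+H|=h|\bar A_i|$ and argue as follows.

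\begin{itemize}
\item If some $\bar d_i$ has $\operatorname{ord}(\bar d_i)<|A_i|$, then $\bar A_i$ is a full coset of $\langle \bar d_i\rangle$.
\item Since the differences generate $C_N$, the $\bar d_i$ generate $C_M$.
\item Because the sumset has stabilizer exactly $H$, no such full cyclic piece can make the quotient sumset periodic. Otherwise the stabilizer would be strictly larger than $H$, contradicting Kneser's choice of $H$.
\end{itemize}

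Concretely, the sumset's image $\bar A_1+\cdots+\bar A_k$ has trivial stabilizer in $C_M$. So any $\bar A_i$ equal to a coset of a nontrivial subgroup $\langle \bar d_i\rangle$ forces $\langle \bar d_i\rangle$ into that stabilizer, which is impossible. Hence every $\bar A_i$ with $|A_i|\geq 2$ has $|\bar A_i|=|A_i|$ whenever $|A_i|\leq \operatorname{ord}(\bar d_i)$. The alternative, $|A_i|>\operatorname{ord}(\bar d_i)$, makes $\bar A_i$ the whole coset and is excluded. Thus $|\bar A_i|=|A_i|$ for all $i$.

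It follows that
\[
\sum|\bar A_i|=\sum|A_i|\geq N+k-1\geq 3M+k-1>M+k-2,
\]
which contradicts the quotient bound.

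\emph{Case $H=C_N$.} Then the sumset is $C_N$, which contradicts the assumption immediately.

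The main obstacle is the middle step, controlling how the progressions wrap modulo $H$. I expect the stabilizer argument to handle it, since a full-coset image would enlarge the stabilizer. The step to check carefully is that the stabilizer of the image sumset is trivial in $C_M$. This holds because $\operatorname{st}(X+H)/H=\operatorname{st}(\bar X)$ for $X$ a union of $H$-cosets.

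The generation hypothesis is used only to exclude degenerate situations, such as all progressions lying in cosets of a proper subgroup; in the Kneser count above it is not even needed. The bound $k\leq 4$ likewise appears to play no role, so I would double-check whether the intended proof uses these hypotheses differently. If the stabilizer argument failed, I would instead fall back on a direct Cauchy--Davenport-type induction on $k$, using Lemma~\ref{lem:8.1}-style progression structure.
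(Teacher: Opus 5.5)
\textbf{There is a genuine gap in the case of nontrivial $H$.} Your conclusion ``$|\bar A_i|=|A_i|$ for all $i$'' is false. The stabilizer argument only rules out $\bar A_i$ being a coset of a \emph{nontrivial} subgroup $\langle\bar d_i\rangle$. If a progression with at least two elements has its difference $d_i$ in $H$, then $\bar d_i=0$. Its image $\bar A_i$ is then a singleton, which is a coset of the trivial subgroup and does not enlarge the stabilizer. Yet $A_i$ itself can have up to $h$ elements.

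Your suspicion that a proof using neither the generation hypothesis nor $k\le 4$ must be wrong is correct, because the statement fails without them. In $C_9$, take $A_1=\cdots=A_4=\{0,3,6\}$. The total size is $12=N+k-1$, but the sum is $\{0,3,6\}$. With $k=5$ and $A_5=\{0,1\}$ added, the differences generate $C_9$ and the total size is $14\ge 13$. Still, the sum $\{0,1,3,4,6,7\}$ is proper. In both examples $H=\{0,3,6\}$ and $|\bar A_i|=1<3=|A_i|$ for the progressions with difference in $H$.

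\textbf{The missing idea is to split the progressions into two classes.} Let $r$ be the number of progressions with at least two elements whose difference is not in $H$, and let $t=[C_N:H]$.
\begin{enumerate}
\item For these $r$ progressions your argument is correct, so $|\bar A_i|=|A_i|$. The other images are singletons, so Kneser's theorem in $C_N/H$, with trivial stabilizer and a proper sumset, gives that their total size is at most $t+r-2$.
\item Each of the remaining $s=k-r$ progressions lies in a single $H$-coset, so it has at most $h$ elements.
\item Generation gives $r\ge 1$, since otherwise every relevant difference would lie in $H\ne C_N$. Hence $s\le k-1\le 3$, and $k\le 4$ is used exactly here. Since $N$ is odd, $t\ge 3$, so $s\le t$.
\end{enumerate}
Therefore $\sum_i|A_i|\le t+r-2+sh$, while
\[
 N+k-1-(t+r-2+sh)=(h-1)(t-s)+1>0,
\]
which contradicts $\sum_i|A_i|\ge N+k-1$. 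This is the paper's argument, and it is precisely where the two hypotheses you could not locate are needed.
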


\begin{proof}
Suppose that the sum is proper, and let
$H=\operatorname{st}(A_1+\cdots+A_k)$. If $H=\{0\}$, Theorem \ref{thm:2.1}
immediately gives a contradiction. Otherwise, writing
\[
 h=|H|,\qquad t=[C_N:H],
\]
we have $h,t\geq 3$.

Let $r$ be the number of progressions having at least two elements
whose differences do not belong to $H$. The sum of the images in
$C_N/H$ has trivial stabilizer. For each of these $r$ progressions,
its size is smaller than the order of its difference in the quotient.
Indeed, otherwise its image would be a coset of a nontrivial subgroup,
which would stabilize the quotient sumset. Its image therefore has
the same size as the progression itself. Each of the remaining
$s=k-r$ progressions has a singleton image and contains at most $h$
elements.

Theorem \ref{thm:2.1} shows that 
\[
 \sum_{\substack{i\text{ among the}\\r\text{ progressions}}}|A_i|
 \leq t+r-2.
\]
The generating assumption gives $r\geq 1$, so $s\leq k-1\leq 3\leq t$.
Consequently
\[
 \sum_{i=1}^k|A_i|\leq t+r-2+sh<N+k-1,
\]
because the difference between the last two bounds is
\[
 N+k-1-(t+r-2+sh)=(h-1)(t-s)+1>0.
\]
This is the required contradiction.
\end{proof}

By Lemmas~\ref{lem:8.1} and~\ref{lem:8.3}, every $\Sigma_0(P_i)$ is
an arithmetic progression, except possibly when $|P_i|=2$ and
$e(P_i)=1$. When all four sumsets are progressions, write
\begin{equation}\label{eq:8.3}
 \Sigma_0(P_i)=a_i+\{0,g_i,\ldots,\ell_i g_i\},\qquad
 \ell_i=|P_i|+e(P_i).
\end{equation}
For an empty $P_i$, take $a_i=0$, $\ell_i=0$, and choose any $g_i$
whose image is nonzero in the assigned subgroup of $V$.
For a nonempty $P_i$, consecutive points of the progression show that
$g_i\in H_i$. Since the progression contains zero and a term outside
$K$, its difference is not in $K$. Thus $g_i+K$ is nonzero in the
assigned subgroup in every case. Moreover, the trivial stabilizer gives
\begin{equation}\label{eq:8.4}
 \ell_i\leq \operatorname{ord}(g_i)-2.
\end{equation}
Put $d_i=3g_i\in K$. Since $g_i+K$ has order three,
\[
 \operatorname{ord}(g_i)=3\operatorname{ord}(d_i).
\]

\begin{lemma}\label{lem:8.5}
If all four sumsets have the form~\eqref{eq:8.3}, then the elements
$d_i$ with $\ell_i\geq 5$ generate $K$.
\end{lemma}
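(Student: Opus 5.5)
The plan is to argue by contradiction. Suppose the elements $d_i$ with $\ell_i\geq 5$ lie in a proper subgroup $K'<K$, and put $t=[K:K']\geq 3$ (recall $n$ is odd). Let
\[
 G'=\{x\in G: 3x\in K'\},
\]
which is a proper subgroup of $G$ of order $9n/t$ containing $K'$. Call $P_i$ \emph{large} if $\ell_i\geq 5$ and \emph{small} otherwise.

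The first step is to show that every large $P_i$ lies in $G'$. By \eqref{eq:8.3}, $\Sigma_0(P_i)$ is a progression with difference $g_i$ that contains $0$, so $\Sigma_0(P_i)\subseteq\langle g_i\rangle$. In particular every term of $P_i$ lies in $\langle g_i\rangle$. Since $3g_i=d_i\in K'$, we get $g_i\in G'$.

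For the small $P_j$ we have $|P_j|\leq\ell_j\leq 4$. If all four $P_i$ were small, then $|S|\leq 16<3n+3$, which is impossible. Hence at most three are small, they contain at most $12$ terms in total, and the large ones contain at least $3n-9$ terms, all in $G'$.

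The first case is $t\geq 5$. Then $|G'|\leq 9n/5$. Regularity gives $|S_{G'}|\leq 9n/5-1$, while $|S_{G'}|\geq 3n-9$. This is a contradiction once $n\geq 7$. For $n=5$ one has $t=5$, so $K'=\{0\}$, and then each large $P_i$ has $\ell_i\leq\operatorname{ord}(g_i)-2=1$ by \eqref{eq:8.4}. Thus no $P_i$ is large, which contradicts the count above. I would double-check the small values of $n$ against \eqref{eq:8.4} in the same way; in general \eqref{eq:8.4} gives the sharper bound $\ell_i\leq 3n/t-2$ for large $P_i$.

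The second case, $t=3$, is the main obstacle, since then $|G'|=3n$ and the crude count no longer works. Let $W$ be the product of all terms of $S$ lying in $G'$, including any terms of small $P_j$ that lie in $G'$. By Lemma~\ref{lem:2.4}, $\Sigma_0(W)\subsetneq G'$ has trivial stabilizer, so
\[
 |W|+1\leq|\Sigma_0(W)|\leq 3n-1,
\]
and at least four terms of $S$ lie outside $G'$. Since at least $3n-9$ terms lie in $G'$, the excess satisfies $e(W)\leq 8$ and $\Sigma_0(W)$ misses at most about ten points of $G'$. I would try first to sharpen this using \eqref{eq:8.2} and Lemma~\ref{lem:8.3}: the total excess is at most $1$, so $\sum_i\ell_i\leq 3n+4$. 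The aim is to force $\Sigma_0(W)$ to miss at most one point of $G'$, so that $|W|\geq 3n-2$. Then I would mimic the end of Lemma~\ref{lem:8.3}. The remaining terms, at most five and all outside $G'$, have nonzero image in $G/G'\cong C_3$. Using Lemma~\ref{lem:8.2}, or Lemma~\ref{lem:2.6} together with a direct count when fewer terms are available, each coset of $G'$ contains at least two of their subset sums. The corresponding two translates of $G'\setminus\{h\}$ cover that coset, so $\Sigma_0(S)=G$, a contradiction.

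If the sharpening fails, the fallback is to apply Theorem~\ref{thm:2.1} directly to $\Sigma_0(W)+\Sigma_0(SW^{-1})$. Its stabilizer is trivial, and Lemma~\ref{lem:2.7} forces $\Sigma_0(SW^{-1})$ to meet all three cosets of $G'$, which should give $|\Sigma_0(S)|\geq|G|$. Handling the leftover slack from the small $P_j$ lying inside $G'$ is the delicate point.
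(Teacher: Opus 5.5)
Your reduction and the case $t\geq 5$ are fine. The inclusion $P_i\subseteq\langle g_i\rangle\subseteq G'$ is correct, and using regularity of $G'$ (plus \eqref{eq:8.4} when $n=5$) is a harmless variant of the paper's direct bound $\ell_i\leq 3n/5-2$.

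The genuine gap is the case $t=3$, which you leave open. The ``sharpening'' to $|W|\geq 3n-2$ is not argued. The Kneser fallback also does not reach the goal as sketched. With trivial stabilizer, $\Sigma_0(W)+\Sigma_0(SW^{-1})$ only yields $|\Sigma_0(S)|\geq |S|+1=3n+4$. Lemma~\ref{lem:2.7} only yields $|\Sigma_0(S)|\geq 3|\Sigma_0(W)|\geq 3(3n-8)$. Both fall short of $|G|=9n$, and the ``leftover slack'' you mention is exactly what remains unresolved. Pursuing subset-sum bounds here is the wrong direction: the configuration is excluded by a structural observation plus a count.

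When $t=3$, $K'$ must be $3K$, the unique subgroup of index three in the cyclic group $K$. Hence $G'=\{x:3x\in 3K\}$ contains $K$ and has order $3n$, so $G'=H_j$ for one of the four subgroups $H_j$. Concretely, with $G=\langle e\rangle\oplus\langle f\rangle$, one has $G'=\langle e\rangle\oplus\langle 3f\rangle$.

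The terms of $P_i$ have images in the $i$th order-three subgroup of $V$ with zero removed, and distinct such subgroups meet only in $0$. So a nonempty $P_i$ can lie in $G'$ only if $i=j$; thus there is at most one index with $\ell_i\geq 5$.

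For that index, your own inclusion $P_j\subseteq\langle g_j\rangle$ gives $|\langle g_j\rangle|=3\operatorname{ord}(d_j)\leq 3|K'|=n$. Then \eqref{eq:8.4} gives $\ell_j\leq n-2$. Hence $3n+3\leq\sum_i\ell_i\leq (n-2)+12$, a contradiction. This is the paper's argument.

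Note that regularity of $G'=H_j$ alone, which only gives $|P_j|\leq 3n-1$, would not suffice. You need the sharper bound coming from $\langle g_j\rangle$, together with the fact that only one $P_i$ can sit inside $G'$.
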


\begin{proof}
Let $D$ be the subgroup they generate, and suppose that $D<K$.
If $[K:D]\ne 3$, then $[K:D]\geq 5$. For every index with
$\ell_i\geq 5$, inequality~\eqref{eq:8.4} gives
$\ell_i\leq 3n/5-2$; all other $\ell_i$ are at most four. Hence
\[
 \sum_{i=0}^3\ell_i\leq\max\{16,12n/5-8\}<3n+3,
\]
contrary to $\sum_i\ell_i=|S|+\sum_i e(P_i)\geq 3n+3$.

Suppose that $[K:D]=3$. Then $3\mid n$, $n\geq 9$, and $D=3K$.
Write
\[
 G=\langle e\rangle\oplus\langle f\rangle,\qquad
 \operatorname{ord}(e)=3,\quad \operatorname{ord}(f)=3n.
\]
For $g=ae+bf$, the condition $3g\in 3K$ is equivalent to
$b\equiv 0\pmod 3$. Thus every $g_i$ with $\ell_i\geq 5$ has image
in the subgroup generated by $e+K$. There is at most one such index,
and its parameter is at most $3|D|-2=n-2$. Therefore
\[
 \sum_{i=0}^3\ell_i\leq n-2+12=n+10<3n+3,
\]
again a contradiction.
\end{proof}

\begin{proof}[Proof of Theorem~\ref{thm:1.7}]
The lower bound follows from Lemma~\ref{lem:2.3}. The case $n=3$
follows from the known result for finite abelian $3$-groups
\cite{QuHan2017}. Let $n\geq 5$ be odd, and suppose that a regular
sequence $S$ of length $3n+3$ is not an additive basis. The preceding
results apply.

First suppose that all four sumsets have the form~\eqref{eq:8.3}, and put
\[
 \varepsilon=\sum_{i=0}^3e(P_i)\in\{0,1\},\qquad
 \sum_{i=0}^3\ell_i=3n+3+\varepsilon.
\]
Every $\ell_i$ is at most $3n-2$, by~\eqref{eq:8.4}, so at least two
of them are at least two.

Fix a target element of $V$, and subtract the image of $\sum_i a_i$
to obtain $x\in V$. Write $v_i=g_i+K$. For $r\in\{0,1,2\}$, let
$s_i(r)$ be the number of integers in $[0,\ell_i]$ congruent to $r$
modulo three. The corresponding part of the progression, when
nonempty, is a translate of a progression in $K$ with difference $d_i$
and size $s_i(r)$, and
\[
 s_i(0)+s_i(1)+s_i(2)=\ell_i+1.
\]
If $\ell_i\geq 2$, all three parts are nonempty. If $\ell_i\leq 1$,
every nonempty part is a singleton. If $\ell_i\geq 5$, every part has
at least two elements. Thus, whenever parts from all indices with
$\ell_i\geq 5$ are selected, their nonsingleton differences generate
$K$ by Lemma~\ref{lem:8.5}.

Suppose first that all four $\ell_i$ are at least two. The equation
\[
 r_0v_0+r_1v_1+r_2v_2+r_3v_3=x,\qquad r_i\in\{0,1,2\},
\]
has nine solutions. Each coefficient takes each value exactly three
times, since any two distinct $v_i$ generate $V$. The average of
$\sum_i s_i(r_i)$ over these solutions is
\[
 \frac13\sum_{i=0}^3(\ell_i+1)
 =\frac{3n+7+\varepsilon}{3}>n+2.
\]
Some solution therefore selects four nonempty progressions whose total
size is at least $n+3$. Lemma~\ref{lem:8.4} shows that their sum fills
the chosen coset of $K$.

Suppose next that exactly three $\ell_i$ are at least two. Fix a
nonempty part of the remaining progression, whose parameter is
$t\leq 1$. The coefficient equation in the other three directions has
three solutions, and each coefficient takes each value once. The
average total size of these three selected parts is
\[
 \frac{3n+6+\varepsilon-t}{3}>n+1.
\]
One solution therefore has total size at least $n+2$. All indices with
$\ell_i\geq 5$ occur among these three, so Lemma~\ref{lem:8.4} again
fills the chosen coset.

It remains to consider exactly two indices with $\ell_i\geq 2$, say
$i=0,1$. Put
\[
 m_i=\ell_i+1,\qquad q_i=\left\lfloor\frac{m_i}{3}\right\rfloor
 \quad(i=0,1),\qquad t=\ell_2+\ell_3\leq 2.
\]
Then
\[
 m_0+m_1=3n+5+\varepsilon-t\geq 3n+3,
 \qquad q_0+q_1\geq n.
\]
After nonempty parts of the two small progressions are fixed, the two
coefficients $r_0,r_1$ are uniquely determined. If $q_0+q_1\geq n+1$,
the selected parts already have total size at least $n+1$.

Suppose that $q_0+q_1=n$. The remainders $m_i-3q_i$ have sum at
least three, so one of them is two. Also $t\geq 1$, since otherwise
their sum would be at least five, whereas each remainder is at most
two. In the progression with remainder two, two parts have size
$q_i+1$ and only one has size $q_i$. Choose $j\in\{2,3\}$ with
$\ell_j=1$, and vary its coefficient between zero and one while
keeping the other small coefficient fixed. Both $r_0$ and $r_1$
change, since $v_j$ belongs to neither of their subgroups. At least
one choice therefore selects a part of size $q_i+1$ in the progression
with remainder two. The two large parts then have total size at least
$n+1$.

In both alternatives the two selected parts are nonempty, and their
nonsingleton differences include all $d_i$ with $\ell_i\geq 5$.
Lemma~\ref{lem:8.4} fills the chosen coset. Since the target was
arbitrary, this proves $\Sigma_0(S)=G$ when all four sumsets are
progressions.

Finally, suppose that one $P_i$ consists of two terms, has $e(P_i)=1$,
and its sumset is not a progression. The other three sequences have
value zero and total length $3n+1$. Write their sumsets in the
form~\eqref{eq:8.3}; for them $\ell_i=|P_i|$.
Their elements $d_i$ with $\ell_i\geq 5$ generate $K$. Indeed, if the
generated subgroup had index at least five, the argument of
Lemma~\ref{lem:8.5} would give
\[
 \sum_i\ell_i\leq\max\{12,9n/5-6\}<3n+1.
\]
If its index were three, at most one of the three parameters could be
at least five, giving
\[
 \sum_i\ell_i\leq n-2+8=n+6<3n+1.
\]
Both are impossible.

At least two of these three parameters are at least two; otherwise
their total would be at most $(3n-2)+2=3n$. If all three are at least
two, choose the empty subsequence from the exceptional sequence.
For each target in $V$, the other three coefficients have three
solutions and each takes each value once. Their average total size is
\[
 \frac{(3n+1)+3}{3}=n+\frac43.
\]
Some solution has total size at least $n+2$, so Lemma~\ref{lem:8.4}
fills the target coset.

If just two progression parameters are at least two, fix a nonempty
part of the third, whose parameter is $t\leq 1$. The projected subset
sums of the exceptional two-term sequence cover its subgroup of order
three. Choose one actual subset sum for each of those three images.
As the image varies, the uniquely determined coefficients in the two
remaining directions each take every value once. The average total
size of their parts is
\[
 \frac{3n+3-t}{3}>n.
\]
Thus one choice gives two nonempty progressions of total size at least
$n+1$. Their nonsingleton differences include all the generating $d_i$
with $\ell_i\geq 5$. Lemma~\ref{lem:8.4} again fills the target coset.
This proves $\Sigma_0(S)=G$ in the last case, completing the desired
contradiction.
\end{proof}

\begin{proof}[Proof of Corollary~\ref{cor:1.8}]
By Corollary~\ref{cor:1.6}, it remains to consider
$G=C_3\oplus C_{3n}$, where $n$ is odd. For $n=1$, the result is
Theorem \ref{thm:2.11}; for $n\geq 3$, it follows from
Theorem~\ref{thm:1.7}.
\end{proof}

\medskip
\noindent\textbf{Statement of AI.}
ChatGPT was used to assist with routine calculations and checks, language
editing, and reorganization of the manuscript. The authors are completely responsible for the proofs and statements.

\medskip
\noindent\textbf{Acknowledgment.} This work was supported by Guangdong
Basic and Applied Basic Research Foundation Grant No.~2024A1515012564.


\begin{thebibliography}{99}

\bibitem{Davenport1935}
H.~Davenport, \href{https://doi.org/10.1112/jlms/s1-10.37.30}{\emph{On the addition of residue classes}},
J. London Math. Soc. \textbf{10} (1935), 30--32.


\bibitem{FreezeGaoGeroldinger}
M.~Freeze, W.~Gao, and A.~Geroldinger,
\emph{The critical number of finite abelian groups},
J. Number Theory \textbf{129} (2009), 2766--2777;
Corrigendum, ibid. \textbf{152} (2015), 205--207.

\bibitem{GaoHamidoune}
W.~Gao and Y.~O.~Hamidoune, \emph{On additive bases},
Acta Arith. \textbf{88} (1999), 233--237.

\bibitem{GHHQZ}
W.~Gao, D.~Han, G.~Qian, Y.~Qu, and H.~Zhang,
\href{https://doi.org/10.4064/aa168-3-3}{\emph{On additive bases II}},
Acta Arith. \textbf{168} (2015), 247--267.


\bibitem{GLQW}
W.~Gao, Y.~Li, Y.~Qu, and Q.~Wang,
\emph{Additive bases of finite abelian groups of rank $2$},
Integers \textbf{22} (2022), Paper A37, 15 pp.

\bibitem{GPZ2013}
W.~Gao, J.~Peng, and Q.~Zhong,
\href{https://doi.org/10.4064/aa158-3-6}{\emph{A quantitative aspect of non-unique factorizations: the Narkiewicz constants III}},
Acta Arith. \textbf{158} (2013), 271--285.


\bibitem{GQZ}
W.~Gao, Y.~Qu, and H.~Zhang,
\href{https://doi.org/10.4064/aa181106-22-4}{\emph{On additive bases III}},
Acta Arith. \textbf{193} (2020), 293--308.

\bibitem{GeroldingerHalterKoch2006}
		A.~Geroldinger and F.~Halter-Koch,
		\emph{Non-Unique Factorizations: Algebraic, Combinatorial and Analytic Theory},
		Pure and Applied Mathematics, vol.~278, Chapman \& Hall/CRC, Boca Raton, 2006.

\bibitem{Grynkiewicz}
D.~J.~Grynkiewicz, \emph{Structural Additive Theory},
Developments in Mathematics 30, Springer, New York, 2013.

\bibitem{Hall}
P.~Hall, \href{https://doi.org/10.1112/jlms/s1-10.37.26}{\emph{On representatives of subsets}},
J. London Math. Soc. \textbf{10} (1935), 26--30.

\bibitem{HanRen}
D.~Han and Y.~Ren, \emph{When Do Restricted Sumsets Cover a Finite Abelian Group?} \href{https://arxiv.org/abs/2610.14496v1}{arXiv:2610.14496v1} (2026).

\bibitem{HanZhang}
D.~Han and H.~Zhang, \emph{Davenport Constants of Finite Abelian Groups}, \href{https://arxiv.org/abs/2610.15102v1}{arXiv:2610.15102v1} (2026).


\bibitem{Kneser}
M.~Kneser, \href{https://doi.org/10.1007/BF01174162}{\emph{Absch\"atzung der asymptotischen Dichte von Summenmengen}},
Math. Z. \textbf{58} (1953), 459--484.


\bibitem{Nathanson}
M.~B.~Nathanson,
\emph{Additive Number Theory: Inverse Problems and the Geometry of Sumsets},
Graduate Texts in Mathematics 165, Springer, New York, 1996.

\bibitem{PengI}
C.~Peng, \emph{Addition theorems in elementary abelian groups I},
J. Number Theory \textbf{27} (1987), 46--57.

\bibitem{PengII}
C.~Peng, \emph{Addition theorems in elementary abelian groups II},
J. Number Theory \textbf{27} (1987), 58--62.

\bibitem{PengQuLi}
J.~Peng, Y.~Qu, and Y.~Li,
\href{https://doi.org/10.1007/s11464-020-0869-2}{\emph{Inverse problems associated with subsequence sums in $C_p\oplus C_p$}},
Front. Math. China \textbf{15} (2020), 985--1000.


\bibitem{QuGao}
Y.~Qu and W.~Gao, \href{https://doi.org/10.4064/aa230323-10-4}{\emph{On additive bases of finite groups}},
Acta Arith. \textbf{215} (2024), 97--113.


\bibitem{QuHan2016}
Y.~Qu and D.~Han, \emph{An inverse theorem for additive bases},
Int. J. Number Theory \textbf{12} (2016), 1509--1518.

\bibitem{QuHan2017}
Y.~Qu and D.~Han, \href{https://doi.org/10.1142/S1793042117501342}{\emph{Additive bases of $C_p\oplus C_{p^n}$}},
Int. J. Number Theory \textbf{13} (2017), 2453--2459.


\bibitem{QuLi}
Y.~Qu and Y.~Li, \href{https://doi.org/10.4064/cm8515-6-2021}{\emph{Additive bases of $C_3\oplus C_{3q}$}},
Colloq. Math. \textbf{169} (2022), 189--195.


\bibitem{QLWZ}
Y.~Qu, Y.~Li, Q.~Wang, and X.~Zhao,
\href{https://doi.org/10.1007/s00373-025-02895-9}{\emph{A note on additive bases of abelian groups of rank $2$}},
Graphs Combin. \textbf{41} (2025), Article 24.


\bibitem{Yang}
W.~Yang, \emph{Additive bases and the zero-sum invariant $\mathsf N(G)$ for groups $C_p\oplus C_{np}$},
\href{https://arxiv.org/abs/2610.13103v1}{arXiv:2610.13103v1} (2026).

\end{thebibliography}
\end{document}